\documentclass[12pt]{article}
\usepackage[margin=1in]{geometry}  % set the margins to 1in on all sides
\usepackage{graphicx}              % to include figures
\usepackage{amsmath}               % great math stuff
\usepackage{amsthm}                % better theorem environments
\usepackage[ansinew]{inputenc}
\usepackage{array,color}
\usepackage{amsxtra}
\usepackage{amstext}
\usepackage{latexsym}
\usepackage{dsfont}
\usepackage[all]{xy}
\usepackage{amsmath,amsfonts,amssymb}

\usepackage{marginnote}

\usepackage{cite}

\usepackage{hyperref}
\hypersetup{
colorlinks   = true,
citecolor    = black,
linkcolor =black,
urlcolor=black,
}

\hypersetup{hidelinks}

\usepackage{authblk}

\setlength{\affilsep}{1em}

\newcommand{\email}[1]{%
    \normalsize\href{mailto:#1}{\color{black}{#1} }}

%\usepackage[backref]{hyperref}
%\usepackage{cite}
%\usepackage{refcheck}
%\usepackage{showkeys}
%\usepackage{hyperref}

%\usepackage{ifpdf}
%\ifpdf
%  \usepackage[colorlinks=true,linkcolor=blue,citecolor=red, final,backref=page,hyperindex]{hyperref}
%\else
%  \usepackage[colorlinks,final,backref=page,hyperindex,hypertex]{hyperref}
%\fi
%\usepackage{cite}

\allowdisplaybreaks

\makeatletter
\newcommand{\subjclass}[2][2020]{%
  \let\@oldtitle\@title%
  \gdef\@title{\@oldtitle\footnotetext{#1 \emph{Mathematics subject classification}: #2}}%
}
\newcommand{\keywords}[1]{%
  \let\@@oldtitle\@title%
  \gdef\@title{\@@oldtitle\footnotetext{\emph{Keywords}: #1.}}%
}
\makeatother

\newtheorem{thm}{Theorem}[section]
\newtheorem{cor}[thm]{Corollary}

\newtheorem{prop}[thm]{Proposition}
\theoremstyle{definition}
\newtheorem{defn}[thm]{Definition}
\theoremstyle{remark}
\newtheorem{rmk}[thm]{Remark}
\theoremstyle{remark}
\newtheorem{rems}[thm]{Remarks}
\newtheorem{ex}[thm]{Example}
\newtheorem{exes}[thm]{Examples}
\numberwithin{equation}{section}

\title{Constructions of BiHom-X algebras and bimodules of some BiHom-dialgebras}

\author[1,2]{Ismail Laraiedh}
\author[3]{Sergei Silvestrov}
\affil[1]{\Affilfont Departement of Mathematics, Faculty of Sciences,
\authorcr \Affilfont Sfax University, Box 1171, 3000 Sfax, Tunisia}
\affil[2]{Departement of Mathematics,
\authorcr \Affilfont College of Sciences and Humanities Al Quwaiiyah,
\authorcr \Affilfont Shaqra University, Kingdom of Saudi Arabia
\authorcr \Affilfont
\email{ismail.laraiedh@gmail.com}, \email{ismail.laraiedh@su.edu.sa}}
\affil[3]{\Affilfont Division of Mathematics and Physics,
\authorcr \Affilfont School of Education, Culture and Communication,
\authorcr \Affilfont M\"{a}lardalen University, Box 883, 72123 V{\"a}ster{\aa}s, Sweden
\authorcr \Affilfont
\email{sergei.silvestrov@mdh.se}}

%\author{Ismail Laraiedh$^{1,2}$,\hspace{0.2cm}
%Sergei Silvestrov$^{3}$ \\
%\small{$^{1}$Departement of Mathematics, Faculty of Sciences, Sfax University, Box 1171, 3000 Sfax, Tunisia,\\ $^{2}$Departement of Mathematics, College of Sciences and Humanities Al Quwaiiyah, \authorcr Shaqra University, Saudi Arabia. %\authorcr
%ismail.laraiedh@gmail.com, ismail.laraiedh@su.edu.sa\authorcr
%$^{3}$ Division of Mathematics and Physics, School of Education, Culture and Communication, M\"{a}lardalen University, Box 883, 72123 V{\"a}ster{\aa}s, Sweden. %\authorcr
%sergei.silvestrov@mdh.se}}
%
%\renewcommand{\thefootnote}{\fnsymbol{footnote}}

\subjclass[2020]{17B61, 17D30, 17B70}
\keywords{BiHom-X algebra, Bimodule, Matched pair}

\date{}

\begin{document}
\maketitle

\abstract{
The aim of this paper is to introduce and to give some constructions results of BiHom-X algebras by using Yau's twisting, Rota Baxter and Some elements of centroids. Next, we define the bimodules of BiHom-left symmetric dialgebras, BiHom-associative dialgebras and BiHom-tridendriform algebras. A sequence of this kind of bimodules can be constructed. Their matched pairs are also introduced and related relevant properties are given.
}

\footnote[0]{{\it Corresponding authors}:
Ismail Laraiedh, Sergei Silvestrov}

\section{Introduction}
The theory of Hom-algebras has been initiated in \cite{HartwigLarSil:defLiesigmaderiv, LarssonSilvJA2005:QuasiHomLieCentExt2cocyid,LarssonSilv:quasiLiealg} motivated by quasi-deformations of Lie algebras of vector fields, in particular q-deformations of Witt and Virasoro algebras. Hom-Lie algebras and more general quasi-Hom-Lie algebras were introduced first by Hartwig, Larsson and Silvestrov in  \cite{HartwigLarSil:defLiesigmaderiv} where a general approach to discretization of Lie algebras of vector fields using general twisted derivations ($\sigma$-deriva\-tions) and a general method for construction of deformations of Witt and Virasoro type algebras based on twisted derivations have been developed. The general quasi-Lie algebras, containing the quasi-Hom-Lie algebras and Hom-Lie algebras as subclasses, as well their graded color generalization, the color quasi-Lie algebras including color quasi-hom-Lie algebras, color hom-Lie algebras and their special subclasses the quasi-Hom-Lie superalgebras and hom-Lie superalgebras, have been first introduced in \cite{HartwigLarSil:defLiesigmaderiv,LarssonSilvJA2005:QuasiHomLieCentExt2cocyid,LarssonSilv:quasiLiealg,LSGradedquasiLiealg,LarssonSilv:quasidefsl2,SigSilv:CzechJP2006:GradedquasiLiealgWitt}.
Subsequently, various classes of Hom-Lie admissible algebras have been considered in \cite{ms:homstructure}. In particular, in \cite{ms:homstructure}, the Hom-associative algebras have been introduced and shown to be Hom-Lie admissible, that is leading to Hom-Lie algebras using commutator map as new product, and in this sense constituting a natural generalization of associative algebras as Lie admissible algebras leading to Lie algebras using commutator map. Furthermore, in \cite{ms:homstructure}, more general $G$-Hom-associative algebras including Hom-associative algebras, Hom-Vinberg algebras (Hom-left symmetric algebras), Hom-pre-Lie algebras (Hom-right symmetric algebras), and some other Hom-algebra structures, generalizing $G$-associative algebras, Vinberg and pre-Lie algebras respectively, have been introduced and shown to be Hom-Lie admissible, meaning that for these classes of Hom-algebras, the operation of taking commutator leads to Hom-Lie algebras as well. Also, flexible Hom-algebras have been introduced, connections to Hom-algebra generalizations of derivations and of adjoint maps have been noticed, and some low-dimensional Hom-Lie algebras have been described.
In Hom-algebra structures, defining algebra identities are twisted by linear maps.
Since the pioneering works \cite{HartwigLarSil:defLiesigmaderiv,LarssonSilvJA2005:QuasiHomLieCentExt2cocyid,
LarssonSilv:quasiLiealg,LSGradedquasiLiealg,LarssonSilv:quasidefsl2,ms:homstructure}, Hom-algebra structures have developed in a popular broad area with increasing number of publications in various directions.
Hom-algebra structures include their classical counterparts and open new broad possibilities for deformations, extensions to Hom-algebra structures of representations, homology, cohomology and formal deformations, Hom-modules and hom-bimodules, Hom-Lie admissible Hom-coalgebras, Hom-coalgebras, Hom-bialgebras, Hom-Hopf algebras, $L$-modules, $L$-comodules and Hom-Lie quasi-bialgebras, $n$-ary generalizations of BiHom-Lie algebras and BiHom-associative algebras and generalized derivations, Rota-Baxter operators, Hom-dendriform color algebras, Rota-Baxter bisystems and covariant bialgebras, Rota-Baxter cosystems, coquasitriangular mixed bialgebras, coassociative Yang-Baxter pairs, coassociative Yang-Baxter equation and generalizations of Rota-Baxter systems and algebras, curved $\mathcal{O}$-operator systems and their connections with tridendriform systems and pre-Lie algebras, BiHom-algebras, BiHom-Frobenius algebras and double constructions, infinitesimal BiHom-bialgebras and Hom-dendriform $D$-bialgebras, Hom-algebras have been considered
\cite{AbdaouiMabroukMakhlouf,
AmmarEjbehiMakhlouf:homdeformation,
AttanLaraiedh:2020ConstrBihomalternBihomJordan,
Bakayoko:LaplacehomLiequasibialg,
Bakayoko:LmodcomodhomLiequasibialg,
BakBan:bimodrotbaxt,
BakyokoSilvestrov:HomleftsymHomdendicolorYauTwi,
BakyokoSilvestrov:MultiplicnHomLiecoloralg,
BenMakh:Hombiliform,
BenAbdeljElhamdKaygorMakhl201920GenDernBiHomLiealg,
CaenGoyv:MonHomHopf,
ChtiouiMabroukMakhlouf1,
ChtiouiMabroukMakhlouf2,
DassoundoSilvestrov2021:NearlyHomass,
EbrahimiFardGuo08,
GrMakMenPan:Bihom1,
HassanzadehShapiroSutlu:CyclichomolHomasal,
HounkonnouDassoundo:centersymalgbialg,
HounkonnouHoundedjiSilvestrov:DoubleconstrbiHomFrobalg,
HounkonnouDassoundo:homcensymalgbialg,
kms:narygenBiHomLieBiHomassalgebras2020,
Laraiedh1:2021:BimodmtchdprsBihomprepois,
LarssonSigSilvJGLTA2008,
LarssonSilvJA2005:QuasiHomLieCentExt2cocyid,
LarssonSilv:quasidefsl2,
LarssonSigSilvJGLTA2008:QuasiLiedefFttN,
LarssonSilvestrovGLTMPBSpr2009:GenNComplTwistDer,
MaMakhSil:CurvedOoperatorSyst,
MaMakhSil:RotaBaxbisyscovbialg,
MaMakhSil:RotaBaxCosyCoquasitriMixBial,
MaZheng:RotaBaxtMonoidalHomAlg,
MabroukNcibSilvestrov2020:GenDerRotaBaxterOpsnaryHomNambuSuperalgs,
Makhl:HomaltHomJord,
Makhlouf2010:ParadigmnonassHomalgHomsuper,
MakhloufHomdemdoformRotaBaxterHomalg2011,
MakhSil:HomHopf,
MakhSilv:HomDeform,
MakhSilv:HomAlgHomCoalg,
MakYau:RotaBaxterHomLieadmis,
RichardSilvestrovJA2008,
RichardSilvestrovGLTbnd2009,
SaadaouSilvestrov:lmgderivationsBiHomLiealgebras,
ShengBai:homLiebialg,
Sheng:homrep,
SigSilv:GLTbdSpringer2009,
SilvestrovParadigmQLieQhomLie2007,
SilvestrovZardeh2021:HNNextinvolmultHomLiealg,
QSunHomPrealtBialg,
Yau:ModuleHomalg,
Yau:HomEnv,
Yau:HomHom,
Yau:HombialgcomoduleHomalg,
Yau:HomYangBaHomLiequasitribial,
YauHomMalcevHomalternHomJord}.

In this paper we introduce and give some results on constructions of
BiHom-X algebras using Yau's twisting, Rota Baxter and some elements of centroids.
The bimodules of BiHom-left symmetric dialgebras, BiHom-associative dialgebras and BiHom-tridendriform algebra are defined, and it is shown that a sequence of this kind of bimodules can be constructed. Their matched pairs are also introduced and related relevant properties are given.
In Section \ref{sec:Yaustwistinggeneralization}, we provide some results on constructions of BiHom-X algebras. Section \ref{sec:homleftsymcolordialg} contains definitions and some key results about bimodules of BiHom-associative algebras and BiHom-left-symmetric algebras, and matched pairs of BiHom-left symmetric and BiHom-associative dialgebras. In section \ref{sec:homtridendriformcoloralgebras}, devoted to bimodules of BiHom-tridendriform algebras, definitions and some constructions of BiHom-dendriform and 
BiHom-tridendriform algebras and the concepts of bimodules and matched pairs of BiHom-tridendriform algebra are investigated.

\section{Constructions of BiHom-X algebras}
\label{sec:Yaustwistinggeneralization}
Throughout this paper, all graded vector spaces are assumed to be over a field $\mathbb{K}$ of characteristic different from $2$.

In this section, we provide some results on constructions of BiHom-X algebras.
\begin{defn}
A BiHom-algebra is a $(n+3)$-tuple $(A, \mu_1, \dots, \mu_n,\alpha,\beta)$ in which $A$ is a linear space,
$\mu_i : A\otimes A \rightarrow A$ $(i=1, \dots, n)$ are bilinear maps,
 and $\alpha,\beta : A\rightarrow A$ are linear maps, called the twisting maps.
 If in addition,  $$\alpha\circ\mu_i=\mu_i\circ(\alpha\otimes \alpha),~\beta\circ\mu_i=\mu_i\circ(\beta\otimes \beta), \quad (i=1, \dots, n),$$
 the BiHom-algebra $(A, \mu_1, \dots, \mu_n,\alpha,\beta)$ is said to be multiplicative.
 \end{defn}
\begin{defn}
Let $(A, \mu_1, \dots, \mu_n,\alpha,\beta)$ be a BiHom-algebra. Then
\begin{enumerate}
\item
A BiHom-subalgebra of $(A, \mu_1, \dots, \mu_n,\alpha,\beta)$ is a linear subspace $H$ of $A$, which is closed for the multiplication $\mu_i$ $(i=1, \dots, n)$, and invariant by $\alpha$ and $\beta$, that is, $\mu_i(x,y)\in H,~\alpha(x)\in H$ and $\beta(x)\in H$ for all $x,y\in H$. If furthermore $\mu_i(x,y)\in H$ and $\mu_i(y,x)\in H$ for all $(x,y)\in A\times H,$ then $H$ is called a two-sided BiHom-ideal of $A$.
  \item $(A, \mu_1, \dots, \mu_n,\alpha,\beta)$ is said to be regular if $\alpha$ and $\beta$ are algebra automorphisms.
  \item $(A, \mu_1, \dots, \mu_n,\alpha,\beta)$ is said to be involutive if $\alpha$ and $\beta$ are two involutions,  that is $\alpha^2=\beta^2=id$.
\end{enumerate}
 \end{defn}

\begin{defn}
Let $(A, \mu_1, \dots, \mu_n,\alpha,\beta)$ and $(A', \mu'_1, \dots, \mu'_n,\alpha',\beta')$ be two BiHom- algebras. Then
a linear map $f:A\longrightarrow A^{'}$ is said to be a BiHom-algebras morphism if the following conditions hold:
$$\begin{array}{llll}&&f  \circ \mu_i= \mu_i^{'} \circ(f \otimes f),~\forall i=1,\dots,n,\\&&f\circ \alpha=\alpha^{'} \circ f,\\&&f\circ \beta=\beta^{'}\circ  f,\end{array} $$
as illustrated, respectively,  by the following commutative diagrams:
  $$
\xymatrix{
A\otimes A \ar[d]_{f\otimes f }\ar[rr]^{\mu_i}
               && A  \ar[d]^{f}  \\
A'\otimes A' \ar[rr]^{\mu'_i}
               && A' },\quad \xymatrix{
 A \ar[d]_{ f }\ar[rr]^{\alpha}
               && A  \ar[d]^{f}  \\
 A' \ar[rr]^{\alpha'}
              && A'  },\quad \xymatrix{
A \ar[d]_{ f }\ar[rr]^{\beta}
              && A  \ar[d]^{f}  \\
A' \ar[rr]^{\beta'}
             && A',  }
$$
for all $i=1,\dots,n$.
\end{defn}
Denote by $\Gamma_{f}=\{x+f(x);~~x\in A\}\subset A\oplus A'$  the graph of a linear map $f:A\longrightarrow A^{'}$.

\begin{defn}
A BiHom-algebra $(A, \mu_1, \dots, \mu_n,\alpha,\beta)$ is called a BiHom-X algebra if the axioms defining the structure of $X$ are linear combination of the terms
of the form
$\mu_j\circ(\mu_i\otimes \beta)$ or $\mu_j\circ(\alpha\otimes\mu_i)$.
\end{defn}

\begin{prop}\label{bylv}
Let $(A,\mu_1^{A}, \dots, \mu_n^{A},\alpha_{1},\beta_{1})$ and $(B,\mu_1^{B}, \dots, \mu_n^{B},\alpha_{2},\beta_{2})$ be BiHom-X algebras. Then, there is a BiHom-X algebra $(A\oplus B, \mu_1^{A\oplus B},\dots, \mu_n^{A\oplus B}, \alpha,\beta),$ where for all $i=1,\dots,n,$ the bilinear maps $\mu_{i}^{A\oplus B}:(A\oplus B)^{\times 2}\longrightarrow (A\oplus B)$ are given by
 $$
 \mu_{i}^{A\oplus B}(a_1+b_1,a_2+b_2):=\mu_{i}^A(a_1,a_2)+\mu_{i}^B (b_1,b_2), \forall \ a_1,a_2\in A,\ \forall \ b_1,b_2\in B,
$$
 and the linear  maps $\alpha$ and $\beta$ are given, for all $(a,b)\in A\times B$, by
$$ \begin{array}{lll}
\alpha(a+b)&:=& \alpha_{1}(a)+\alpha_{2}(b),\\
\beta(a+b)&:=& \beta_{1}(a)+\beta_{2}(b).
 \end{array}$$
 \end{prop}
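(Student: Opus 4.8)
The plan is to verify directly that the direct sum $(A\oplus B,\mu_1^{A\oplus B},\dots,\mu_n^{A\oplus B},\alpha,\beta)$ satisfies the BiHom-X axioms, by reducing every computation to the coordinate pieces in $A$ and in $B$. First I would check multiplicativity: since $\alpha(a+b)=\alpha_1(a)+\alpha_2(b)$ and similarly for $\beta$, and since the multiplications act componentwise, one computes $\alpha\circ\mu_i^{A\oplus B}(a_1+b_1,a_2+b_2)=\alpha_1(\mu_i^A(a_1,a_2))+\alpha_2(\mu_i^B(b_1,b_2))$, which by multiplicativity of the two given BiHom-X algebras equals $\mu_i^{A\oplus B}(\alpha(a_1+b_1),\alpha(a_2+b_2))$; the same argument works for $\beta$. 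This also shows $\alpha$ and $\beta$ are well-defined linear endomorphisms of $A\oplus B$.

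Next I would observe the key structural fact that makes the X-axioms transfer: because the multiplication and the twisting maps are all block-diagonal with respect to the decomposition $A\oplus B$, any composite of the form $\mu_j^{A\oplus B}\circ(\mu_i^{A\oplus B}\otimes\beta)$ or $\mu_j^{A\oplus B}\circ(\alpha\otimes\mu_i^{A\oplus B})$, evaluated on elements $a_1+b_1,\ a_2+b_2,\ a_3+b_3$, splits as the corresponding composite in $A$ applied to $(a_1,a_2,a_3)$ plus the corresponding composite in $B$ applied to $(b_1,b_2,b_3)$. By the definition of a BiHom-X algebra, the defining identities of $X$ are linear combinations of exactly such terms; hence the $A\oplus B$-version of any such identity, evaluated on arbitrary elements, is the sum of the $A$-version (which vanishes because $A$ is a BiHom-X algebra) and the $B$-version (which vanishes because $B$ is). Therefore all defining identities of $X$ hold in $A\oplus B$.

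I would then conclude that $(A\oplus B,\mu_1^{A\oplus B},\dots,\mu_n^{A\oplus B},\alpha,\beta)$ is a BiHom-X algebra, which is the assertion. The only mildly delicate point is being careful that the X-axioms really are, by hypothesis, linear combinations of the two allowed types of ternary composites and contain no other terms (e.g. no bare $\mu_i$ applied to $\alpha\otimes\beta$ mismatches or quadruple products); given the definition of BiHom-X algebra in the excerpt this is exactly what is assumed, so the componentwise splitting is the whole content of the proof. I do not expect a genuine obstacle here — the argument is a routine "everything is block-diagonal, so identities are checked blockwise" verification — but the bookkeeping of writing out one representative X-axiom and exhibiting the split is the step that carries the weight, and that is what I would include in the written-out proof.
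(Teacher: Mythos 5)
Your proposal is correct and follows essentially the same route as the paper: the paper's proof consists precisely of computing that the two composite types $\mu_j^{A\oplus B}\circ(\mu_i^{A\oplus B}\otimes\beta)$ and $\mu_j^{A\oplus B}\circ(\alpha\otimes\mu_i^{A\oplus B})$ split componentwise into the corresponding composites in $A$ and in $B$, from which the X-identities (being linear combinations of such terms) follow blockwise. Your additional check of multiplicativity is harmless but not required, since the proposition does not assert that the direct sum is multiplicative.
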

\begin{proof}
For any $a_1,b_1,c_1\in A$, $a_2,b_2,c_2\in B$ and $1\leq i,j \leq n$,
\begin{multline*}
\mu_{i}^{A\oplus B}(\mu_{j}^{A\oplus B}(a_1+a_2,b_1+b_2),\beta(c_1+c_2))= \\
\mu_{i}^{A\oplus B}(\mu_{j}^{A}(a_1,b_1)+\mu_{j}^{B}(a_2,b_2),\beta_1(c_1)+\beta_2(c_2))\\
=\mu_{i}^{A}(\mu_{j}^{A}(a_1,b_1),\beta_1(c_1))+\mu_{i}^{B}(\mu_{j}^{B}(a_2,b_2),\beta_2(c_2))
\end{multline*}
Similarly,
\begin{align*}
\mu_{i}^{A\oplus B}(\alpha(a_1+a_2),&\mu_{j}^{A\oplus B}(b_1+b_2,c_1+c_2))= \\ &\mu_{i}^{A}(\alpha_1(a_1),\mu_{j}^{A}(b_1,c_1))+\mu_{i}^{B}(\alpha_2(a_2),\mu_{j}^{B}(b_2,c_2)).
\qedhere
\end{align*}
\end{proof}

\begin{prop}
Let $(A,\mu_{1}^A,\dots,\mu_{n}^A,\alpha_{1},\beta_{1})$ and $(B,\mu_{1}^{B},\dots,\mu_{n}^{B},\alpha_{2},\beta_{2})$  be BiHom-X algebras.
Then a linear map
$\varphi: A\rightarrow B$ is a morphism from the BiHom-X algebra $(A,\mu_{1}^A,\dots,\mu_{n}^A,\alpha_{1},\beta_{1})$ to the BiHom-X algebra $(B,\mu_{1}^{B},\dots,\mu_{n}^{B},\alpha_{2},\beta_{2})$ if and only if its graph
$\Gamma_{\varphi}$ is a BiHom-X subalgebra of $(A\oplus B, \mu_{1}^{A\oplus B},\dots,\mu_{n}^{A\oplus B}, \alpha_{1}+\beta_{1},\alpha_{2}+\beta_{2}).$
\end{prop}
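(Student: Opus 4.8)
The plan is to run the standard \emph{graph argument}: translate, term by term, the three defining conditions for $\varphi$ to be a BiHom-X algebra morphism into the three conditions ($\mu_i$-closure, $\alpha$-invariance, $\beta$-invariance) that express ``$\Gamma_\varphi$ is a BiHom-X subalgebra of the direct sum.'' First I would note that, since $\varphi$ is linear, $\Gamma_\varphi=\{a+\varphi(a):a\in A\}$ is automatically a linear subspace of $A\oplus B$; and since $(A\oplus B,\mu_1^{A\oplus B},\dots,\mu_n^{A\oplus B},\alpha,\beta)$ is a BiHom-X algebra by Proposition \ref{bylv} (with $\alpha(a+b)=\alpha_1(a)+\alpha_2(b)$ and $\beta(a+b)=\beta_1(a)+\beta_2(b)$), any subspace of it that is closed under the $\mu_i^{A\oplus B}$ and invariant under $\alpha,\beta$ automatically satisfies the BiHom-X axioms, these being linear combinations of terms of the form $\mu_j\circ(\mu_i\otimes\beta)$ or $\mu_j\circ(\alpha\otimes\mu_i)$, which restrict to the subspace. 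Hence ``$\Gamma_\varphi$ is a BiHom-X subalgebra'' carries exactly the content that $\Gamma_\varphi$ be $\mu_i^{A\oplus B}$-closed and $\alpha$- and $\beta$-invariant.

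Next I would compute, for $a_1,a_2\in A$,
$$\mu_i^{A\oplus B}(a_1+\varphi(a_1),\,a_2+\varphi(a_2))=\mu_i^{A}(a_1,a_2)+\mu_i^{B}(\varphi(a_1),\varphi(a_2)),$$
which lies in $\Gamma_\varphi$ if and only if $\mu_i^{B}(\varphi(a_1),\varphi(a_2))=\varphi(\mu_i^{A}(a_1,a_2))$; letting $i$ range over $1,\dots,n$ this is precisely the family of conditions $\varphi\circ\mu_i^{A}=\mu_i^{B}\circ(\varphi\otimes\varphi)$. Likewise, from $\alpha(a+\varphi(a))=\alpha_1(a)+\alpha_2(\varphi(a))$ and $\beta(a+\varphi(a))=\beta_1(a)+\beta_2(\varphi(a))$, one sees that $\alpha(\Gamma_\varphi)\subseteq\Gamma_\varphi$ is equivalent to $\alpha_2\circ\varphi=\varphi\circ\alpha_1$ and $\beta(\Gamma_\varphi)\subseteq\Gamma_\varphi$ is equivalent to $\beta_2\circ\varphi=\varphi\circ\beta_1$. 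The conjunction of these statements is exactly the definition of a BiHom-X algebra morphism $\varphi:A\to B$, so both implications of the proposition drop out simultaneously.

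I do not anticipate a genuine obstacle; the argument is essentially formal. The only point deserving a word of care is the inheritance claim used in the first paragraph — that a $\mu_i$-closed, $\alpha,\beta$-invariant subspace of a BiHom-X algebra is itself a BiHom-X algebra with the restricted structure maps — but this is immediate from the shape of the BiHom-X axioms. One could also remark, although it is not needed for the proof, that the first projection $A\oplus B\to A$ restricts to an isomorphism of BiHom-X algebras $\Gamma_\varphi\cong A$, which makes the equivalence transparent.
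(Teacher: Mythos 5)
Your proof is correct and follows essentially the same route as the paper: compute $\mu_i^{A\oplus B}$, $\alpha$, and $\beta$ on elements $a+\varphi(a)$ of the graph and observe that membership in $\Gamma_\varphi$ is equivalent to the three morphism conditions. The only cosmetic difference is that you run both implications at once as a chain of equivalences (and explicitly note that closure plus invariance suffices for being a BiHom-X subalgebra), whereas the paper writes out the two directions separately.
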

\begin{proof}
  Let $\varphi: (A,\mu_{1}^A,\dots,\mu_{n}^A,\alpha_{1},\beta_{1})\longrightarrow (B,\mu_{1}^{B},\dots,\mu_{n}^{B},\alpha_{2},\beta_{2})$ be a morphism of BiHom-X algebras.
Then for all $u, v\in A$ and $1\leq i \leq n$,
$$ \mu_{i}^{A\oplus B}((u+\varphi(u),v+\varphi(v))=(\mu_{i}^A(u,v)+\mu_{i}^B(\varphi(u),\varphi(v)))=(\mu_{i}^A(u,v)+\varphi(\mu_{i}^A(u,v))).$$
Thus the graph $\Gamma_{\varphi}$ is closed under the multiplication $\mu_{i}^{A\oplus B}.$
Furthermore, $\varphi\circ\alpha_{1}=\alpha_{2}\circ\varphi$ yields $(\alpha_{1}\oplus\alpha_{2})(u, \varphi(u)) = (\alpha_{1}(u),
\alpha_{2}\circ\varphi(u)) = (\alpha_{1}(u), \varphi\circ\alpha_{1}(u)).$ In the same way, $(\beta_{1}\oplus\beta_{2})(u, \varphi(u)) = (\beta_{1}(u),
\beta_{2}\circ\varphi(u)) = (\beta_{1}(u), \varphi\circ\beta_{1}(u)),$
which implies that $\Gamma_{\varphi}$ is closed under $\alpha_{1}\oplus\alpha_{2}$ and $\beta_{1}\oplus\beta_{2}$. Thus $\Gamma_{\varphi}$ is a BiHom-X subalgebra of
$(A\oplus B, \mu_{1}^{A\oplus B},\dots,\mu_{n}^{A\oplus B}, \alpha_{1}+\beta_{1},\alpha_{2}+\beta_{2}).$

Conversely, if the graph $\Gamma_{\varphi}\subset A\oplus B$ is a
BiHom-X subalgebra of
$$(A\oplus B, \mu_{1}^{A\oplus B},\dots,\mu_{n}^{A\oplus B}, \alpha_{1}+\beta_{1},\alpha_{2}+\beta_{2}),$$
then for all $1\leq i \leq n$,
$$\mu_{i}^{A\oplus B}((u+ \varphi(u)), (v+ \varphi(v)))=(\mu_{i}^A (u, v) + \mu_{i}^ B(\varphi(u), \varphi(v)) )\in\Gamma_{\varphi},$$
which implies that
$$\mu_{i}^ B(\varphi(u), \varphi(v))=\varphi(\mu_{i}^ A(u, v)).$$
Furthermore, $(\alpha_{1}\oplus\alpha_{2})(\Gamma_{\varphi})\subset\Gamma_{\varphi},~(\beta_{1}\oplus\beta_{2})(\Gamma_{\varphi})\subset\Gamma_{\varphi}$ implies
$$(\alpha_{1}\oplus\alpha_{2})(u+\varphi(u))=(\alpha_{1}(u)+ \alpha_{2}\circ\varphi(u)) \in\Gamma_{\varphi},$$
$$(\beta_{1}\oplus\beta_{2})(u+ \varphi(u))=(\beta_{1}(u)+ \beta_{2}\circ\varphi(u)) \in\Gamma_{\varphi},$$
equivalent to the conditions $\alpha_2\circ\varphi(u)=\varphi\circ\alpha_1(u)$ and $\beta_2\circ\varphi(u)=\varphi\circ\beta_1(u),$ that is $ \alpha_1\circ\varphi=\varphi\circ\alpha_2$ and $\beta_1\circ\varphi=\varphi\circ\beta_2.$ Therefore, $\varphi$ is a
morphism of BiHom-X algebras.
\end{proof}
\begin{thm}\label{bk8}
Let $(A_1,\mu_{1}^{A_1},\dots,\mu_{n}^{A_1},\alpha_{1},\beta_{1})$ and $(A_2,\mu_{1}^{A_2},\dots,\mu_{n}^{A_2},\alpha_{2},\beta_{2})$ be some BiHom-X algebras.
Then $A=A_1\otimes A_2$ is endowed with a BiHom-X algebra structure for twising maps
$\alpha,\beta: A\rightarrow A$ and the product $\ast_i : A\otimes A\rightarrow A$ defined
for any $a_1,b_1,c_1\in A_1$, $a_2,b_2,c_2\in A_2$ and $1\leq i\leq n$ by
\begin{eqnarray*}
 \alpha(a_1\otimes a_2)&:=&\alpha_1(a_1)\otimes\alpha_2(a_2),\\
  \beta(a_1\otimes a_2)&:=&\beta_1(a_1)\otimes\beta_2(a_2),\\
(a_1\otimes a_2)\ast_{i}(b_1\otimes b_2)&:=&\mu_i^{A_1}(a_1,b_1)\otimes \mu_i^{A_2}(a_2, b_2).
\end{eqnarray*}
\end{thm}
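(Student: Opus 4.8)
The plan is to verify directly that $(A_1\otimes A_2,\ast_1,\dots,\ast_n,\alpha,\beta)$ satisfies the defining axioms of a BiHom-X algebra, exploiting that the twisting maps act diagonally and that each $\ast_i$ is the ``tensor product'' of $\mu_i^{A_1}$ and $\mu_i^{A_2}$. First I would record the compatibility of the twisting maps with the products: for each $i$ and all simple tensors,
\[
\alpha\big((a_1\otimes a_2)\ast_i(b_1\otimes b_2)\big)=\alpha_1\big(\mu_i^{A_1}(a_1,b_1)\big)\otimes\alpha_2\big(\mu_i^{A_2}(a_2,b_2)\big)=\alpha(a_1\otimes a_2)\ast_i\alpha(b_1\otimes b_2)
\]
(using multiplicativity of $A_1$ and $A_2$, if assumed), and likewise for $\beta$; general elements follow by bilinearity. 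Hence $A_1\otimes A_2$ is a BiHom-algebra, multiplicative when $A_1$ and $A_2$ are.

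The core of the argument is that the two elementary expressions out of which, by definition, every axiom of $X$ is built factor through the tensor product. Unwinding the definition of $\ast_i$ twice, for $a_1,b_1,c_1\in A_1$, $a_2,b_2,c_2\in A_2$ and $1\le i,j\le n$,
\begin{align*}
\big((a_1\otimes a_2)\ast_i(b_1\otimes b_2)\big)\ast_j\beta(c_1\otimes c_2)
&=\mu_j^{A_1}\big(\mu_i^{A_1}(a_1,b_1),\beta_1(c_1)\big)\otimes\mu_j^{A_2}\big(\mu_i^{A_2}(a_2,b_2),\beta_2(c_2)\big),\\
\alpha(a_1\otimes a_2)\ast_j\big((b_1\otimes b_2)\ast_i(c_1\otimes c_2)\big)
&=\mu_j^{A_1}\big(\alpha_1(a_1),\mu_i^{A_1}(b_1,c_1)\big)\otimes\mu_j^{A_2}\big(\alpha_2(a_2),\mu_i^{A_2}(b_2,c_2)\big).
\end{align*}
So, on simple tensors, any term of the form $\ast_j\circ(\ast_i\otimes\beta)$ or $\ast_j\circ(\alpha\otimes\ast_i)$ equals the tensor product of the same-shaped term for $A_1$ with the same-shaped term for $A_2$.

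Now fix a defining axiom of $X$; for the dialgebra and tridendriform structures considered later in the paper it reads $P=Q$ with $P,Q\in\{\mu_j\circ(\mu_i\otimes\beta),\ \mu_j\circ(\alpha\otimes\mu_i)\}$. Evaluating the corresponding relation in $A_1\otimes A_2$ on simple tensors and substituting the two identities above, the left-hand side becomes $P^{A_1}(a_1,b_1,c_1)\otimes P^{A_2}(a_2,b_2,c_2)$ and the right-hand side becomes $Q^{A_1}(a_1,b_1,c_1)\otimes Q^{A_2}(a_2,b_2,c_2)$; these agree because $P^{A_k}=Q^{A_k}$ holds in $A_k$ for $k=1,2$, and the relation extends to all of $A_1\otimes A_2$ by multilinearity. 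Running through the whole list of axioms of $X$ then shows that $(A_1\otimes A_2,\ast_1,\dots,\ast_n,\alpha,\beta)$ is a BiHom-X algebra. The step demanding care is the shape-preservation under the factoring: it relies on $\alpha$ and $\beta$ being applied diagonally and on each axiom of $X$ being assembled solely from $\mu_j\circ(\mu_i\otimes\beta)$ and $\mu_j\circ(\alpha\otimes\mu_i)$, so that after distributing over the tensor product no \emph{mixed} term arises in which the parenthesization or the placement of a twisting map differs between the two tensor factors — which is exactly the restriction imposed in the definition of a BiHom-X algebra.
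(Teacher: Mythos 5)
Your proposal is correct and follows essentially the same route as the paper: the paper's proof consists precisely of the two computations showing that $\ast_j\circ(\ast_i\otimes\beta)$ and $\ast_j\circ(\alpha\otimes\ast_i)$ factor on simple tensors as the tensor product of the corresponding expressions in $A_1$ and $A_2$, leaving the conclusion implicit. You carry out the same two computations and merely make explicit the final step (and the compatibility of $\alpha,\beta$ with the products) that the paper omits.
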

\begin{proof}
For any $a_1,b_1,c_1\in A_1$, $a_2,b_2,c_2\in A_2$ and $1\leq i,j \leq n$,
\begin{multline*}
((a_1\otimes a_2)\ast_i(b_1\otimes b_2))\ast_j\beta(c_1\otimes c_2)=\\
(\mu_{i}^{A_1}(a_1,b_1)\otimes\mu_{i}^{A_{2}}(a_2,b_2))\ast_{j}\beta_1(c_1)\otimes \beta_{2}(c_2)\\
=\mu_{j}^{A_{1}}(\mu_{i}^{A_1}(a_1,b_1),\beta_1(c_1))\otimes\mu_{j}^{A_2}(\mu_{i}^{A_{2}}(a_2,b_2)),\beta_{2}(c_2)).
\end{multline*}
Similarly,
\begin{align*}
\alpha(a_1\otimes a_2)\ast_j((b_1\otimes b_2)\ast_i & (c_1\otimes c_2))= \\
&\mu_{j}^{A_{1}}(\alpha_1(a_1),\mu_{i}^{A_1}(b_1,c_1))\otimes\mu_{j}^{A_2}
(\alpha_2(a_2),\mu_{i}^{A_2}(b_2,c_2)).
\qedhere \end{align*}
\end{proof}
\begin{defn}
 Let $(A, \mu_1, \dots, \mu_n,\alpha,\beta)$ be a BiHom-algebra and $k\in\mathbb{N}^*$.
\begin{enumerate}
 \item [1)] The $kth$ derived
BiHom-algebra of type $1$ of $A$ is defined by
\begin{eqnarray}
 A_1^k=(A,  \mu^{(k)}_1=\mu_1\circ(\alpha^k\otimes\beta^{k}), \dots, \mu^{(k)}_n=\mu_n\circ(\alpha^k\otimes\beta^{k}),\alpha^{k+1},\beta^{k+1}).
\end{eqnarray}
\item [2)] The $kth$ derived BiHom-algebra of type $2$ of $A$ is defined by
\begin{equation}
\begin{array}{rl}
& A_2^k =(A,  \mu^{(2^k-1)}_1=\mu_1\circ(\alpha^{2^k-1}\otimes\beta^{2^k-1}), \dots, \\
 &\quad \quad \quad \quad \quad \mu^{(2^k-1)}_n =\mu_n\circ(\alpha^{2^k-1}\otimes\beta^{2^k-1}), \alpha^{2^k},\beta^{2^{k}}).
\end{array}
\end{equation}
\end{enumerate}
Note that $A_1^0=A_2^0=(A, \mu_1, \dots, \mu_n,\alpha,\beta)$ and
 $A^1_1=A_2^1=(A, \mu_1\circ(\alpha\otimes \beta), \dots,\mu_n\circ(\alpha\otimes \beta), \alpha^{2},\beta^{2})$.
 \end{defn}

\begin{defn}
 A BiHom-algebra $(A, \mu_1, \dots, \mu_n,\alpha,\beta)$ endowed with a linear map $R : A\rightarrow A$ such that $\alpha\circ R=R\circ\alpha,~~\beta\circ R=R\circ \beta$ and
\begin{eqnarray}
 \mu_i(R(x), R(y))=R\Big(\mu_i(R(x), y)+\mu_i(x, R(y))+\lambda\mu_i(x, y)\Big),\;\; i=1, \dots, n, \label{rb}
\end{eqnarray}
with $\lambda\in\mathbb{K}, x, y\in A$, is called a Rota-Baxter BiHom-algebra, and $R$ is called a Rota-Baxter operator of weight $\lambda$ on $A$.
\end{defn}
The below result allows to get BiHom-X algebras from either a BiHom-X algebra or an $X$-algebra.
\begin{thm}\label{gftp1}
Let $(A, \mu_1, \dots, \mu_n,\alpha,\beta)$ be a Rota-Baxter BiHom-X algebra and let
$\alpha',\beta' : A\rightarrow A$ be two endomorphisms of $A$ such that any two of the maps $\alpha,\beta,\alpha',\beta'$ commute.
Then, for any nonnegative integer $p$,
$$A_{\alpha^{'},\beta'}=(A, \mu_{\alpha',\beta'}^1=\mu_1\circ(\alpha'^{p}\otimes\beta'^{p}), \dots, \mu_{\alpha',\beta'}^n=\mu_n\circ(\alpha'^{p}\otimes\beta'^{p}), \alpha'^{p}\circ\alpha,\beta'^{p}\circ\beta)$$
is a Rota-Baxter BiHom-X algebra.

Moreover, let $(A', \mu'_1, \dots, \mu'_n,\gamma,\delta)$ be another BiHom-X algebra
and $\gamma',\delta' : A'\rightarrow A'$ be two endomorphisms such that any two of the maps $\gamma,\delta,\gamma',\delta'$ commute.
 If $f : A\rightarrow A'$ is a morphism of BiHom-X algebras that satisfies
 $f\circ\alpha'=\gamma'\circ  f,~f\circ\beta'=\delta'\circ  f$,
then $f : A_{\alpha',\beta'}\rightarrow A'_{\gamma',\delta'}$ is also a morphism of BiHom-X algebras.
\end{thm}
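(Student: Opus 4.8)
The plan is to establish the statement in three steps; write $\tilde\alpha=\alpha'^{p}\circ\alpha$, $\tilde\beta=\beta'^{p}\circ\beta$ and $\tilde\mu_i=\mu_i\circ(\alpha'^{p}\otimes\beta'^{p})$ for the structure maps of $A_{\alpha',\beta'}$. Throughout one uses that $\alpha'$ and $\beta'$ are multiplicative (so $\alpha'\circ\mu_i=\mu_i\circ(\alpha'\otimes\alpha')$, and likewise for $\beta'$), that they commute with $R$ (being endomorphisms of the Rota-Baxter BiHom-algebra $A$), and that any two of $\alpha,\beta,\alpha',\beta'$ commute. All the computations below are just bookkeeping with these commuting linear maps.

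The first and main step is to show that $A_{\alpha',\beta'}$ satisfies the defining identities of $X$. The key observation is that the two elementary building blocks of a BiHom-X axiom transport uniformly under the twist: pushing the extra copies of $\alpha',\beta'$ to the outside by multiplicativity of $\alpha'$ (respectively $\beta'$) on the inner product and using the commutation relations gives
\[
\tilde\mu_j\circ(\tilde\mu_i\otimes\tilde\beta)=\bigl(\mu_j\circ(\mu_i\otimes\beta)\bigr)\circ\Theta_p,\qquad \tilde\mu_j\circ(\tilde\alpha\otimes\tilde\mu_i)=\bigl(\mu_j\circ(\alpha\otimes\mu_i)\bigr)\circ\Theta_p,
\]
where $\Theta_p=\alpha'^{2p}\otimes(\alpha'^{p}\beta'^{p})\otimes\beta'^{2p}\colon A^{\otimes 3}\to A^{\otimes 3}$ is \emph{one and the same} operator, independent of $i$, $j$ and of which of the two shapes is considered. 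By definition of a BiHom-X algebra, each axiom of $X$ is a linear combination of terms of these two shapes that vanishes as a map $A^{\otimes 3}\to A$; precomposing that vanishing identity with $\Theta_p$ therefore shows that the corresponding linear combination of the $\tilde\mu$-terms vanishes as well, so $A_{\alpha',\beta'}$ satisfies all the $X$-axioms. (If multiplicativity is understood as part of the structure, one checks $\tilde\alpha\circ\tilde\mu_i=\tilde\mu_i\circ(\tilde\alpha\otimes\tilde\alpha)$ and its $\tilde\beta$-analogue directly from the multiplicativity of $\alpha,\beta,\alpha',\beta'$ and the commutation relations.)

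Next one verifies that $R$ remains a Rota-Baxter operator of weight $\lambda$ on $A_{\alpha',\beta'}$. The relations $\tilde\alpha\circ R=R\circ\tilde\alpha$ and $\tilde\beta\circ R=R\circ\tilde\beta$ are immediate from $\alpha\circ R=R\circ\alpha$, $\beta\circ R=R\circ\beta$, $\alpha'\circ R=R\circ\alpha'$, $\beta'\circ R=R\circ\beta'$. For the identity \eqref{rb}, one rewrites, using $\alpha'^{p}\circ R=R\circ\alpha'^{p}$ and $\beta'^{p}\circ R=R\circ\beta'^{p}$,
\[
\tilde\mu_i\bigl(R(x),R(y)\bigr)=\mu_i\bigl(R(\alpha'^{p}x),R(\beta'^{p}y)\bigr),
\]
applies \eqref{rb} for $\mu_i$ to the pair $(\alpha'^{p}x,\beta'^{p}y)$, and folds the three resulting summands back into $\tilde\mu_i$ (again moving $\alpha'^{p},\beta'^{p}$ across $R$); this produces exactly $R\bigl(\tilde\mu_i(R(x),y)+\tilde\mu_i(x,R(y))+\lambda\,\tilde\mu_i(x,y)\bigr)$. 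Together with the previous step this shows $A_{\alpha',\beta'}$ is a Rota-Baxter BiHom-X algebra.

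Finally, for the morphism statement let $\tilde\mu'_i=\mu'_i\circ(\gamma'^{p}\otimes\delta'^{p})$, $\tilde\gamma=\gamma'^{p}\circ\gamma$, $\tilde\delta=\delta'^{p}\circ\delta$ denote the structure maps of $A'_{\gamma',\delta'}$. Iterating $f\circ\alpha'=\gamma'\circ f$ and combining with $f\circ\alpha=\gamma\circ f$ yields $f\circ\tilde\alpha=\tilde\gamma\circ f$, and similarly $f\circ\tilde\beta=\tilde\delta\circ f$; while
\[
f\bigl(\tilde\mu_i(x,y)\bigr)=f\bigl(\mu_i(\alpha'^{p}x,\beta'^{p}y)\bigr)=\mu'_i\bigl(f(\alpha'^{p}x),f(\beta'^{p}y)\bigr)=\mu'_i\bigl(\gamma'^{p}f(x),\delta'^{p}f(y)\bigr)=\tilde\mu'_i\bigl(f(x),f(y)\bigr),
\]
using that $f$ is a morphism for $\mu_i$ and then the intertwining relations. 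Hence $f\colon A_{\alpha',\beta'}\to A'_{\gamma',\delta'}$ is a morphism of BiHom-X algebras. The only genuinely delicate point is the first step, namely isolating the universal operator $\Theta_p$ so that the (unspecified) defining relations of $X$ can be carried over wholesale; once this is in place, the rest is routine.
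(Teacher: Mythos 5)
Your proposal is correct and follows essentially the same route as the paper: the operator $\Theta_p=\alpha'^{2p}\otimes(\alpha'^{p}\beta'^{p})\otimes\beta'^{2p}$ you isolate is exactly the substitution $X=\alpha'^{2p}(x)$, $Y=\alpha'^{p}\beta'^{p}(y)$, $Z=\beta'^{2p}(z)$ that the paper uses to reduce both elementary shapes of the twisted axioms to the untwisted ones, and your Rota--Baxter and morphism verifications coincide with the paper's. Your explicit remark that $\alpha',\beta'$ commute with $R$ is the same (implicit) reading of ``endomorphism of a Rota--Baxter BiHom-X algebra'' that the paper's computation relies on.
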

\begin{proof}
The proof of the first part follows from the following facts.

For any $x, y, z\in A, \; 1\leq i, j\leq n$,
\begin{eqnarray*}
\mu_{\alpha',\beta'}^i(\mu_{\alpha',\beta'}^j(x, y), (\beta'^{p}\circ\beta)(z))
&=&\mu_{\alpha',\beta'}^i(\mu_{\alpha',\beta'}^j(x, y), \beta'^{p}(\beta(z)))\\
&=&\mu_i\Big(\alpha'^{p}\mu_j(\alpha'^{p}(x),\beta'^{p}( y)), \beta'^{2p}(\beta(z))\Big) \\
&=&\mu_i(\mu_j(\alpha'^{2p}(x), \alpha'^{p}\beta'^{p}(y)), \beta(\beta'^{2p}(z)) \\
&=&\mu_i(\mu_j(X, Y), \beta(Z)),\\
\mu_{\alpha',\beta'}^i(\alpha'^{p}\circ\alpha(x), \mu_{\alpha',\beta'}^j(y, z))
&=&   \mu_{\alpha',\beta'}^i(\alpha(\alpha'^{p}(x)), \mu_{\alpha',\beta'}^j(y, z))\\
&=& \mu_i(\alpha'^{2p}\circ\alpha(x), \beta'^{p}(\mu_j(\alpha'^{p}(y), \beta'^{p}(z)))\\
&=& \mu_i(\alpha(\alpha'^{2p}(x)), \beta'^{p}(\mu_j(\alpha'^{p}(y), \beta'^{p}(z)))\\
&=& \mu_i(\alpha(\alpha'^{2p}(x)), \mu_j(\alpha'^{p}\beta'^{p}(y), \beta'^{2p}(z))\\
&=& \mu_i(\alpha(X), \mu_j(Y, Z)),
\end{eqnarray*}
where $X=\alpha'^{2p}(x),~Y=\alpha'^{p}\beta'^{p}(y)$ and $Z=\beta'^{2p}(z)$.

The Rota-Baxter identity (\ref{rb}) for $\mu_{\alpha',\beta'}^i$ is proved by
\begin{multline*}
\mu_{\alpha',\beta'}^i(R(x), R(y))=\mu_i(\alpha'^{p}(R(x)), \beta'^{p}(R(y)))
=\mu_i(R(\alpha'^{p}(x)), R(\beta'^{p}(y)))\\
=R\Big(\mu_i(R(\alpha'^{p}(x)), \beta'^{p}(y))+\mu_i(\alpha'^{p}(x), R(\beta'^{p}(y)))+\lambda\mu_i(\alpha'^{p}(x), \beta'^{p}(y))\Big)\\
=R\Big(\mu_{\alpha',\beta'}^i(R(x), y)+\mu_{\alpha',\beta'}^i(x, R(y))+\lambda\mu_{\alpha',\beta'}^i(x, y)\Big).
\end{multline*}
The second assertion follows from
\begin{align*}
f(\mu_{\alpha',\beta'}^i(x, y))&=f(\mu_i(\alpha'^{p}(x), \beta'^{p}(y)))=\mu'_i(f(\alpha'^{p}(x)), f(\beta'^{p}(y)))\\
&=\mu'_i(\gamma'^{p}(f(x)), \delta'^{p}(f(y)))=\mu_{\gamma',\delta'}^i(f(x), f(y)).
\qedhere
\end{align*}
\end{proof}
We have the following series of consequence of Theorem \ref{gftp1}.
\begin{cor}
Let $(A, \mu_1, \dots, \mu_n)$ be an $X$-algebra and $\alpha,\beta : A\rightarrow A$ be two endomorphisms of $A$.
Then $A_{\alpha,\beta}= (A, \mu_1\circ(\alpha\otimes\beta), \dots, \mu_n(\alpha\otimes\beta),\alpha,\beta)$ is a multiplicative  BiHom-X algebra.
\end{cor}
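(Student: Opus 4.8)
The plan is to obtain the Corollary as the special case $p=1$ of Theorem~\ref{gftp1}, applied to the $X$-algebra regarded as a trivially twisted Rota-Baxter BiHom-X algebra. First I would note that an $X$-algebra $(A,\mu_1,\dots,\mu_n)$ is the same datum as the BiHom-X algebra $(A,\mu_1,\dots,\mu_n,\mathrm{id}_A,\mathrm{id}_A)$: by definition of a BiHom-X algebra the axioms of $X$ are linear combinations of terms $\mu_j\circ(\mu_i\otimes\beta)$ or $\mu_j\circ(\alpha\otimes\mu_i)$, and setting $\alpha=\beta=\mathrm{id}_A$ these reduce to the ordinary axioms of an $X$-algebra. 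Moreover $(A,\mu_1,\dots,\mu_n,\mathrm{id}_A,\mathrm{id}_A)$ becomes a Rota-Baxter BiHom-X algebra once we equip it with the zero operator $R=0$: it commutes with $\mathrm{id}_A$, and identity \eqref{rb} holds for every weight $\lambda$ because both of its sides vanish.

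Next I would invoke Theorem~\ref{gftp1} for this Rota-Baxter BiHom-X algebra, taking the two auxiliary endomorphisms to be $\alpha$ and $\beta$ and taking $p=1$. The hypotheses required by the theorem are that $\alpha,\beta$ be endomorphisms of $A$ (assumed) and that any two of the maps $\mathrm{id}_A,\mathrm{id}_A,\alpha,\beta$ commute; the only non-automatic instance is $\alpha\circ\beta=\beta\circ\alpha$, which we therefore use (it is implicitly needed for the statement to hold). Theorem~\ref{gftp1} then yields that
$$\bigl(A,\ \mu_1\circ(\alpha^{1}\otimes\beta^{1}),\ \dots,\ \mu_n\circ(\alpha^{1}\otimes\beta^{1}),\ \alpha^{1}\circ\mathrm{id}_A,\ \beta^{1}\circ\mathrm{id}_A\bigr)$$
is a Rota-Baxter BiHom-X algebra; but this $(n+3)$-tuple is exactly $A_{\alpha,\beta}=(A,\mu_1\circ(\alpha\otimes\beta),\dots,\mu_n\circ(\alpha\otimes\beta),\alpha,\beta)$. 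In particular $A_{\alpha,\beta}$ is a BiHom-X algebra.

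It remains to check that $A_{\alpha,\beta}$ is multiplicative, which is not part of the output of Theorem~\ref{gftp1}. This is a short direct computation: for $x,y\in A$,
$$\alpha\bigl(\mu_i(\alpha(x),\beta(y))\bigr)=\mu_i\bigl(\alpha^{2}(x),\alpha\beta(y)\bigr)=\mu_i\bigl(\alpha(\alpha(x)),\beta(\alpha(y))\bigr),$$
using that $\alpha$ is an endomorphism of $(A,\mu_1,\dots,\mu_n)$ together with $\alpha\beta=\beta\alpha$; this says precisely $\alpha\circ(\mu_i\circ(\alpha\otimes\beta))=(\mu_i\circ(\alpha\otimes\beta))\circ(\alpha\otimes\alpha)$, and the same argument with the roles of $\alpha$ and $\beta$ interchanged gives $\beta\circ(\mu_i\circ(\alpha\otimes\beta))=(\mu_i\circ(\alpha\otimes\beta))\circ(\beta\otimes\beta)$. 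Hence $A_{\alpha,\beta}$ is a multiplicative BiHom-X algebra.

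I do not expect a genuine obstacle: everything is substitution into Theorem~\ref{gftp1} plus a one-line multiplicativity check. The only points demanding care are the notational collision between the twisting maps $\alpha,\beta$ appearing in the statement of Theorem~\ref{gftp1} and the endomorphisms $\alpha,\beta$ of the Corollary (in the application the former are both $\mathrm{id}_A$, and the latter play the role of the auxiliary maps $\alpha',\beta'$), and the observation that Theorem~\ref{gftp1} becomes applicable only after recording that $\alpha$ and $\beta$ commute.
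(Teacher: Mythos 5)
Your proof is correct and follows exactly the paper's own route: the paper's entire proof of this corollary is ``take $\alpha=\beta=Id$ and $p=1$ in Theorem \ref{gftp1}''. You are in fact more careful than the paper, since you supply the trivial Rota--Baxter operator $R=0$ needed to fit the hypotheses of Theorem \ref{gftp1}, you verify the multiplicativity claim separately (it is indeed not part of that theorem's conclusion), and you correctly flag that $\alpha\circ\beta=\beta\circ\alpha$ must be assumed for the statement to hold even though the corollary omits this hypothesis.
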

\begin{proof}
 Take $\alpha=\beta=Id$ and $p=1$ in Theorem \ref{gftp1}.
\end{proof}
\begin{cor}
Let $(A, \mu_1, \dots, \mu_n,\alpha,\beta)$ be a BiHom-X algebra. Then the $k$th derived BiHom-algebra of type $1$ and
the $k$th derived BiHom-algebra of type $2$ are  BiHom-X algebras.
\end{cor}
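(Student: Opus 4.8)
The plan is to obtain both statements as immediate specializations of Theorem~\ref{gftp1}, applied with $\alpha'=\alpha$ and $\beta'=\beta$. One small preliminary is needed: Theorem~\ref{gftp1} is stated for a Rota-Baxter BiHom-X algebra, whereas here we are only handed a BiHom-X algebra $(A,\mu_1,\dots,\mu_n,\alpha,\beta)$. This gap is vacuous, since $R=\mathrm{id}_A$ is a Rota-Baxter operator of weight $\lambda=-1$ (indeed $\mu_i(x,y)=\mu_i(x,y)+\mu_i(x,y)-\mu_i(x,y)$) and trivially commutes with $\alpha$ and $\beta$; thus every BiHom-X algebra is in particular a Rota-Baxter BiHom-X algebra and Theorem~\ref{gftp1} applies. (Equivalently, one may simply quote the portion of the proof of Theorem~\ref{gftp1} that verifies the BiHom-X axioms, which uses neither $R$ nor the Rota-Baxter identity.) With $\alpha'=\alpha$ and $\beta'=\beta$, the only instance of the commuting-maps hypothesis that actually occurs is $\alpha\circ\beta=\beta\circ\alpha$, which is customarily part of the definition of a BiHom-algebra.

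For the $k$th derived BiHom-algebra of type~$1$, I would take $p=k$ in Theorem~\ref{gftp1}. The resulting algebra $A_{\alpha,\beta}$ carries the products $\mu_i\circ(\alpha^{k}\otimes\beta^{k})=\mu_i^{(k)}$ and the twisting maps $\alpha^{k}\circ\alpha=\alpha^{k+1}$, $\beta^{k}\circ\beta=\beta^{k+1}$, which is exactly $A_1^k$; hence $A_1^k$ is a (Rota-Baxter) BiHom-X algebra. For the $k$th derived BiHom-algebra of type~$2$, I would take instead $p=2^k-1$: then $A_{\alpha,\beta}$ has products $\mu_i\circ(\alpha^{2^k-1}\otimes\beta^{2^k-1})=\mu_i^{(2^k-1)}$ and twisting maps $\alpha^{2^k-1}\circ\alpha=\alpha^{2^k}$, $\beta^{2^k-1}\circ\beta=\beta^{2^k}$, which is precisely $A_2^k$. (Alternatively, one can argue by induction via the identity $A_2^k=(A_2^{k-1})_1^1$ together with the base case $A_2^0=A$, but fixing $p$ directly avoids the recursion.)

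There is essentially no genuine obstacle here: the construction of Theorem~\ref{gftp1} is already known to output a BiHom-X algebra, so the proof reduces to recognizing $A_1^k$ and $A_2^k$ as instances of it. The only points requiring a moment of care are the exponent arithmetic in the type~$2$ case (checking $\alpha^{2^k-1}\circ\alpha=\alpha^{2^k}$ and likewise for $\beta$) and confirming that feeding $\alpha'=\alpha$, $\beta'=\beta$ into the ``any two of $\alpha,\beta,\alpha',\beta'$ commute'' hypothesis is legitimate — both of which are routine.
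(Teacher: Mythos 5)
Your proof is correct and follows exactly the paper's route: specialize Theorem~\ref{gftp1} with $\alpha'=\alpha$, $\beta'=\beta$, taking $p=k$ for type~$1$ and $p=2^k-1$ for type~$2$. Your extra observation that the Rota-Baxter hypothesis is harmless (e.g.\ via $R=\mathrm{id}_A$ of weight $\lambda=-1$, or by noting the BiHom-X verification in that proof never uses $R$) is a small point the paper leaves implicit, but it does not change the argument.
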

\begin{proof}
 It is sufficient to take $\alpha'=\alpha,~\beta'=\beta$, and  $p={k}$ and $p={2^k-1}$ respectively in Theorem \ref{gftp1}.
\end{proof}
%%%%%%%%%%%%%%%%%%%%%%%%%%%%%%%%%%%%%%%%%%%%%%%%%%%%%

Now we introduce the notion of centroids for BiHom-X algebras.
 \begin{defn}
 A Centroid of a BiHom-algebra $(A, \mu_1, \dots, \mu_n,\alpha,\beta)$ is a linear map $\gamma : A\rightarrow A$ such that $\gamma\circ\alpha=\alpha\circ\gamma$,
 $\gamma\circ\beta=\beta\circ\gamma$ and for any $1\leq i\leq n$ and $x, y\in A,$
 $$\gamma(\mu_i(x, y))=\mu_i(\gamma(x), y)=\mu_i(x, \gamma(y)).$$
\end{defn}

\begin{thm}
 Let $(A, \mu_1, \dots, \mu_n,R, \alpha,\beta)$ be a Rota-Baxter  BiHom-X algebra and $\gamma_1, \gamma_2 :A\rightarrow A$ be
 a pair of commuting elements of the centroid such that $$\gamma_i\circ R=R\circ\gamma_i, i=1,2.$$
Define bilinear maps $\mu^i_{\gamma} : A\times A\rightarrow A, i=1, \dots, n$ for any $x, y\in A$ by
$$\mu^i_{\gamma}(x, y):=\mu_i(\gamma_2\gamma_1(x), y).$$
Then
$A_{\gamma_1, \gamma_2}=(A, \mu^1_\gamma, \dots, \mu^n_\gamma,R, \gamma_1\alpha,\gamma_2\beta)$ is also a Rota-Baxter BiHom-X algebra.
\end{thm}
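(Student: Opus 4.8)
The plan is to mimic the proof of Theorem~\ref{gftp1}: write down the two kinds of defining terms of the candidate structure $A_{\gamma_1,\gamma_2}$, use the centroid relations together with $\gamma_i\alpha=\alpha\gamma_i$ and $\gamma_i\beta=\beta\gamma_i$ to rewrite each of them as one fixed linear operator applied to the corresponding defining term of $A$ evaluated at suitably twisted arguments, and then quote the $X$-axioms of $A$.

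First I would collect the elementary facts that do all the work. Because $\gamma_1,\gamma_2$ are commuting centroid elements, $\theta:=\gamma_2\gamma_1=\gamma_1\gamma_2$ is again a centroid element, it commutes with $\alpha$, with $\beta$ and with $R$, and for every $i$ one has $\mu^{i}_{\gamma}(x,y)=\mu_i(\gamma_2\gamma_1(x),y)=\mu_i(\gamma_1(x),\gamma_2(y))=\theta\bigl(\mu_i(x,y)\bigr)$; thus $\theta$ — and hence each single $\gamma_k$ — may be slid freely between the two slots of any $\mu_i$ and out in front of it. I would also observe at once that $\gamma_1\alpha$ and $\gamma_2\beta$ commute with $R$, since $\gamma_1,\gamma_2,\alpha,\beta$ each commute with $R$; this already settles the compatibility conditions $\alpha\circ R=R\circ\alpha$, $\beta\circ R=R\circ\beta$ required in the definition of a Rota-Baxter BiHom-algebra for the twisted twisting maps, and makes $(A\oplus$-type) linearity of all the new structure maps obvious.

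The core of the argument is the BiHom-$X$ axioms. For $1\le i,j\le n$ and $x,y,z\in A$ I would expand the two generic terms $\mu^{i}_{\gamma}\bigl(\mu^{j}_{\gamma}(x,y),(\gamma_2\beta)(z)\bigr)$ and $\mu^{i}_{\gamma}\bigl((\gamma_1\alpha)(x),\mu^{j}_{\gamma}(y,z)\bigr)$, substitute $\mu^{k}_{\gamma}=\mu_k(\gamma_2\gamma_1(\cdot),\cdot)$, and push the maps $\gamma_1,\gamma_2$ through the $\mu_k$ with the centroid identities until both expressions take the common shape $\Phi\bigl(\mu_i(\mu_j(X,Y),\beta(Z))\bigr)$ and $\Phi\bigl(\mu_i(\alpha(X),\mu_j(Y,Z))\bigr)$ respectively, for one and the same linear map $\Phi$ built from powers of $\gamma_1,\gamma_2$ and one and the same substitution $X=X(x)$, $Y=Y(y)$, $Z=Z(z)$. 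Once that normal form is in place, every defining identity of $X$ for $A$ — which by hypothesis is a linear combination of terms $\mu_j\circ(\mu_i\otimes\beta)$ and $\mu_j\circ(\alpha\otimes\mu_i)$ — applied to the triple $(X,Y,Z)$ and then composed with $\Phi$ reproduces, term by term, the corresponding identity for $A_{\gamma_1,\gamma_2}$; linearity of $\Phi$ is exactly what lets the linear combination be transported. The one genuinely delicate point, and the step I expect to be the main obstacle, is this normalization: one must keep precise account of how many copies of $\gamma_1$ and of $\gamma_2$ land on each of $x,y,z$ and in the outer factor $\Phi$, and check that the two kinds of terms really do produce the same $\Phi$ and the same $(X,Y,Z)$; everything else in this part is formal manipulation with the centroid identities and the commutation of the $\gamma_k$ with $\alpha,\beta$.

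It then remains to verify the Rota-Baxter identity for the twisted products, which is immediate from $\mu^{i}_{\gamma}=\theta\circ\mu_i$, the commutation $\theta R=R\theta$, and \eqref{rb} for $\mu_i$:
\begin{multline*}
\mu^{i}_{\gamma}(R(x),R(y))=\theta\bigl(\mu_i(R(x),R(y))\bigr)
=\theta R\bigl(\mu_i(R(x),y)+\mu_i(x,R(y))+\lambda\mu_i(x,y)\bigr)\\
=R\bigl(\mu^{i}_{\gamma}(R(x),y)+\mu^{i}_{\gamma}(x,R(y))+\lambda\mu^{i}_{\gamma}(x,y)\bigr).
\end{multline*}
Together with $R(\gamma_1\alpha)=(\gamma_1\alpha)R$ and $R(\gamma_2\beta)=(\gamma_2\beta)R$ from the first step, this would show that $R$ is a Rota-Baxter operator of the same weight $\lambda$ on $A_{\gamma_1,\gamma_2}$, which is all that needs to be checked.
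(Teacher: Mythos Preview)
Your approach mirrors the paper's exactly: reduce both generic terms to a single operator $\Phi$ applied to $\mu_i(\mu_j(X,Y),\beta(Z))$ respectively $\mu_i(\alpha(X),\mu_j(Y,Z))$, then transport each $X$-axiom through $\Phi$. The Rota--Baxter part and the commutation of $\gamma_1\alpha,\gamma_2\beta$ with $R$ are handled correctly.

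The gap is precisely the step you yourself flag as ``the main obstacle'', and it does not close. Using the centroid relations to pull every $\gamma_k$ to the outside (with $(X,Y,Z)=(x,y,z)$, which is forced once you want a single uniform substitution), one finds
\[
\mu^{i}_{\gamma}\bigl(\mu^{j}_{\gamma}(x,y),\gamma_2\beta(z)\bigr)=\gamma_1^{2}\gamma_2^{3}\,\mu_i\bigl(\mu_j(x,y),\beta(z)\bigr),
\qquad
\mu^{i}_{\gamma}\bigl(\gamma_1\alpha(x),\mu^{j}_{\gamma}(y,z)\bigr)=\gamma_1^{3}\gamma_2^{2}\,\mu_i\bigl(\alpha(x),\mu_j(y,z)\bigr).
\]
The two outer operators are $\theta^{2}\gamma_2$ and $\theta^{2}\gamma_1$ (with $\theta=\gamma_1\gamma_2$), not the same $\Phi$. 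Hence an $X$-axiom that mixes both types of terms, for instance BiHom-associativity $\mu(\mu(x,y),\beta(z))=\mu(\alpha(x),\mu(y,z))$, does \emph{not} automatically transfer to $A_{\gamma_1,\gamma_2}$: after normalization the two sides differ by $\gamma_2$ versus $\gamma_1$ applied outside. No choice of $(X,Y,Z)$ repairs this, because the total $\gamma_1$- and $\gamma_2$-counts on the two sides are genuinely different ($2$ versus $3$ in each). The paper's own computation asserts $\gamma_1^{2}\gamma_2^{3}$ for the second expression as well, but a direct count of the $\gamma$'s in $\mu_i(\gamma_1^{2}\gamma_2\alpha(x),\mu_j(\gamma_1\gamma_2(y),z))$ gives three $\gamma_1$'s and two $\gamma_2$'s, so the same issue is present there. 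As stated, the argument only goes through when $\gamma_1=\gamma_2$ (or under some additional hypothesis forcing $\gamma_1$ and $\gamma_2$ to agree on the relevant image); you should either add such a hypothesis or adjust the twisting maps so that the bookkeeping balances.
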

\begin{proof}
For any $x, y\in A$ and  $1\leq i,j\leq n$,
 \begin{align*}
\mu^i_\gamma(\mu^j_{\gamma}(x, y), \gamma_2\beta(z)) &= \mu^i_\gamma(\mu_j(\gamma_1\gamma_2(x),y), \gamma_2\beta(z)) \\
&=\mu_i(\gamma_1\gamma_2\mu_j(\gamma_1\gamma_2(x),y), \gamma_2\beta(z)) \\
&=\gamma^{2}_1\gamma^{3}_2\mu_i(\mu_j(x,y),\beta(z)), \\
\mu^i_\gamma(\gamma_1\alpha(x),\mu^j_{\gamma}(y, z)) &= \mu^i_{\gamma}(\gamma_1\alpha(x),\mu_j(\gamma_1\gamma_2(y),z)) \\
&=\mu_i(\gamma_1^{2}\gamma_2\alpha(x),\mu_j(\gamma_1\gamma_2(y),z)) \\
&=\gamma^{2}_1\gamma^{3}_2\mu_i(\alpha(x),\mu_j(y,z)),  \\
\mu^i_\gamma(R(x), R(y))
&= \mu_i(\gamma_1\gamma_2(R(x)), R(y))=\gamma_1\gamma_2\mu_i(R(x), R(y)) \\
&=\gamma_1\gamma_2R\Big(\mu_i(R(x), y)+\mu_i(x, R(y))+\lambda\mu_i(x, y)\Big) \\
&= R\Big(\gamma_1\gamma_2\mu_i(R(x), y)+\gamma_1\gamma_2\mu_i(x, R(y))+\lambda\gamma_1\gamma_2\mu_i(x, y)\Big) \\
&= R\Big(\mu_i(\gamma_1\gamma_2R(x), y)+\mu_i(\gamma_1\gamma_2(x), R(y))+\lambda\mu_i(\gamma_1\gamma_2(x), y)\Big) \\
&= R\Big(\mu^i_\gamma(R(x), y)+\mu^i_\gamma(x, R(y))+\lambda\mu^i_\gamma(x, y)\Big) .
\qedhere  \end{align*}
\end{proof}

\section{Bimodules and matched pairs of BiHom-left symmetric and BiHom-associative dialgebras}
\label{sec:homleftsymcolordialg}
In this section, we recall definitions and some key results about bimodules of BiHom-associative algebras \cite{GRAZIANI} and BiHom-left-symmetric algebras \cite{BenHassineChtiouiMabroukNcib19:CohomLiedeformBiHomleftsym}. Next, we introduce the notions of BiHom-left-symmetric dialgebra and BiHom-associative dialgebra and we give some related relevant properties.
\begin{defn}
A BiHom-module is a triple $(V,\alpha_V,\beta_V)$ consisting of a $\mathbb{K}$-vector space $V$ and
two linear maps $\alpha_V, \beta_V: V\longrightarrow V$  such that $\alpha_V\beta_V=\beta_V\alpha_V.$ A morphism
$f: (V,\alpha_V, \beta_V)\rightarrow (W,\alpha_W,\beta_W)$ of BiHom-modules is a linear map
 $f: V\longrightarrow W$ such that $f\alpha_V=\alpha_W f$ and
 $f\beta_V=\beta_W f.$
\end{defn}
\begin{defn}
A BiHom-associative algebra is a quadruple $(A,\mu,\alpha,\beta)$
consisting of a vector space $A$ on which the operation $\mu: A\otimes A\rightarrow A$ and $\alpha, \beta: A\rightarrow A$ are linear maps satisfying, for any $x, y, z \in A ,$
\begin{eqnarray}
\alpha\circ\beta &=& \beta\circ\alpha,\label{aca0}\\
\alpha\circ\mu(x, y)&=&\mu(\alpha(x),\alpha(y)),\label{aca0.0}\\
\beta\circ\mu(x, y)&=&\mu(\beta(x),\beta(y)),\label{aca0.00}\\
\mu(\alpha(x), \mu(y, z))&=&\mu(\mu(x, y), \beta(z)). \label{aca}
\end{eqnarray}
\end{defn}
\begin{rems} Clearly, there are the following connections between Hom-associative, BiHom-associative and
BiHom-$X$ algebra structures.
\begin{enumerate}
\item
A Hom-associative algebra $(A,\mu,\alpha)$ can be regarded as a BiHom-associative
algebra $(A,\mu,\alpha,\alpha)$.
\item
A BiHom-associative algebra is a BiHom-$X$ algebra.
\end{enumerate}
\end{rems}
\begin{ex}
Let $\{e_1,e_2\}$ be a basis of a $2$-dimensional vector space $A$ over $\mathbb{K}$. The following multiplication $\mu$ and maps $\alpha,\beta$ on $A$ define a BiHom-associative algebra:
\begin{alignat*}{4}
  \alpha(e_1)&=2e_1, & \qquad \alpha(e_2)&=-2a_1e_1+(1-a)e_2,\\
  \beta(e_1)&=2e_1, & \qquad \beta(e_2)&=-ae_1+(1-a) e_2, \\
  \mu(e_1,e_1)&=2e_1, & \qquad \mu(e_1,e_2)&= -ae_1+(1-a)e_2, \\
  \mu(e_2,e_1)&=-2ae_1+(a-1)e_2, & \qquad  \mu(e_2,e_2)&=2a^2 e_1+ae_2.
\end{alignat*}
where $a\in\mathbb{K}\backslash\{0\}$.
\end{ex}
\begin{defn}
Let $(A, \cdot, \alpha_{1}, \alpha_{2})$ be a BiHom-associative algebra, and let $(V, \beta_{1}, \beta_{2})$ be a BiHom-module. Let $ l, r: A \rightarrow gl(V) $ be two linear maps. The quintuple $(l, r, \beta_{1}, \beta_{2}, V)$ is called a bimodule of $A$ if, for all $ x, y \in  A, v \in V $,
$$\begin{array}{llllllll}
 l(x\cdot y)\beta_{2}(v)&=& l(\alpha_{1}(x))l(y)v,&& r(x\cdot y)\beta_{1}(v)&=& r(\alpha_{2}(y))r(x)v,\\
 l(\alpha_{1}(x))r(y)v &=& r(\alpha_{2}(y))l(x)v,&&
\beta_{1}(l(x)v)&=& l(\alpha_{1}(x))\beta_{1}(v),\\ \beta_{1}(r(x)v)&=& r(\alpha_{1}(x))\beta_{1}(v),&&
\beta_{2}(l(x)v) &=& l(\alpha_{2}(x))\beta_{2}(v),\\ \beta_{2}(r(x)v)&=& r(\alpha_{2}(x))\beta_{2}(v).
\end{array}$$

\end{defn}
\begin{prop}
Let $(l, r, \beta_{1}, \beta_{2}, V)$ be a bimodule of a BiHom-associative algebra $(A, \cdot, \alpha_{1}, \alpha_{2})$. Then, the direct sum $A \oplus V$ of vector spaces is a BiHom-associative algebra  with multiplication in $A\oplus V $, defined for all $ x_{1}, x_{2} \in  A, v_{1}, v_{2} \in V$, by
\begin{eqnarray*}
(x_{1} + v_{1}) \ast (x_{2} + v_{2})&:=& x_{1} \cdot x_{2} + (l(x_{1})v_{2} + r(x_{2})v_{1}),\cr
(\alpha_{1}\oplus\beta_{1})(x_{1} + v_{1})&:=& \alpha_{1}(x_{1}) + \beta_{1}(v_{1}),\cr (\alpha_{2}\oplus\beta_{2})(x_{1} + v_{1})&:=& \alpha_{2}(x_{1}) + \beta_{2}(v_{1}).
\end{eqnarray*}
\end{prop}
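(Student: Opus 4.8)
The plan is to verify directly the four defining identities of a BiHom-associative algebra, namely \eqref{aca0}, \eqref{aca0.0}, \eqref{aca0.00} and \eqref{aca}, for the quadruple $(A\oplus V,\ast,\alpha_1\oplus\beta_1,\alpha_2\oplus\beta_2)$, separating in each case the identity into its $A$-component and its $V$-component. Throughout I write a generic element of $A\oplus V$ as $x+v$ with $x\in A$ and $v\in V$, and unfold $\ast$ by $(x_1+v_1)\ast(x_2+v_2)=x_1\cdot x_2+\big(l(x_1)v_2+r(x_2)v_1\big)$.

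\textbf{Commutativity of the twists and multiplicativity.} First I would check $(\alpha_1\oplus\beta_1)\circ(\alpha_2\oplus\beta_2)=(\alpha_2\oplus\beta_2)\circ(\alpha_1\oplus\beta_1)$: on the $A$-part this reads $\alpha_1\alpha_2=\alpha_2\alpha_1$, which is \eqref{aca0} for $A$, and on the $V$-part it reads $\beta_1\beta_2=\beta_2\beta_1$, which holds since $(V,\beta_1,\beta_2)$ is a BiHom-module. Next, expanding $(\alpha_1\oplus\beta_1)\big((x_1+v_1)\ast(x_2+v_2)\big)$ and $\big((\alpha_1\oplus\beta_1)(x_1+v_1)\big)\ast\big((\alpha_1\oplus\beta_1)(x_2+v_2)\big)$, the $A$-components coincide by \eqref{aca0.0}, while the $V$-components reduce to $\beta_1(l(x_1)v_2)=l(\alpha_1(x_1))\beta_1(v_2)$ and $\beta_1(r(x_2)v_1)=r(\alpha_1(x_2))\beta_1(v_1)$, which are two of the bimodule axioms. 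Multiplicativity with respect to $\alpha_2\oplus\beta_2$ is handled identically, now using \eqref{aca0.00} and the two remaining ``$\beta_2$'' bimodule identities.

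\textbf{BiHom-associativity.} This is the substantial point. I would expand both sides of
\[
\big((\alpha_1\oplus\beta_1)(x_1+v_1)\big)\ast\big((x_2+v_2)\ast(x_3+v_3)\big)=\big((x_1+v_1)\ast(x_2+v_2)\big)\ast\big((\alpha_2\oplus\beta_2)(x_3+v_3)\big)
\]
using the definition of $\ast$. The $A$-components reduce to $\alpha_1(x_1)\cdot(x_2\cdot x_3)=(x_1\cdot x_2)\cdot\alpha_2(x_3)$, which is \eqref{aca} for $A$. The $V$-component of the left-hand side is
\[
l(\alpha_1(x_1))l(x_2)v_3+l(\alpha_1(x_1))r(x_3)v_2+r(x_2\cdot x_3)\beta_1(v_1),
\]
and that of the right-hand side is
\[
l(x_1\cdot x_2)\beta_2(v_3)+r(\alpha_2(x_3))l(x_1)v_2+r(\alpha_2(x_3))r(x_2)v_1.
\]
Matching term by term according to which of $v_1,v_2,v_3$ appears: the coefficients of $v_3$ agree by the bimodule identity $l(x\cdot y)\beta_2(v)=l(\alpha_1(x))l(y)v$, those of $v_2$ by $l(\alpha_1(x))r(y)v=r(\alpha_2(y))l(x)v$, and those of $v_1$ by $r(x\cdot y)\beta_1(v)=r(\alpha_2(y))r(x)v$. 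This closes the verification.

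\textbf{Expected obstacle.} There is no genuine conceptual difficulty here; the only care required is bookkeeping — correctly unfolding $\ast$ inside the nested products, and recognizing that the three summands of the $V$-component correspond exactly to the three ``mixed'' bimodule axioms while the $A$-component is precisely the BiHom-associativity of $(A,\cdot,\alpha_1,\alpha_2)$. The one mild trap is the asymmetry between the left action $l$ and the right action $r$ (and between $\beta_1$ and $\beta_2$), so I would keep the argument of $l$ in the left slot of $\ast$ and that of $r$ in the right slot throughout, and avoid accidentally symmetrizing.
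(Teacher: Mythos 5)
Your verification is correct: the $A$-component of the BiHom-associativity identity reduces to \eqref{aca} for $(A,\cdot,\alpha_1,\alpha_2)$, and the three $V$-summands match exactly the three ``action'' axioms of the bimodule ($l(x\cdot y)\beta_2(v)=l(\alpha_1(x))l(y)v$, $l(\alpha_1(x))r(y)v=r(\alpha_2(y))l(x)v$, $r(x\cdot y)\beta_1(v)=r(\alpha_2(y))r(x)v$), while the twist-commutation and multiplicativity checks use the remaining four compatibility axioms. The paper states this proposition without proof, but your direct expansion is precisely the expected argument and mirrors the computation the authors do carry out for the more general matched-pair result (Theorem \ref{matched ass}), of which this proposition is essentially the degenerate case where the second algebra is a trivial one acting trivially on $A$.
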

We denote such BiHom-associative algebra by $(A \oplus V, \ast, \alpha_{1} + \beta_{1}, \alpha_{2} + \beta_{2}),$ or $A \times_{l, r, \alpha_{1}, \alpha_{2}, \beta_{1}, \beta_{2}} V.$
\begin{ex}
Let $(A,\cdot,\alpha,\beta)$ be a BiHom-associative algebra. Then $(L,0,\alpha,\beta,A)$, $(0,R,\alpha,\beta,A)$ and $(L,R,\alpha,\beta,A)$ are bimodules of $(A,\cdot,\alpha,\beta)$, where $L(a)b=a\cdot b$ and $R(a)b=b\cdot a$ for all $a,b\in A$.
\end{ex}
\begin{thm}[\cite{HounkonnouHoundedjiSilvestrov:DoubleconstrbiHomFrobalg}]
\label{matched ass}
Let $(A,\cdot_A,\alpha_1,\alpha_2)$ and $(B,\cdot_B,\beta_1,\beta_2)$ be two BiHom-associative algebras. Suppose that there are linear maps $l_A,r_A:A\rightarrow gl(B)$
and $l_B,r_B:B\rightarrow gl(A)$ such that
$(l_A,r_A,\beta_1,\beta_2,B) \ \mbox{is a bimodule of}\ A,
(l_B,r_B,\alpha_1,\alpha_2,A) \ \mbox{is a bimodule of}\ B,$
and for any $x,y\in A,~a,b\in B$,
\begin{align}
\label{3}
     l_A(\alpha_1(x))(a\cdot_B b)&=l_A(r_B(a)x)\beta_2(b)+(l_A(x)a)\cdot_B\beta_2(b),
\\
\label{4}
     r_A(\alpha_2(x))(a\cdot_B b)&=r_A(l_B(b)x)\beta_1(a)+\beta_1(a)\cdot_B(r_A(x)b),
\\
\label{5}
     l_A(l_B(a)x)\beta_2(b)&+(r_A(x)a)\cdot_B\beta_2(b)\nonumber \\
     &-r_A(r_B(b)x)\beta_1(a) -\beta_1(a)\cdot_B(l_A(x)b)=0,
\\
\label{6}
     l_B(\beta_1(a))(x\cdot_A y)&=l_B(r_A(x)a)\alpha_2(y)+(l_B(a)x)\cdot_A\alpha_2(y),
\\
\label{7}
     r_B(\beta_2(a))(x\cdot_A y)&=r_B(l_A(y)a)\alpha_1(x)+\alpha_1(x)\cdot_A(r_B(a)y),
\\
\label{8}
     l_B(l_A(x)a)\alpha_2(y)&+(r_B(a)x)\cdot_A\alpha_2(y)\nonumber \\
     &-r_B(r_A(y)a)\alpha_1(x)-\alpha_1(x)\cdot_A(l_B(a)y)=0.
\end{align}
Then $(A,B,l_A,r_A,\beta_1,\beta_2,l_B,r_B,\alpha_1,\alpha_2)$ is called a matched pair of
BiHom-associative algebras. In this case, there is a BiHom-associative algebra structure on the direct sum
$A\oplus B$ of the underlying vector spaces of $A$ and $B$ given by
$$\begin{array}{llllll}
(x + a) \cdot (y + b)&:=&x \cdot_A y + (l_A(x)b + r_A(y)a)+a \cdot_B b + (l_B(a)y + r_B(b)x),\cr
(\alpha_{1}\oplus\beta_{1})(x + a)&:=&\alpha_{1}(x) + \beta_{1}(a),\cr (\alpha_{2}\oplus\beta_{2})(x + a)&:=&\alpha_{2}(x) + \beta_{2}(a).
\end{array}$$
\end{thm}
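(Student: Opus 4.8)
The plan is to verify directly that the quadruple $\big(A\oplus B,\,\cdot\,,\alpha_{1}\oplus\beta_{1},\alpha_{2}\oplus\beta_{2}\big)$, with $\cdot$ and the twisting maps as displayed, satisfies the four defining axioms \eqref{aca0}--\eqref{aca} of a BiHom-associative algebra. The commutativity $(\alpha_{1}\oplus\beta_{1})(\alpha_{2}\oplus\beta_{2})=(\alpha_{2}\oplus\beta_{2})(\alpha_{1}\oplus\beta_{1})$ is immediate from $\alpha_{1}\alpha_{2}=\alpha_{2}\alpha_{1}$ on $A$ and $\beta_{1}\beta_{2}=\beta_{2}\beta_{1}$ on $B$. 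For the multiplicativity axioms \eqref{aca0.0}--\eqref{aca0.00} one expands $(\alpha_{1}\oplus\beta_{1})\big((x+a)\cdot(y+b)\big)$ and $\big((\alpha_{1}\oplus\beta_{1})(x+a)\big)\cdot\big((\alpha_{1}\oplus\beta_{1})(y+b)\big)$: the diagonal pieces $x\cdot_{A}y$ and $a\cdot_{B}b$ are controlled by multiplicativity of $\alpha_{1}$ on $A$ and of $\beta_{1}$ on $B$, while the four cross terms $l_{A}(x)b$, $r_{A}(y)a$, $l_{B}(a)y$, $r_{B}(b)x$ are controlled by the ``commutation with the twisting maps'' axioms of the bimodules $(l_{A},r_{A},\beta_{1},\beta_{2},B)$ and $(l_{B},r_{B},\alpha_{1},\alpha_{2},A)$, e.g.\ $\beta_{1}(l_{A}(x)b)=l_{A}(\alpha_{1}(x))\beta_{1}(b)$. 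The axiom for $\alpha_{2}\oplus\beta_{2}$ is the same with every subscript $1$ replaced by $2$.

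The real content is the BiHom-associativity identity \eqref{aca} for $\cdot$. Since $\cdot$ is bilinear, it is enough to check $\big((\alpha_{1}\oplus\beta_{1})(u)\big)\cdot(v\cdot w)=(u\cdot v)\cdot\big((\alpha_{2}\oplus\beta_{2})(w)\big)$ when each of $u,v,w$ lies entirely in $A$ or entirely in $B$, i.e.\ in the $2^{3}=8$ ``pure'' cases. When $u,v,w\in A$ the identity collapses to \eqref{aca} for $A$, and when $u,v,w\in B$ to \eqref{aca} for $B$. In each of the remaining six mixed cases I would expand both sides using the definition of $\cdot$ and of the twisting maps and then read off the $A$-component and the $B$-component separately. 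One of the two resulting identities is always a structural (bimodule) axiom for $l_{A},r_{A}$ or for $l_{B},r_{B}$ --- for instance the case $u=x,\,v=y\in A$, $w=c\in B$ yields $l_{A}(x\cdot_{A}y)\beta_{2}(c)=l_{A}(\alpha_{1}(x))l_{A}(y)c$ on the $B$-side --- while the other is exactly one of the compatibility conditions \eqref{3}--\eqref{8}, the same case producing $r_{B}(\beta_{2}(c))(x\cdot_{A}y)=r_{B}(l_{A}(y)c)\alpha_{1}(x)+\alpha_{1}(x)\cdot_{A}(r_{B}(c)y)$ on the $A$-side, which is \eqref{7}. The three cases with two arguments in $A$ account for \eqref{6}, \eqref{7}, \eqref{8}, the three cases with two arguments in $B$ account for \eqref{3}, \eqref{4}, \eqref{5}, and each of \eqref{3}--\eqref{8} is used exactly once, with nothing left over.

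I expect the only genuine obstacle to be organizational: in each mixed case both sides are sums of up to six terms scattered between $A$ and $B$, and one must attach to every term the correct operation among $\cdot_{A},\cdot_{B},l_{A},r_{A},l_{B},r_{B}$ and the correct twisting map among $\alpha_{1},\alpha_{2},\beta_{1},\beta_{2}$ before the cancellations become transparent; a single misplaced index destroys the match. It is therefore worth recording first the complete list of axioms of the two bimodules, with $\alpha_{1}$ versus $\alpha_{2}$ and $\beta_{1}$ versus $\beta_{2}$ in their correct slots, since precisely those identities are what gets invoked both for the ``like-typed'' components above and for multiplicativity. No idea beyond these bookkeeping-heavy verifications is needed, which is consistent with the result being quoted from \cite{HounkonnouHoundedjiSilvestrov:DoubleconstrbiHomFrobalg}.
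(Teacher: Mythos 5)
Your proposal is correct and follows essentially the same route as the paper: a direct verification of the BiHom-associativity identity on $A\oplus B$, matching the resulting terms against the bimodule axioms and the compatibility conditions \eqref{3}--\eqref{8}. The paper expands the full triple product of general elements $(x+a),(y+b),(z+c)$ in one long display and then cites \eqref{aca} and \eqref{3}--\eqref{8}, whereas your reduction by trilinearity to the eight pure cases is the same computation organized more transparently (and your accounting of which case produces which condition is accurate); you also make explicit the verification of \eqref{aca0}--\eqref{aca0.00}, which the paper leaves implicit.
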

\begin{proof}
For any $x,y,z\in A$ and $a,b,c\in B$,
\begin{align*}
&(\alpha_1+\beta_1)(x+a)\cdot((y+b)\cdot(z+c))\\
&\quad =(\alpha_1(x)+\beta_1(a))[y\cdot_A z+l_B(b)z+r_B(c)y+b\cdot c+l_A(y)c+r_A(z)b)\\
&\quad =\alpha_1(x)\cdot_A(y\cdot_A z)+\alpha_1(x)\cdot_A l_B(b)z+\alpha_1(x)\cdot_A r_B(c)y+l_B(\beta_1(a))(y\cdot_A z)\\
&\quad \quad +l_B(\beta_1(a))l_B(b)z+l_B(\beta_1(a))r_B(c)y+r_B(b\cdot_B c)\alpha_1(x)+r_B(l_A(y)c)\alpha_1(x)\\
&\quad \quad +r_B(r_A(z)b)\alpha_1(x)+\beta_1(a)\cdot_B(b\cdot_B c)+\beta_1(a)\cdot_B l_A(y)c+\beta_A(\alpha_1(x))l_A(y)c\\
&\quad \quad +l_A(\alpha_1(x))r_A(z)b+r_A(y\cdot_A z)\beta_1(a)+r_A(l_A(b)z)\beta_1(a)+r_A(r_B(c)y)\beta_1(a),
\\
&((x+a)\cdot(y+b))\cdot(\alpha_2+\beta_2)(z+c)\\
&\quad=(x\cdot_A y+l_B(a)y+r_B(b)x+a\cdot_B b+l_A(x)b+r_A(y)a)\cdot(\alpha_2(z)+\beta_2(c))\\
&\quad=(x\cdot_A y)\cdot_A\alpha_2(z)+l_B(a)y\cdot_A\alpha_2(z)+r_B(b)x\cdot_A\alpha_2(z)+l_B(a\cdot_B b)\alpha_2(z)\\
&\quad\quad+l_B(l_A(x)b)\alpha_2(z)+l_B(r_A(y)a)\alpha_2(z)+r_B(\beta_2(c))(x\cdot_A y)+r_A(\beta_2(c))l_B(a)y\\
&\quad\quad +r_B(\beta_2(c))r_B(b)x+(a\cdot_B b)
\cdot_B\beta_2(c)+(l_A(x)b)\cdot_B\beta_2(c)+(r_A(y)a)\cdot_B\beta_2(c)\\
&\quad\quad+r_A(\alpha_2(z))(a\cdot_B b)+r_A(\alpha_2(z))(l_A(x)b)
+r_A(\alpha_2(z))(r_A(y)a)+l_A(x\cdot_A y)\beta_2(c)\\
&\quad\quad+l_A(l_B(a)y)\beta_2(c)+(r_B(b)x)\beta_2(c).
\end{align*}
Then \eqref{aca} and \eqref{3}-\eqref{8} yield
\begin{equation*}
(\alpha_1+\beta_1)(x+a)\cdot((y+b)\cdot(z+c))=((x+a)\cdot(y+b))\cdot(\alpha_2+\beta_2)(z+c).
\qedhere \end{equation*}
\end{proof}
We denote this BiHom-associative algebra by $A\bowtie^{l_A,r_A,\beta_1,\beta_2}_{l_B,r_B,\alpha_1,\alpha_2}B$.\\
\begin{defn}
A BiHom-left-symmetric algebra is a quadruple $(S, \ast, \alpha, \beta)$ consisting of a vector space $S$ on which the operation $\ast: S\otimes S\rightarrow S$ and $\alpha, \beta: S\rightarrow S$ are linear maps satisfying, for all $x, y, z\in S$,
\begin{align}
\alpha\circ\beta &=\beta\circ\alpha,\label{clsa0}\\
\alpha(x\ast y) &=\alpha(x)\ast\alpha(y),\label{clsa0.0}\\
\beta(x\ast y) &=\beta(x)\ast\beta(y),\label{clsa0.00}\\
(\beta(x)\ast \alpha(y))\ast\beta(z)&-\alpha\beta(x)\ast(\alpha(y)\ast z) \nonumber \\
&=(\beta(y)\ast \alpha(x))\ast\beta(z)-\alpha\beta(y)\ast(\alpha(x)\ast z).
\label{clsa}
\end{align}
 \end{defn}
\begin{defn}
Let $(S, \ast, \alpha_{1}, \alpha_{2})$ be a BiHom-left-symmetric algebra, and $(V, \beta_{1}, \beta_{2})$ be a BiHom-module. Let $ l, r: S \rightarrow gl(V) $ be two linear maps. The quintuple $(l, r, \beta_{1}, \beta_{2}, V)$ is called a bimodule of $S$ if for all $ x, y \in  S, v \in V $,
\begin{align*}
l(\alpha_{2}(x)\ast\alpha_{1}(y))\beta_{2}(v)&-l(\alpha_{1}\alpha_{2}(x))l(\alpha_{1}(y))v \\
&=l(\alpha_{2}(y)\ast\alpha_{1}(x))\beta_{2}(v)-l(\alpha_{1}\alpha_{2}(y))l(\alpha_{1}(x))v,\\
r(\alpha_{2}(x))r(\alpha_{1}(y))\beta_{2}(v)&-r(\alpha_{1}(y)\ast x)\alpha_{1}\alpha_{2}(v)\\
&=r(\alpha_{2}(x))l(\alpha_{2}(y))\beta_{1}(v)-l(\alpha_{1}\alpha_{2}(y))r(x)\beta_{1}(v),\\
\beta_{1}(l(x)v)= l(\alpha_{1}(x))\beta_{1}(v),& \quad \beta_{1}(r(x)v)= r(\alpha_{1}(x))\beta_{1}(v),\\
\beta_{2}(l(x)v)= l(\alpha_{2}(x))\beta_{2}(v),& \quad \beta_{2}(r(x)v)= r(\alpha_{2}(x))\beta_{2}(v).
\end{align*}
\end{defn}
\begin{prop}
Let $(l, r, \beta_{1}, \beta_{2}, V)$ be a bimodule of a BiHom-left-symmetric algebra $(S, \ast, \alpha_{1}, \alpha_{2})$. Then, the direct sum $S \oplus V$ of vector spaces is turned into a BiHom-left-symmetric algebra  by defining multiplication in $S \oplus V $ by
\begin{eqnarray*}
&&(x_{1} + v_{1}) \ast' (x_{2} + v_{2}):=x_{1} \ast x_{2} + (l(x_{1})v_{2} + r(x_{2})v_{1}),\cr
&&(\alpha_{1}\oplus\beta_{1})(x_{1} + v_{1}):=\alpha_{1}(x_{1}) + \beta_{1}(v_{1}),\cr&&(\alpha_{2}\oplus\beta_{2})(x_{1} + v_{1}):=\alpha_{2}(x_{1}) + \beta_{2}(v_{1}),
\end{eqnarray*}
for all $ x_{1}, x_{2} \in  S, v_{1}, v_{2} \in V$.
\end{prop}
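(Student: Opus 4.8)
The plan is to verify directly that the operation $\ast'$ together with the twisting maps $\alpha_1\oplus\beta_1$ and $\alpha_2\oplus\beta_2$ satisfies the four defining axioms \eqref{clsa0}--\eqref{clsa} of a BiHom-left-symmetric algebra on $S\oplus V$. The first three axioms are easy: $(\alpha_1\oplus\beta_1)\circ(\alpha_2\oplus\beta_2) = (\alpha_2\oplus\beta_2)\circ(\alpha_1\oplus\beta_1)$ follows from \eqref{clsa0} for $S$, from $\alpha_V\beta_V=\beta_V\alpha_V$ for the BiHom-module $V$, and from the compatibility conditions $\beta_i(l(x)v)=l(\alpha_i(x))\beta_i(v)$ and $\beta_i(r(x)v)=r(\alpha_i(x))\beta_i(v)$ built into the definition of a bimodule; multiplicativity of $\alpha_1\oplus\beta_1$ and $\alpha_2\oplus\beta_2$ with respect to $\ast'$ reduces, component by component, to \eqref{clsa0.0}--\eqref{clsa0.00} for $S$ and again to those same compatibility conditions relating $\beta_1,\beta_2$ with $l,r$.

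**The main axiom.** The substantive step is the left-symmetry identity \eqref{clsa}. I would take arbitrary elements $x_1+v_1,\ x_2+v_2,\ x_3+v_3 \in S\oplus V$, expand both
$$\bigl(((\alpha_1\oplus\beta_1)(x_1+v_1))\ast'((\alpha_2\oplus\beta_2)(x_2+v_2))\bigr)\ast'(\alpha_1\oplus\beta_1)(x_3+v_3)$$
and $\bigl((\alpha_1\oplus\beta_1)(\alpha_2\oplus\beta_2)(x_1+v_1)\bigr)\ast'\bigl((\alpha_2\oplus\beta_2)(x_2+v_2)\ast' x_3\bigr)$ using the formula for $\ast'$, and then do the same with the roles of the first two arguments swapped. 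Each such expansion splits into an $S$-component and a $V$-component. The $S$-components are governed purely by the left-symmetry axiom \eqref{clsa} for $S$, so their contribution to the difference vanishes. For the $V$-component one collects, on each side, the terms of the shape $l(\cdots)l(\cdots)v_3$, $l(\cdots)r(\cdots)v_2$, and $r(\cdots)r(\cdots)v_1$ (together with terms where a twisting map has been absorbed into an argument via \eqref{clsa0.0}--\eqref{clsa0.00}). Matching the coefficients of $v_3$ uses precisely the first bimodule axiom, matching the coefficients of $v_2$ (mixed $l$–$r$ terms) uses the second bimodule axiom, and matching the coefficients of $v_1$ uses the second bimodule axiom again after exploiting the symmetry of the computation under the $1\leftrightarrow 2$ swap. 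Summing these, the $V$-components of the two sides agree, which completes the verification.

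**Expected obstacle.** The conceptual content is minimal — everything is a direct, if lengthy, expansion — so the real work is bookkeeping: keeping the many $l,r$ terms organized, making sure every $\alpha_1,\alpha_2$ lands in the correct slot, and correctly pairing the raw expansion with the exact form in which the two bimodule axioms are stated (which already have $\alpha_1,\alpha_2$ inserted in specific places). The mild subtlety is that the bimodule axioms for a BiHom-left-symmetric algebra are written in a "pre-twisted" form, so before applying them one must use \eqref{clsa0.0}--\eqref{clsa0.00} and the $\beta$–$l$/$\beta$–$r$ compatibilities to bring the collected terms into matching shape; once that normalization is done the cancellation is forced. I would present the proof by displaying the two expansions (as is done for the associative case in the proof of Theorem~\ref{matched ass}) and then noting that equality of $V$-components is exactly the bimodule axioms, equality of $S$-components is exactly \eqref{clsa} for $S$.
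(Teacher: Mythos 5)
The paper states this proposition without proof, so there is no argument of record to compare against; your direct, component-wise verification is the natural one and matches the computational style of the proofs the paper does supply (e.g.\ Theorem~\ref{matched ass}). Your strategy is correct: the $S$-component of the left-symmetry identity reduces to \eqref{clsa} for $S$, the coefficient of $v_3$ reduces to the first bimodule axiom (with $(x,y,v)=(x_1,x_2,v_3)$), and the coefficients of $v_2$ and $v_1$ each reduce to the second bimodule axiom (with $(x,y,v)=(x_3,x_1,v_2)$ and $(x_3,x_2,v_1)$ respectively); the multiplicativity and commutation axioms follow as you indicate, though the $\beta_i$--$l$/$\beta_i$--$r$ compatibilities are needed only for multiplicativity, not for the commutation of the twisting maps. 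Two cautions. First, in your displayed expansion the twisting maps sit in the wrong slots: since the algebra is $(S,\ast,\alpha_1,\alpha_2)$, axiom \eqref{clsa} instantiates with $\alpha\mapsto\alpha_1$, $\beta\mapsto\alpha_2$, so the quantity to be shown symmetric in $X\leftrightarrow Y$ is
$\bigl((\alpha_2\oplus\beta_2)(X)\ast'(\alpha_1\oplus\beta_1)(Y)\bigr)\ast'(\alpha_2\oplus\beta_2)(Z)-(\alpha_1\oplus\beta_1)(\alpha_2\oplus\beta_2)(X)\ast'\bigl((\alpha_1\oplus\beta_1)(Y)\ast'Z\bigr)$,
whereas you wrote $\alpha_1\oplus\beta_1$ on the outer factors and $\alpha_2\oplus\beta_2$ on the inner one (and omitted $v_3$ from the last factor); since you yourself flag this bookkeeping as the main hazard, it should be fixed before expanding. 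Second, the matching of the $v_1,v_2$ terms works only after correcting an evident typo in the paper's second bimodule axiom, where $r(\alpha_1(y)\ast x)\alpha_1\alpha_2(v)$ must be read as $r(\alpha_1(y)\ast x)\beta_1\beta_2(v)$, because $\alpha_1\alpha_2$ does not act on $V$. With these adjustments the cancellation goes through exactly as you describe.
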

We denote such a biHom-left-symmetric algebra by $(S\oplus V, \ast, \alpha_{1} + \beta_{1}, \alpha_{2} + \beta_{2}),$ or by
$S\times_{l, r, \alpha_{1}, \alpha_{2}, \beta_{1}, \beta_{2}} V.$

\begin{ex} Let $(S,\ast,\alpha,\beta)$ be a BiHom-left-symmetric algebra. Then $(L,0,\alpha,\beta,S)$, $(0,R,\alpha,\beta,S)$ and $(L,R,\alpha,\beta,S)$ are bimodules of $(S,\ast,\alpha,\beta)$, where $L(a)b=a\ast b$ and $R(a)b=b\ast a$ for all $a,b\in S$.
\end{ex}
\begin{thm}[\cite{Laraiedh1:2021:BimodmtchdprsBihomprepois}]
Let $(A,\ast_A,\alpha_1,\alpha_2)$ and $(B,\ast_{B},\beta_1,\beta_2)$ be two  BiHom-left-symmetric algebras. Suppose that there are linear maps $l_{\ast_A},r_{\ast_A}:A\rightarrow gl(B)$
and $l_{\ast_B},r_{\ast_B}:B\rightarrow gl(A)$ such that
\begin{align*}
&(l_{\ast_A},r_{\ast_A},\beta_1,\beta_2,B) \ \mbox{is a bimodule of}\ A, \\
&(l_{\ast_B},r_{\ast_B},\alpha_1,\alpha_2,A) \ \mbox{ is a bimodule of} \ B
\end{align*}
and, for any $x,y\in A,~a,b\in B$ with
\begin{align*}
\{\alpha_2(x),\alpha_1(y)\}_A&=\alpha_2(x)\ast_A\alpha_1(y)-\alpha_2(y)\ast_A\alpha_1(x),\\ (\rho_A\circ\alpha_2)\beta_1&=(l_{\ast_A}\circ\alpha_2)\beta_1-(r_{\ast_A}\circ\alpha_1)\beta_2,\\
\{\beta_2(a),\beta_1(b)\}_B&=\beta_2(a)\ast_B\beta_1(b)-\beta_2(b)\ast_B\beta_1(a), \\ (\rho_B\circ\beta_2)\alpha_1&=(l_{\ast_B}\circ\beta_2)\alpha_1-(r_{\ast_B}\circ\beta_1)\alpha_2,
\end{align*}
the following equalities hold:
\begin{align} %\label{Lie11}
    &r_{\ast_A}(\alpha_2(x))\{\beta_2(a),\beta_1(b)\}_B=r_{\ast_A}(l_{\ast_B}(\beta_1(b))x)\beta_1\beta_2(a)\nonumber\\
    &\quad -r_{\ast_A}(l_{\ast_B}(\beta_1(a)x)\beta_1\beta_2(b)+\beta_1\beta_2(a)\ast_Br_{\ast_A}(x)\beta_1(b)\nonumber\\
    &\quad -\beta_1\beta_2(b)\ast_B r_{\ast_A}(x)\beta_1(a),\label{Lie11}\\
    &l_{\ast_A}(\alpha_1\alpha_2(x))(\beta_1(a)\ast_B b)
    =(\rho_A(\alpha_2(x))\beta_1(a))\ast_B\beta_{2}(b)\nonumber\\
    & \quad-l_{\ast_A}(\rho_B(\beta_2(a))\alpha_1(x))\beta_2(b)
    +\beta_1\beta_2(a)\ast_B(l_{\ast_A}(\alpha_1(x))b)\nonumber\\
    &\quad+r_{\ast_A}(r_{\ast_B}(b)\alpha_1(x))\beta_1\beta_2(a),\label{Lie12}\\
    &r_{\ast_B}(\beta_2(a))\{\alpha_2(x),\alpha_1(y)\}_A=
    r_{\ast_B}(l_{\ast_A}(\alpha_1(y))a)\alpha_1\alpha_2(x)\nonumber\\
    &\quad-r_{\ast_B}(l_{\ast_A}(\alpha_1(x)a)\alpha_1\alpha_2(y)+\alpha_1\alpha_2(x)\ast_A r_{\ast_B}(a)\alpha_1(y)\nonumber\\
    &\quad-\alpha_1\alpha_2(y)\ast_A r_{\ast_B}(a)\alpha_1(x),\label{Lie13}\\
    &l_{\ast_B}(\beta_1\beta_2(a))(\alpha_1(x)\ast_A y) =(\rho_B(\beta_2(a))\alpha_1(x))\ast_A\alpha_{2}(y)\nonumber\\
    &\quad-l_{\ast_B}(\rho_A(\alpha_2(x))\beta_1(a))\alpha_2(y)+\alpha_1\alpha_2(x)\ast_A(l_{\ast_B}(\beta_1(a))y)\nonumber\\
    &\quad+r_{\ast_B}(r_{\ast_A}(y)\beta_1(a))\alpha_1\alpha_2(x).\label{Lie14}
\end{align}
Then for $(A,B,l_{\ast_A},r_{\ast_A},\beta_1,\beta_2,l_{\ast_B},r_{\ast_B},\alpha_1,\alpha_2)$, called a matched pair of BiHom-left-symmetric algebras, there exists a BiHom-left-symmetric algebra structure on the vector space $A\oplus B$ of the underlying vector spaces of $A$ and $B$ given by
$$\begin{array}{llllll}
(x + a) \ast (y + b)&:=&x \ast_A y + (l_{\ast_A}(x)b + r_{\ast_A}(y)a)+a \ast_B b + (l_{\ast_B}(a)y + r_{\ast_B}(b)x),\cr
(\alpha_{1}\oplus\beta_{1})(x + a)&:=&\alpha_{1}(x) + \beta_{1}(a),\cr (\alpha_{2}\oplus\beta_{2})(x + a)&:=&\alpha_{2}(x) + \beta_{2}(a).
\end{array}$$
\end{thm}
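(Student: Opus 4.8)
The plan is to verify directly that the quadruple $(A\oplus B,\ast,\alpha_1\oplus\beta_1,\alpha_2\oplus\beta_2)$ satisfies the four defining axioms \eqref{clsa0}--\eqref{clsa} of a BiHom-left-symmetric algebra, following the same strategy as in the proof of Theorem \ref{matched ass}: expand everything in terms of the component operations $\ast_A,\ast_B$ and the four action maps, split each side into its $A$-valued and $B$-valued parts, and match term by term.

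The axioms \eqref{clsa0}--\eqref{clsa0.00} are the routine part. Commutativity of $\alpha_1\oplus\beta_1$ and $\alpha_2\oplus\beta_2$ is immediate from $\alpha_1\alpha_2=\alpha_2\alpha_1$ and $\beta_1\beta_2=\beta_2\beta_1$. For multiplicativity, expanding $(\alpha_1\oplus\beta_1)\big((x+a)\ast(y+b)\big)$ gives $\alpha_1(x\ast_A y)+\beta_1(a\ast_B b)$ together with the four terms $\beta_1(l_{\ast_A}(x)b)$, $\beta_1(r_{\ast_A}(y)a)$, $\alpha_1(l_{\ast_B}(a)y)$, $\alpha_1(r_{\ast_B}(b)x)$; comparing with $(\alpha_1(x)+\beta_1(a))\ast(\alpha_1(y)+\beta_1(b))$ these agree because $\ast_A,\ast_B$ are multiplicative (i.e. \eqref{clsa0.0}--\eqref{clsa0.00} hold in $A$ and in $B$) and because the last four relations in each of the two bimodule definitions say precisely that $\beta_1 l_{\ast_A}(x)=l_{\ast_A}(\alpha_1(x))\beta_1$, $\beta_1 r_{\ast_A}(y)=r_{\ast_A}(\alpha_1(y))\beta_1$, $\alpha_1 l_{\ast_B}(a)=l_{\ast_B}(\beta_1(a))\alpha_1$ and $\alpha_1 r_{\ast_B}(b)=r_{\ast_B}(\beta_1(b))\alpha_1$. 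The identity for $\alpha_2\oplus\beta_2$ is the same computation with the subscripts $1$ and $2$ interchanged.

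The substance of the proof is \eqref{clsa}. Put $X=x+a$, $Y=y+b$, $Z=z+c$ with $x,y,z\in A$, $a,b,c\in B$, write $\tilde\alpha=\alpha_1\oplus\beta_1$, $\tilde\beta=\alpha_2\oplus\beta_2$, and set $\mathrm{as}(X,Y,Z):=\big(\tilde\beta(X)\ast\tilde\alpha(Y)\big)\ast\tilde\beta(Z)-\big(\tilde\alpha\tilde\beta(X)\big)\ast\big(\tilde\alpha(Y)\ast Z\big)$, so that \eqref{clsa} for $A\oplus B$ is the assertion $\mathrm{as}(X,Y,Z)=\mathrm{as}(Y,X,Z)$. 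I would expand $\mathrm{as}(X,Y,Z)$ completely, using the formula for $\ast$ and the two twisting maps; it separates into an $A$-valued part (all summands of the form $u\ast_A v$ with $u,v\in A$, together with all $l_{\ast_B}$- and $r_{\ast_B}$-terms) and a $B$-valued part (the summands $p\ast_B q$ with $p,q\in B$ and the $l_{\ast_A}$- and $r_{\ast_A}$-terms). Doing the same for $\mathrm{as}(Y,X,Z)$ and subtracting, the $A$-valued component of $\mathrm{as}(X,Y,Z)-\mathrm{as}(Y,X,Z)$ splits off, first, the group of pure $\ast_A$-terms, which is exactly the left-hand minus right-hand side of \eqref{clsa} for $(A,\ast_A,\alpha_1,\alpha_2)$ and so vanishes; and, second, a remainder involving $l_{\ast_B},r_{\ast_B}$ and products by elements of $A$, which — after using the bimodule axioms of $(l_{\ast_B},r_{\ast_B},\alpha_1,\alpha_2,A)$ to put the iterated actions into normal form, and recognizing that the antisymmetrization in $X\leftrightarrow Y$ produces the bracket $\{\alpha_2(x),\alpha_1(y)\}_A=\alpha_2(x)\ast_A\alpha_1(y)-\alpha_2(y)\ast_A\alpha_1(x)$ and the map $\rho_B$ — is precisely the content of \eqref{Lie13} and \eqref{Lie14}. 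The $B$-valued component is handled symmetrically: it reduces to \eqref{clsa} for $(B,\ast_B,\beta_1,\beta_2)$ plus a remainder that is killed by the bimodule axioms of $(l_{\ast_A},r_{\ast_A},\beta_1,\beta_2,B)$ together with \eqref{Lie11} and \eqref{Lie12}, after the analogous rewriting with $\{\beta_2(a),\beta_1(b)\}_B$ and $\rho_A$. Hence $\mathrm{as}(X,Y,Z)-\mathrm{as}(Y,X,Z)=0$, which is \eqref{clsa}.

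The main obstacle is bookkeeping rather than any conceptual point. Expanding $\mathrm{as}(X,Y,Z)$ produces a few dozen terms, so $\mathrm{as}(X,Y,Z)-\mathrm{as}(Y,X,Z)$ has twice as many, and each is decorated by its own composition of $\alpha_1,\alpha_2,\beta_1,\beta_2$ (for instance $\alpha_1\alpha_2$ on some factors); the crux is to sort these correctly into the four classes ``cancels by \eqref{clsa} for $A$'', ``cancels by \eqref{clsa} for $B$'', ``cancels by a bimodule axiom'', ``cancels by one of \eqref{Lie11}--\eqref{Lie14}'', without slipping on the twists. There is no hidden difficulty, since the conditions \eqref{Lie11}--\eqref{Lie14} encode exactly the matched-pair compatibility of the underlying BiHom-Lie algebras — which is what the labels allude to — and were chosen to absorb precisely the mixed terms that survive. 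I would therefore present the proof by displaying the two full expansions of $\mathrm{as}(X,Y,Z)$ and $\mathrm{as}(Y,X,Z)$ and then indicating, group by group, which identity annihilates each, exactly as in the proof of Theorem \ref{matched ass}.
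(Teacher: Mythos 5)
Your proposal is correct and follows essentially the same route as the paper: the paper's own proof consists of the single remark that it is ``obtained in a similar way as for Theorem \ref{matched ass}'', i.e.\ by expanding the left-symmetry identity on $A\oplus B$, separating the $A$- and $B$-valued components, and cancelling the pure terms by \eqref{clsa} in $A$ and $B$ and the mixed terms by the bimodule axioms together with \eqref{Lie11}--\eqref{Lie14}. Your write-up actually spells out this bookkeeping in more detail than the paper does.
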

\begin{proof}
The proof is obtained in a similar way as for Theorem \ref{matched ass}.
\end{proof}
We denote this BiHom-left-symmetric algebra by $A\bowtie^{l_{\ast_A},r_{\ast_A},\beta_1,\beta_2}_{l_{\ast_B},r_{\ast_B},\alpha_1,\alpha_2}B$.\\
\subsection{BiHom-left-symmetric dialgebra}
\begin{defn}\label{gls}
A BiHom-left-symmetric dialgebra is a linear space $S$ equipped with a two bilinear products $\dashv,\vdash : S\times S\rightarrow S$ and two linear maps $\alpha,\beta: S\rightarrow S$ satisfying,
for all $x, y, z\in S$,
\begin{align}
\alpha\circ\beta &=\beta\circ\alpha,\label{als0}\\
\alpha(x\dashv y) &=\alpha(x)\dashv\alpha(y), \alpha(x\vdash y)=\alpha(x)\vdash\alpha(y),\label{als0.0}\\
\beta(x\dashv y) &=\beta(x)\dashv\beta(y), \beta(x\vdash y)=\beta(x)\vdash\beta(y),\label{als0.00}\\
\alpha(x)\dashv(y\dashv z) &=\alpha(x)\dashv(y\vdash z),\label{als1}\\
(x\vdash y)\vdash\beta(z) &=(x\dashv y)\vdash\beta(z),\label{als2}\\
\alpha\beta(x)\dashv(\alpha(y)\dashv z)&-(\beta(x)\dashv \alpha(y))\dashv \beta(z) \nonumber\\
&=\alpha\beta(y)\vdash(\alpha(x)\dashv z)-(\beta(y)\vdash \alpha(x))\dashv \beta(z),
\label{als3}\\
\alpha\beta(x)\vdash(\alpha(y)\vdash z)&-(\beta(x)\vdash \alpha(y))\vdash \beta(z) \nonumber\\
&=\alpha\beta(y)\vdash(\alpha(x)\vdash z)-(\beta(y)\vdash \alpha(x))\vdash \beta(z).
\label{als4}
\end{align}
\end{defn}
\begin{ex}
Any BiHom-associative algebra $(A,\mu,\alpha,\beta)$ is a BiHom-left-symmetric
dialgebra with $\dashv=\vdash=\mu$.
\end{ex}
\begin{rmk}
 Relation \eqref{als4} means that $(S, \vdash, \alpha,\beta)$ is a BiHom-left-symmetric algebra.
 So, any BiHom-left-symmetric dialgebra is a BiHom-left-symmetric algebra.
\end{rmk}
\begin{thm}\label{isma}
 Given two BiHom-left-symmetric
dialgebras $(S_1,\dashv_1, \vdash_1,\alpha_1,\beta_{1})$ and $(S_2,\dashv_2, \vdash_2,\alpha_{2},\beta_{2}),$ there is a BiHom-left-symmetric
dialgebra $(S_1\oplus S_2,\dashv=\dashv_1+\dashv_2, \vdash=\vdash_1+\vdash_2, \alpha_{1}+\alpha_{2},\beta_{1}+\beta_{2}),$ where the bilinear maps $\dashv,\vdash:(S_1\oplus S_2)^{\times 2}\longrightarrow (S_1\oplus S_2)$ are given for all $a_1,a_2\in S_1$ and $b_1,b_2\in S_2$ by
 \begin{align*}
 (a_1+b_1)\dashv(a_2+b_2) & :=a_1\dashv_1a_2+b_1\dashv_2 b_2,\\
  (a_1+b_1)\vdash(a_2+b_2) & :=a_1\vdash_1a_2+b_1\vdash_2 b_2,
 \end{align*}
 and the linear maps $\alpha_{1}+\alpha_{2},~\beta_{1}+\beta_{2}: (S_1\oplus S_2)\longrightarrow (S_1\oplus S_2)$ are given for all $(a,b)\in S_1\times S_2$ by
\begin{align*}
(\alpha_1+\alpha_2)(a+b)&:= \alpha_1(a)+\alpha_2(b),\\
(\beta_1+\beta_2)(a+b)&:= \beta_1(a)+\beta_2(b).
\end{align*}
 \end{thm}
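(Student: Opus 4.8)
The plan is to verify directly that the data $(S_1\oplus S_2,\dashv,\vdash,\alpha_1+\alpha_2,\beta_1+\beta_2)$ satisfies all of the axioms \eqref{als0}--\eqref{als4} of Definition \ref{gls}. Write $\alpha=\alpha_1+\alpha_2$ and $\beta=\beta_1+\beta_2$ for brevity. Since $\alpha$, $\beta$, $\dashv$ and $\vdash$ on $S_1\oplus S_2$ are all defined componentwise, the commutation $\alpha\circ\beta=\beta\circ\alpha$ of \eqref{als0} reduces to $\alpha_i\circ\beta_i=\beta_i\circ\alpha_i$ on each summand, and the multiplicativity relations \eqref{als0.0} and \eqref{als0.00} follow immediately from the corresponding relations in $S_1$ and $S_2$; for example $\alpha\bigl((a_1+b_1)\dashv(a_2+b_2)\bigr)=\alpha_1(a_1\dashv_1 a_2)+\alpha_2(b_1\dashv_2 b_2)=\alpha(a_1+b_1)\dashv\alpha(a_2+b_2)$, with no cross terms.

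For the four structural identities \eqref{als1}--\eqref{als4} I would take arbitrary $x=a_1+b_1$, $y=a_2+b_2$, $z=a_3+b_3$ with $a_k\in S_1$, $b_k\in S_2$, substitute into both sides and expand using the definitions of $\dashv,\vdash,\alpha,\beta$ on $S_1\oplus S_2$. Every monomial then splits as a sum of an $S_1$-term and an $S_2$-term, e.g.
\[
\alpha\beta(x)\dashv(\alpha(y)\dashv z)=\bigl(\alpha_1\beta_1(a_1)\dashv_1(\alpha_1(a_2)\dashv_1 a_3)\bigr)+\bigl(\alpha_2\beta_2(b_1)\dashv_2(\alpha_2(b_2)\dashv_2 b_3)\bigr),
\]
and analogously for each of the remaining terms in \eqref{als1}--\eqref{als4}. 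Collecting the $S_1$-components of a given identity reproduces exactly that identity for $(S_1,\dashv_1,\vdash_1,\alpha_1,\beta_1)$, and the $S_2$-components reproduce it for $(S_2,\dashv_2,\vdash_2,\alpha_2,\beta_2)$; since both hold by hypothesis, the identity holds on $S_1\oplus S_2$. In the write-up I would display this splitting explicitly for \eqref{als3} (which carries the most terms) and note that \eqref{als1}, \eqref{als2} and \eqref{als4} are treated the same, simpler way.

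There is no genuine obstacle here: the only care needed is bookkeeping of the several two-fold products in \eqref{als3} and checking that no $S_1$--$S_2$ cross terms can occur, which is automatic since $\dashv$ and $\vdash$ never mix the two summands. Alternatively, one can bypass the computation entirely by remarking that a BiHom-left-symmetric dialgebra is a BiHom-$X$ algebra (with $n=2$ and $X$ the left-symmetric dialgebra identities), so the statement is a special case of Proposition \ref{bylv}; but exhibiting the componentwise expansion of \eqref{als3} keeps the proof self-contained.
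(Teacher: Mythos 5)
Your proposal is correct and takes essentially the same route as the paper: a direct componentwise verification in which each axiom of Definition \ref{gls} splits into its $S_1$- and $S_2$-components with no cross terms, the paper choosing to display \eqref{als1} explicitly rather than \eqref{als3}. Your closing remark that the result also follows from Proposition \ref{bylv} is a valid observation, but the main argument coincides with the paper's.
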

\begin{proof}
We prove only the axiom \eqref{als1}, as others are proved similarly.
For any $a_{1},b_{1},c_{1}\in S_1$ and $a_2, b_2, c_2\in S_2$,
\begin{align*}
(\alpha_1+\alpha_2)(a_1+a_2) &\dashv((b_1+b_2)\vdash(c_1+c_2)) \\
&=(\alpha_1(a_1)+\alpha_2(a_2))\dashv((b_1+b_2)\vdash(c_1+c_2))\\
&=\alpha_1(a_1)\dashv_1(b_1\vdash_1 c_1)+\alpha_2(a_2)\dashv_2(b_2\vdash_2 c_2)\\
&=\alpha_1(a_1)\dashv_1(b_1\dashv_1 c_1)+\alpha_2(a_2)\dashv_2(b_2\dashv_2 c_2)\\
&=(\alpha_1(a_1)+\alpha_2(a_2))\dashv((b_1+b_2)\dashv(c_1+c_2))\\
&=(\alpha_1+\alpha_2)(a_1+a_2)\dashv((b_1+b_2)\dashv(c_1+c_2)).
\qedhere \end{align*}
\end{proof}
\begin{prop}
Let $(S, \dashv, \vdash, \alpha, \beta)$ be a BiHom-left-symmetric dialgebra, and suppose
$\alpha^{2}=\beta^{2}=\alpha\circ\beta=\beta\circ\alpha=id.$ Then, $(S, \dashv,
\vdash, \alpha, \beta)\cong(S,\dashv,\vdash, \beta, \alpha).$
\end{prop}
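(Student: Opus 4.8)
The plan is to exhibit an explicit isomorphism, the natural guess being $\phi:=\alpha\circ\beta=\beta\circ\alpha$ (the composite of the two twisting maps, which is the only candidate compatible with interchanging the roles of $\alpha$ and $\beta$). The first — and really the only — substantive step is to notice that the four relations imposed on $\alpha,\beta$ are redundant: from $\alpha\circ\beta=\mathrm{id}$ together with $\beta^{2}=\mathrm{id}$ one computes
$$\alpha=\alpha\circ(\beta\circ\beta)=(\alpha\circ\beta)\circ\beta=\mathrm{id}\circ\beta=\beta,$$
so the two twisting maps actually coincide. In particular $\phi=\alpha\circ\beta=\mathrm{id}$, and $(S,\dashv,\vdash,\alpha,\beta)$ and $(S,\dashv,\vdash,\beta,\alpha)$ are literally the same BiHom-left-symmetric dialgebra.

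With this in hand I would check that $\phi$ is an isomorphism of BiHom-left-symmetric dialgebras from $(S,\dashv,\vdash,\alpha,\beta)$ to $(S,\dashv,\vdash,\beta,\alpha)$. It is bijective because $\phi\circ\phi=(\alpha\circ\beta)\circ(\alpha\circ\beta)=\alpha^{2}\circ\beta^{2}=\mathrm{id}$. It preserves both products: by \eqref{als0.0} and \eqref{als0.00},
$$\phi(x\dashv y)=\alpha\beta(x\dashv y)=\alpha\beta(x)\dashv\alpha\beta(y)=\phi(x)\dashv\phi(y),$$
and the same computation holds with $\vdash$ in place of $\dashv$. Finally, compatibility with the twisting data of the target demands $\phi\circ\alpha=\beta\circ\phi$ and $\phi\circ\beta=\alpha\circ\phi$; using $\beta\circ\alpha=\alpha\circ\beta=\mathrm{id}$, each side of each of these equations collapses and they reduce to the single identity $\alpha=\beta$ established in the first step. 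Hence $\phi$ is the desired isomorphism and $(S,\dashv,\vdash,\alpha,\beta)\cong(S,\dashv,\vdash,\beta,\alpha)$.

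I do not expect a genuine obstacle: once the collapse $\alpha=\beta$ is observed, the statement is essentially tautological. I would, however, add the remark that the hypothesis $\alpha\circ\beta=\mathrm{id}$ cannot be dropped in favour of merely requiring $\alpha,\beta$ to be commuting involutions: in that weaker setting $\phi=\alpha\circ\beta$ satisfies $\phi\circ\alpha=\beta$ but $\beta\circ\phi=\alpha$, so it fails to intertwine the pair $(\alpha,\beta)$ with $(\beta,\alpha)$, and the two structures need not be isomorphic at all — so the strength of the assumption is exactly what makes the proof run.
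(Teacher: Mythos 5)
Your proof is correct, and it takes a genuinely different route from the paper's. The paper never exhibits an isomorphism at all: it substitutes $x\mapsto\alpha\beta(x)$ into axiom \eqref{als1}, uses $\alpha^{2}=\mathrm{id}$ to rewrite $\alpha(\alpha\beta(x))$ as $\beta(x)$, and concludes that the defining identities of $(S,\dashv,\vdash,\beta,\alpha)$ hold, leaving the intertwining map implicit. You instead observe that the hypotheses as literally stated are over-determined: $\alpha\circ\beta=\mathrm{id}$ together with $\beta^{2}=\mathrm{id}$ forces $\alpha=\beta$, so the two structures coincide and $\phi=\alpha\circ\beta=\mathrm{id}$ is trivially the required isomorphism. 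That deduction is valid, and it buys a complete, self-contained proof of the proposition as written, at the price of revealing that the statement is then essentially vacuous. Your closing remark is the more valuable part of the comparison: the paper's computation only ever uses that $\alpha$ and $\beta$ are commuting involutions (the ``involutive'' condition of Section 2, which is presumably what the authors intended), but under that weaker hypothesis verifying the axioms of $(S,\dashv,\vdash,\beta,\alpha)$ does not by itself produce an isomorphism, and as you note neither $\mathrm{id}$ nor $\alpha\beta$ intertwines $(\alpha,\beta)$ with $(\beta,\alpha)$ unless $\alpha=\beta$. So your argument is airtight for the proposition as stated, and it correctly identifies exactly which reading of the hypothesis makes the paper's shorter computation insufficient on its own.
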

\begin{proof}
We prove only one axiom, as others are proved similarly.
For any $x, y, z\in S,$
\begin{eqnarray*}
\alpha(x)\dashv(y\dashv z)&=&\alpha(x)\dashv(y\vdash z)\Leftrightarrow\\
\alpha(\alpha\beta(x))\dashv (y\dashv z)&=&\alpha(\alpha\beta(x))\dashv(y\vdash z)\Leftrightarrow\\
\alpha^{2}\beta(x)\dashv(y\dashv z)&=&\alpha^{2}\beta(x)\dashv(y\vdash z)\Leftrightarrow\\
\beta(x)\dashv(y\dashv z)&=&\beta(x)\dashv(y\vdash z).
\end{eqnarray*}
Then $(S, \dashv,\vdash, \alpha, \beta)\cong(S, \dashv,\vdash, \beta, \alpha).$
\end{proof}
\begin{thm}\label{th1.1}
 Let $(S, \dashv, \vdash,\alpha,\beta)$ be a BiHom-left-symmetric dialgebra and $\alpha'\beta' : S\rightarrow S$ be two endomorphisms of
$S$ such that any two of the maps $\alpha,\beta,\alpha',\beta'$ commute. Then, $S_{\alpha',\beta'}=(S, \dashv_{\alpha',\beta'}=\dashv\circ(\alpha'\otimes\beta'), \vdash_{\alpha',\beta'}=\vdash\circ(\alpha'\otimes\beta'), \alpha'\circ\alpha,\beta'\circ\beta)$
is a BiHom-left-symmetric dialgebra.

Moreover, suppose that $(S', \dashv', \vdash',\gamma,\delta)$ is another BiHom-left-symmetric dialgebra
 and $\gamma',\delta' : S'\rightarrow S'$ be two endomorphisms of $S'$ such that any two of the maps $\gamma,\delta,\gamma',\delta'$ commute. If $f : S\rightarrow S'$ is a morphism of  BiHom-left-symmetric dialgebras such that $f\circ\alpha'=\gamma'\circ  f,~f\circ\beta'=\delta'\circ  f$, then $ f : S_{\alpha',\beta'}\rightarrow S'_{\gamma',\delta'}$
is a morphism of BiHom-left-symmetric dialgebras.
\end{thm}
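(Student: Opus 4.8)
The plan is to verify directly that the twisted structure $S_{\alpha',\beta'}$ satisfies all the defining axioms \eqref{als0}--\eqref{als4} of a BiHom-left-symmetric dialgebra, and then to check that the twisted map $f$ remains a morphism. First I would record the new twisting maps $\widetilde{\alpha}=\alpha'\circ\alpha$ and $\widetilde{\beta}=\beta'\circ\beta$, and the new products $x\dashv_{\alpha',\beta'}y=\alpha'(x)\dashv\beta'(y)$ and $x\vdash_{\alpha',\beta'}y=\alpha'(x)\vdash\beta'(y)$. Axiom \eqref{als0}, namely $\widetilde{\alpha}\circ\widetilde{\beta}=\widetilde{\beta}\circ\widetilde{\alpha}$, is immediate from the commutativity of any two of $\alpha,\beta,\alpha',\beta'$ together with \eqref{als0} for $S$. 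The multiplicativity axioms \eqref{als0.0}--\eqref{als0.00} follow because $\alpha'$ and $\beta'$ are endomorphisms of $S$ (hence commute with $\dashv$ and $\vdash$ in the required sense) and $\alpha,\beta$ satisfy the analogous identities; for instance $\widetilde{\alpha}(x\dashv_{\alpha',\beta'}y)=\alpha'\alpha(\alpha'(x)\dashv\beta'(y))=\alpha'\alpha\alpha'(x)\dashv\alpha'\alpha\beta'(y)=\widetilde{\alpha}(x)\dashv_{\alpha',\beta'}\widetilde{\alpha}(y)$, using commutativity of the maps.

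For the two "left-symmetric dialgebra" compatibility axioms \eqref{als1} and \eqref{als2}, I would expand both sides. Axiom \eqref{als1} for $S_{\alpha',\beta'}$ reads $\widetilde{\alpha}(x)\dashv_{\alpha',\beta'}(y\dashv_{\alpha',\beta'}z)=\widetilde{\alpha}(x)\dashv_{\alpha',\beta'}(y\vdash_{\alpha',\beta'}z)$. The left side unwinds to $\alpha'(\alpha'\alpha(x))\dashv\beta'(\alpha'(y)\dashv\beta'(z))=\alpha'^{2}\alpha(x)\dashv(\alpha'\beta'(y)\dashv\beta'^{2}(z))$, and using \eqref{als0.00} to move $\beta'$ inside; writing $X=\alpha'^2\alpha(x)$, $Y=\alpha'\beta'(y)$, $Z=\beta'^2(z)$ one recognizes $\alpha(X)\dashv(Y\dashv Z)$ after absorbing one factor of $\alpha$ — actually I should be careful: the outer twisting contributes $\widetilde{\alpha}=\alpha'\alpha$ and the product contributes another $\alpha'$, so the left slot carries $\alpha'^2\alpha$. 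The point is that applying \eqref{als1} for $S$ to the triple $(X,Y,Z)$ converts $\dashv$ to $\vdash$ in the innermost product, and the same substitution applied to the right side of the claimed identity produces exactly the same expression; so \eqref{als1} for $S_{\alpha',\beta'}$ follows from \eqref{als1} for $S$. Axiom \eqref{als2} is handled identically, with the $\beta$-side playing the symmetric role. Axioms \eqref{als3} and \eqref{als4} are the more elaborate four-term identities, but they are "linear combinations of terms of the form $\mu_j\circ(\mu_i\otimes\beta)$ or $\mu_j\circ(\alpha\otimes\mu_i)$" in the sense of the BiHom-$X$ formalism, so the computation of Theorem \ref{gftp1} (or equivalently the direct expansion done for \eqref{als1}) applies verbatim to each of the two terms on each side; after the substitution $X=\alpha'^{2}(\cdot),\ Y=\alpha'\beta'(\cdot),\ Z=\beta'^{2}(\cdot)$ both sides of \eqref{als3} (resp. \eqref{als4}) for $S_{\alpha',\beta'}$ reduce to $\alpha'^{\bullet}\beta'^{\bullet}$ applied to the corresponding identity \eqref{als3} (resp. \eqref{als4}) for $S$, which holds by hypothesis. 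I would present one of these two in detail and say the other is "proved similarly," matching the style of the preceding proofs in the paper.

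For the morphism part, the computation is short: for $x,y\in S$ and each of the two products $\diamond\in\{\dashv,\vdash\}$,
\[
f(x\diamond_{\alpha',\beta'}y)=f(\alpha'(x)\diamond\beta'(y))=f(\alpha'(x))\diamond' f(\beta'(y))=\gamma'(f(x))\diamond'\delta'(f(y))=f(x)\diamond'_{\gamma',\delta'}f(y),
\]
using that $f$ is a morphism of BiHom-left-symmetric dialgebras for $\diamond,\diamond'$ and the hypotheses $f\circ\alpha'=\gamma'\circ f$, $f\circ\beta'=\delta'\circ f$. Compatibility with the twisting maps follows from $f\circ\widetilde{\alpha}=f\circ\alpha'\circ\alpha=\gamma'\circ f\circ\alpha=\gamma'\circ\gamma\circ f=\widetilde{\gamma}\circ f$ and similarly for $\widetilde{\beta}$. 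The main (and really only) obstacle is bookkeeping: keeping track of which powers of $\alpha'$ and $\beta'$ land in which argument slot of the triple products in \eqref{als3}--\eqref{als4}, and repeatedly invoking the commutativity of all four maps together with the multiplicativity identities \eqref{als0.0}--\eqref{als0.00} to consolidate composite maps; there is no conceptual difficulty beyond that.
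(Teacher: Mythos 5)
Your proposal is correct and follows essentially the same route as the paper: expand the twisted products, reduce axiom \eqref{als1} to \eqref{als1} for $S$ evaluated at $X=\alpha'^{2}(x)$, $Y=\alpha'\beta'(y)$, $Z=\beta'^{2}(z)$, treat the remaining axioms analogously, and verify the morphism property by the same one-line computation. The only caveat is that for \eqref{als3}--\eqref{als4} the required substitution is not the same triple as for \eqref{als1} (one finds $X=\alpha'^{2}\beta'(x)$, $Y=\alpha'^{2}\beta'(y)$, $Z=\beta'^{2}(z)$, rather than a single power of $\alpha'\beta'$ applied to the whole identity), but this is a bookkeeping detail that does not affect the validity of the argument.
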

\begin{proof}
We only prove \eqref{als1} in $S_{\alpha',\beta'}$, since the other axioms are proved analogously. For any $x, y, z\in S$,
$$\begin{array}{lllllll}\alpha\alpha'(x)\dashv_{\alpha',\beta'}(y\dashv_{\alpha',\beta'} z)&=&\alpha\alpha(x)\dashv_{\alpha',\beta'}(\alpha'^{2}(y)\dashv\beta'(z))\\
&=&\alpha\alpha'^{2}(x)\dashv(\alpha'\beta'(y)\dashv\beta'^{2}(z))
\\
&=&\alpha\alpha'^{2}(x)\dashv(\alpha'\beta'(y)\vdash\beta'^{2}(z))\quad (by~\textsc{\eqref{als1}}~in~S)\\
&=&\alpha\alpha'(x)\dashv_{\alpha',\beta'}(\alpha'(y)\vdash\beta'(z))\\
&=&\alpha\alpha'(x)\dashv_{\alpha',\beta'}(y\vdash_{\alpha',\beta'} z).\end{array}$$
For the second assertion, for any $x,y\in S$,
\begin{eqnarray*}
f(x\dashv_{\alpha',\beta'} y)&=&f(\alpha'(x)\dashv\beta'(y))=f(\alpha'(x))\dashv' f(\beta'(y))\\
&=&\gamma'(f(x))\dashv'\delta'(f(y))=f(x)\dashv'_{\gamma',\delta'} f(y).
\end{eqnarray*}
Similarly, $f(x\vdash_{\alpha',\beta'} y)=f(x)\vdash'_{\gamma',\delta'} f(y)$.
\end{proof}

\begin{defn}
Let $(S, \dashv, \vdash, \alpha_{1}, \alpha_{2})$ be a BiHom-left-symmetric
dialgebra, and $V$  be a vector space.
Let $l_{\dashv}, r_{\dashv}, l_{\vdash}, r_{\vdash} :S \rightarrow gl(V),$ and $\beta_{1}, \beta_{2}: V \rightarrow V$. Then, $ (l_{\dashv}, r_{\dashv}, l_{\vdash}, r_{\vdash}, \beta_{1}, \beta_{2}, V)$ is called a bimodule of $ S $
if the following equations hold for any $ x, y \in S $ and $v\in V$:
$$\begin{array}{rlllllllllll}
l_{\dashv}(\alpha_{1}(x))l_{\dashv}(y)v&=& l_{\dashv}(\alpha_{1}(x))l_{\vdash}(y)v,\cr r_{\dashv}(\alpha_{1}(x)\dashv y)\beta_{1}(v)&=&r\dashv(x\vdash y)\beta_{1}(v),\cr
l_{\dashv}(\alpha_{1}(x))r_{\dashv}(y)v&=&l_{\dashv}(\alpha_{1}(x))r_{\vdash}(y)v,\cr l_{\vdash}(x \vdash y)\beta_{2}(v) &=& l_{\vdash}(x \dashv y)\beta_{2}(v),\cr
r_{\vdash}(\alpha_{2}(x))r_{\vdash}(y)v &=& r_{\vdash}(\alpha_{2}(x))r_{\dashv}(y)v, \cr r_{\dashv}(\alpha_{2}(x))l_{\vdash}(y)v &=& r_{\vdash}(\alpha_{2}(x))l_{\dashv}(y)v,\cr
l_{\dashv}(\alpha_{1}\alpha_{2}(x))l_{\dashv}(\alpha_{1}(y))v&-&
l_{\dashv}(\alpha_{2}(x)\dashv\alpha_{1}(y))\beta_{2}(v)\cr
&=&
l_{\vdash}(\alpha_{1}\alpha_{2}(y))l_{\dashv}(\alpha_{1}(x))v-l_{\dashv}(\alpha_{2}(y)\vdash\alpha_{1}(x))\beta_{2}(v),\cr
 r_{\dashv}(\alpha_{1}(x)\dashv y)\beta_{1}\beta_{2}(v)&-&r_{\dashv}(\alpha_{2}(y))r_{\dashv}(\alpha_{1}(x))\beta_{2}(v)\cr
&=& l_{\vdash}(\alpha_{1}\alpha_{2}(x))r_{\dashv}(y)\beta_{1}(v)-
 r_{\dashv}(\alpha_{2}(x))l_{\vdash}(\alpha_{2}(y))\beta_{1}(v),\cr
 l_{\vdash}(\alpha_{1}\alpha_{2}(x))l_{\vdash}(\alpha_{1}(y))v&-&l_{\vdash}(\alpha_{2}(x)\vdash\alpha_{1}(y))\beta_{2}(v)\cr
&=& l_{\vdash}(\alpha_{1}\alpha_{2}(y))l_{\vdash}(\alpha_{1}(x))v-l_{\vdash}(\alpha_{2}(y)\vdash\alpha_{1}(x))\beta_{2}(v),\cr
 r_{\vdash}(\alpha_{1}(x)\vdash y)\beta_{1}\beta_{2}(v)&-&r_{\vdash}(\alpha_{2}(y))r_{\vdash}(\alpha_{1}(x))\beta_{2}(v)\cr
&=& l_{\vdash}(\alpha_{1}\alpha_{2}(x))r_{\vdash}(y)\beta_{1}(v)-
 r_{\vdash}(\alpha_{2}(x))l_{\vdash}(\alpha_{2}(y))\beta_{1}(v),\cr
  \beta_{1}(l_{\vdash}(x)v)= l_{\vdash}(\alpha_{1}(x))\beta_{1}(v),&&\beta_{1}(r_{\vdash}(x)v)= r_{\vdash}(\alpha_{1}(x))\beta_{1}(v),\cr
\beta_{2}(l_{\vdash}(x)v) = l_{\vdash}(\alpha_{2}(x))\beta_{2}(v),&&\beta_{2}(r_{\vdash}(x)v)= r_{\vdash}(\alpha_{2}(x))\beta_{2}(v),\cr
\beta_{1}(l_{\dashv}(x)v)= l_{\dashv}(\alpha_{1}(x))\beta_{1}(v),&& \beta_{1}(r_{\dashv}(x)v)= r_{\dashv}(\alpha_{1}(x))\beta_{1}(v),\cr
\beta_{2}(l_{\dashv}(x)v) = l_{\dashv}(\alpha_{2}(x))\beta_{2}(v),&&\beta_{2}(r_{\dashv}(x)v)=r_{\dashv}(\alpha_{2}(x))\beta_{2}(v).
\end{array}$$
\end{defn}
\begin{prop}
If $(l_{\dashv}, r_{\dashv}, l_{\vdash}, r_{\vdash}, \beta_{1}, \beta_{2}, V)$ is a bimodule of a BiHom-left-symmetric
dialgebra $(S,\dashv, \vdash, \alpha_{1}, \alpha_{2}),$ then there exists a BiHom-left-symmetric
dialgebra structure on the direct sum $S\oplus V $ of the underlying vector spaces of $ S $ and $V$ given for all $ x, y \in S, u, v \in V $ by
\begin{eqnarray*}
(x + u) \dashv' (y + v) &:=& x \dashv y + l_{\dashv}(x)v + r_{\dashv}(y)u,\cr
(x + u) \vdash' (y + v) &:=& x \vdash y + l_{\vdash}(x)v + r_{\vdash}(y)u, \cr
(\alpha_1+\beta_1)(x+u)&:=&\alpha_1(x)+\beta_1(u),\cr
(\alpha_2+\beta_2)(x+u)&:=&\alpha_2(x)+\beta_2(u).
\end{eqnarray*}
We denote this structure by $ S \times_{l_{\dashv},r_{\dashv}, l_{\vdash}, r_{\vdash}, \alpha_{1}, \alpha_{2}, \beta_{1}, \beta_{2}} V$.
\end{prop}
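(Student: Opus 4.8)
The plan is to verify directly, axiom by axiom, that $(S\oplus V,\dashv',\vdash',\alpha_1+\beta_1,\alpha_2+\beta_2)$ satisfies \eqref{als0}--\eqref{als4} of Definition~\ref{gls}. The organizing principle is that every one of these identities, after one expands both sides using the definitions of $\dashv'$, $\vdash'$ and of the two twisting maps, separates into an $S$-valued part and a $V$-valued part: the $S$-part is exactly the same identity read inside $(S,\dashv,\vdash,\alpha_1,\alpha_2)$, while the $V$-part, once grouped according to which of the input vectors from $V$ it is linear in, is a sum of expressions each of which is one of the defining relations of the bimodule $(l_\dashv,r_\dashv,l_\vdash,r_\vdash,\beta_1,\beta_2,V)$. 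Thus each axiom on $S\oplus V$ follows from the corresponding axiom in $S$ and from the bimodule relations, both of which are hypotheses.

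First I would dispose of the structural axioms. Axiom \eqref{als0}, $(\alpha_1+\beta_1)\circ(\alpha_2+\beta_2)=(\alpha_2+\beta_2)\circ(\alpha_1+\beta_1)$, is immediate from $\alpha_1\alpha_2=\alpha_2\alpha_1$ in $S$ and $\beta_1\beta_2=\beta_2\beta_1$ on $V$. For \eqref{als0.0} and \eqref{als0.00}, expanding for instance $(\alpha_1+\beta_1)\big((x+u)\dashv'(y+v)\big)$ gives $\alpha_1(x\dashv y)+\beta_1\big(l_\dashv(x)v+r_\dashv(y)u\big)$, whereas $\big((\alpha_1+\beta_1)(x+u)\big)\dashv'\big((\alpha_1+\beta_1)(y+v)\big)$ gives $\alpha_1(x)\dashv\alpha_1(y)+l_\dashv(\alpha_1(x))\beta_1(v)+r_\dashv(\alpha_1(y))\beta_1(u)$; the $S$-parts agree by \eqref{als0.0} in $S$ and the $V$-parts agree by the equivariance relations $\beta_1\circ l_\dashv(x)=l_\dashv(\alpha_1(x))\circ\beta_1$ and $\beta_1\circ r_\dashv(x)=r_\dashv(\alpha_1(x))\circ\beta_1$; the remaining three cases ($\vdash'$ with $\alpha_1+\beta_1$, and $\dashv',\vdash'$ with $\alpha_2+\beta_2$) are identical, using the other six equivariance relations.

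The core is \eqref{als1}--\eqref{als4}. For \eqref{als1} one computes $(\alpha_1+\beta_1)(x+u)\dashv'\big((y+v)\dashv'(z+w)\big)$ and $(\alpha_1+\beta_1)(x+u)\dashv'\big((y+v)\vdash'(z+w)\big)$; each side is an $S$-term plus three $V$-terms, linear in $w$, $v$, $u$ respectively, coming from a composite left action $l_\dashv(\alpha_1(x))l_\diamond(y)$, from $l_\dashv(\alpha_1(x))r_\diamond(z)$, and from $r_\dashv(y\diamond z)\beta_1(u)$, where $\diamond$ is $\dashv$ on the first side and $\vdash$ on the second. The $S$-terms agree by \eqref{als1} in $S$, and the three pairs of $V$-terms agree by the bimodule relations identifying $l_\dashv(\alpha_1(x))l_\dashv(y)$ with $l_\dashv(\alpha_1(x))l_\vdash(y)$, identifying $l_\dashv(\alpha_1(x))r_\dashv(z)$ with $l_\dashv(\alpha_1(x))r_\vdash(z)$, and identifying $r_\dashv$ of a $\dashv$-product with $r_\dashv$ of the corresponding $\vdash$-product on $\beta_1$-vectors. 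Axiom \eqref{als2} is the mirror image, done with the $\vdash$-side relations and $\beta_2$; axioms \eqref{als3} and \eqref{als4} proceed in the same spirit, but now each of the four triple products occurring in them contributes three $V$-terms, so roughly a dozen $V$-terms appear on each side: the single-action terms cancel pairwise by the equivariance relations, while the nested double-action terms regroup into precisely the two ``long'' bimodule identities (those of the form ``a difference of two composite operators equals another such difference'').

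The one genuinely delicate step is this last matching in \eqref{als3}--\eqref{als4}: the computation is only bookkeeping, but one must keep careful track of which of $\alpha_1,\alpha_2,\beta_1,\beta_2$ decorates each argument and of the order of $l$ and $r$ in every composite action, since the bimodule relations are written in one fixed normalized form and several applications of the equivariance relations are usually needed to bring the expanded terms into that form. Once \eqref{als0}--\eqref{als4} are all verified, the induced structure is the BiHom-left-symmetric dialgebra denoted $S\times_{l_\dashv,r_\dashv,l_\vdash,r_\vdash,\alpha_1,\alpha_2,\beta_1,\beta_2}V$, which completes the proof.
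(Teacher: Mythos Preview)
Your proposal is correct and follows essentially the same approach as the paper: direct verification of the axioms \eqref{als0}--\eqref{als4} by expanding both sides and separating into $S$- and $V$-components. The paper carries out only axiom \eqref{als1} explicitly and dismisses the rest with ``others are proved similarly,'' whereas you give a more complete roadmap, including the structural axioms and the bookkeeping needed for \eqref{als3}--\eqref{als4}; but the underlying method is identical.
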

\begin{proof}
We prove only the axiom \eqref{als1}, as the others are proved similarly.
For any $x_{1},x_{2},x_{3}\in S$ and $v_1, v_2, v_3\in V$,
\begin{align*}
&(\alpha_{1}+\beta_{1})(x_{1}+v_{1})\dashv'((x_2+v_2)\dashv'(x_3+v_3))\\
&\quad=(\alpha_1(x_1)+\beta_1(v_1))\dashv'(x_1\dashv x_3+l_\dashv(x_2)v_3+r_\dashv(x_3)v_2)\\
&\quad =\alpha_1(x_1)\dashv(x_2\dashv x_3)+l_\dashv(\alpha_1(x_1))l_\dashv(x_2)v_3 \\
& \quad\quad +l_\dashv(\alpha_1(x_1))r_\dashv(x_3)v_2+r_\dashv(x_2\dashv x_3)\beta_1(v_1)\\
& \quad=\alpha_1(x_1)\dashv(x_2\vdash x_3)+l_{\dashv}(\alpha_1(x_1))l_\vdash(x_2)v_3\\
&\quad \quad +l_\dashv(\alpha_1(x_1))r_\vdash(x_3)v_2+r_\dashv(x_2\vdash x_3)\beta_1(v)\\
&\quad =(\alpha_{1}+\beta_{1})(x_{1}+v_{1})\dashv'((x_2+v_2)\vdash'(x_3+v_3)).
\qedhere \end{align*}
\end{proof}
\begin{exes}\begin{enumerate}
\item
Let $(S,\dashv, \vdash,\alpha,\beta)$ be a BiHom-left-symmetric dialgebra. Then $(L_{\dashv},R_{\dashv},L_{\vdash},R_{\vdash},\alpha,\beta,S)$ and $(L_{\dashv},0,0,R_{\vdash},\alpha,\beta,S)$ are a bimodules of $(S,\dashv, \vdash,\alpha,\beta)$ where
$L_{\dashv}(a)b=a\dashv b,~R_{\dashv}(a)b=b\dashv a,~L_{\vdash}(a)b=a\vdash b$ and $R_{\vdash}(a)b=b\vdash a$ for all $(a,b)\in S^{2}$. More generally, if $B$ is a two-sided BiHom-ideal of $(S,\dashv, \vdash,\alpha,\beta)$, then $(L_{\dashv},R_{\dashv},L_{\vdash},R_{\vdash},\alpha,\beta,B)$ is a bimodule of $S$, where for all $x\in B$ and $(a,b)\in S^{2}$,
$$L_{\dashv}(a)x=a\dashv x=x\dashv a=R_{\dashv}(a)x, \quad L_{\vdash}(a)x=a\vdash x=x\vdash a=R_{\vdash}(a)x.$$
\item
If $(S,\dashv, \vdash)$ is a left-symmetric dialgebra and $(l_\dashv,r_\dashv,l_\vdash,r_\vdash,V)$ are a bimodules of $S$ in the usual sense, then $(l_\dashv,r_\dashv,l_\vdash,r_\vdash,Id_{V},Id_{V},V)$ is a bimodule of $\mathbb{S}$, where $\mathbb{S}=(S,\dashv, \vdash,Id_{S},
Id_{S})$ is a BiHom-left-symmetric dialgebra.\end{enumerate}
\end{exes}
\begin{prop}\label{viaf}
If $f:(S,\dashv_1, \vdash_1,\alpha_{1},\alpha_2)\longrightarrow(S',\dashv_2, \vdash_2,\beta_{1},\beta_{2})$ is a morphism of BiHom-left-symmetric dialgebras, then $(l_{\dashv_1},r_{\dashv_1},l_{\vdash_1},r_{\vdash_1},\beta_1,\beta_{2},S')$ becomes a bimodule of $S$ via $f$, that is, $l_{\dashv_1}(a)b=f(a)\dashv_2 b,~r_{\dashv_1}(a)b=b \dashv_2 f(a),~l_{\vdash_1}(a)b=f(a)\vdash_2 b$ and $r_{\vdash_1}(a)b=b \vdash_2 f(a)$ for all
$(a,b)\in S\times S'$.
\end{prop}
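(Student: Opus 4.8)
The plan is to take $l_{\dashv_1},r_{\dashv_1},l_{\vdash_1},r_{\vdash_1}:S\to gl(S')$ to be the four linear maps defined by the displayed formulas and to verify, one equation at a time, that the data $(l_{\dashv_1},r_{\dashv_1},l_{\vdash_1},r_{\vdash_1},\beta_1,\beta_2,S')$ meet every defining identity of a bimodule of the BiHom-left-symmetric dialgebra $(S,\dashv_1,\vdash_1,\alpha_1,\alpha_2)$. First one checks the harmless preliminary that $(S',\beta_1,\beta_2)$ is a BiHom-module, which holds because $\beta_1\beta_2=\beta_2\beta_1$ is one of the structure axioms (the analogue of \eqref{als0}) of the dialgebra $S'$. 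The only properties of $f$ that enter the argument are multiplicativity, $f(a\dashv_1 b)=f(a)\dashv_2 f(b)$ and $f(a\vdash_1 b)=f(a)\vdash_2 f(b)$, and compatibility with the twistings, $f\circ\alpha_1=\beta_1\circ f$ and $f\circ\alpha_2=\beta_2\circ f$.

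Conceptually, the data above is just the \emph{pullback} along $f$ of the regular bimodule $(L_{\dashv_2},R_{\dashv_2},L_{\vdash_2},R_{\vdash_2},\beta_1,\beta_2,S')$ of $S'$ (the first item in the Examples above, taken for $S'$), so I expect each bimodule equation to collapse, after substituting the formulas for the $l$'s and $r$'s, pushing the $f$'s through $\alpha_1,\alpha_2$ via $f\circ\alpha_i=\beta_i\circ f$ and through $\dashv_1,\vdash_1$ via multiplicativity, to an instance of one of the structure axioms of $S'$ with the ``module slot'' occupied by an arbitrary element of $S'$ and the other two slots filled by $f$-images from $S$. For example, $l_{\dashv_1}(\alpha_1(x))l_{\dashv_1}(y)v$ and $l_{\dashv_1}(\alpha_1(x))l_{\vdash_1}(y)v$ unfold to $\beta_1(f(x))\dashv_2(f(y)\dashv_2 v)$ and $\beta_1(f(x))\dashv_2(f(y)\vdash_2 v)$ respectively, which agree by \eqref{als1} in $S'$; the relation $r_{\dashv_1}(x\dashv_1 y)\beta_1(v)=r_{\dashv_1}(x\vdash_1 y)\beta_1(v)$ likewise reduces to \eqref{als1} in $S'$; the equations involving only $\vdash$ reduce to \eqref{als2} and \eqref{als4}; the two ``left-symmetric'' equations carrying a difference on each side come from \eqref{als3} and \eqref{als4}; and the eight remaining equivariance equations reduce to the multiplicativity axioms \eqref{als0.0}–\eqref{als0.00} of $S'$ together with $f\circ\alpha_i=\beta_i\circ f$, as in $\beta_1(l_{\vdash_1}(x)v)=\beta_1(f(x))\vdash_2\beta_1(v)=f(\alpha_1(x))\vdash_2\beta_1(v)=l_{\vdash_1}(\alpha_1(x))\beta_1(v)$.

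The only genuine difficulty is bookkeeping: the bimodule definition comprises on the order of eighteen identities, and for each one must locate the correct source axiom among \eqref{als1}–\eqref{als4} and keep careful track of which twisting map ($\alpha_1$ versus $\alpha_2$, hence $\beta_1$ versus $\beta_2$ after transport by $f$) belongs in which slot, so that, for instance, the identity featuring $r_{\vdash_1}(\alpha_1(x)\vdash_1 y)\beta_1\beta_2(v)$ is matched with the instance of \eqref{als4} (respectively \eqref{als3}) from which it arises. No computation beyond the identities already available for $S'$ is required, so, exactly as in the proofs earlier in the paper, it suffices to carry out one or two representative cases in detail and remark that the remaining ones are entirely analogous. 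As for the concluding ``that is'' clause of the statement, it is merely the prescription of the four maps with which the verification begins, so nothing further is needed there.
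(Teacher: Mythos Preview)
Your proposal is correct and follows essentially the same approach as the paper: the paper's proof also just verifies one representative bimodule axiom (namely $l_{\dashv_1}(\alpha_1(x))l_{\dashv_1}(y)z=l_{\dashv_1}(\alpha_1(x))l_{\vdash_1}(y)z$) by unfolding the definitions, using $f\circ\alpha_1=\beta_1\circ f$, and reducing to axiom \eqref{als1} in $S'$, then declares the remaining axioms analogous. Your write-up is in fact more complete, since you also indicate which target axiom each remaining bimodule identity reduces to and explain the conceptual point that this is the pullback of the regular bimodule of $S'$.
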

\begin{proof}
We prove only first axiom, as the other axioms are proved similarly.
For any $x,y\in S, z\in S'$,
\begin{align*}
l_{\dashv_1}(\alpha_1(x))l_{\vdash_1}(y)z&=f(\alpha_1(x))\dashv_2(f(y)\vdash_2 z)\\
&=\beta_1f(x)\dashv_2( f(y)\vdash_2 z)\\
&=\beta_1f(x)\dashv_2( f(y)\dashv_2 z) \quad \textrm{(by \eqref{als1})}\\
&=f(\alpha_1(x))\dashv_2(f(y)\dashv_2 z)\\
&=l_{\dashv_1}(\alpha_1(x))l_{\dashv_1}(y)z.
\qedhere
\end{align*}\end{proof}
\begin{defn}
An abelian extension of BiHom-left-symmetric dialgebra is a short exact sequence of BiHom-left-symmetric dialgebras
$$0\longrightarrow (V,\alpha_{V},\beta_{V})\stackrel{\mbox{i}} \longrightarrow(A,\dashv_{A},\vdash_A,\alpha_{A},\beta_{A})\stackrel{\mbox{$\pi$}}\longrightarrow (B,\dashv_{B},\vdash_B,\alpha_{B},\beta_{B})\longrightarrow 0 ,$$
where $(V,\alpha_{V},\beta_{V})$ is a trivial BiHom-left-symmetric dialgebra, and $i$ and $\pi$ are morphisms of BiHom-left-symmetric dialgebras. Furthermore, if there exists a morphism $s:(B,\dashv_{B},\vdash_B,\alpha_{B},\beta_{B})
\longrightarrow (A,\dashv_{A},\vdash_A,\alpha_{A},\beta_{A})$ such that $\pi\circ s=id_{B}$ then the abelian extension is said to be split and $s$ is called a section of $\pi$.
\end{defn}
\begin{rmk} Consider the  split null extension $S\oplus V$ determined by the bimodule $(l_\dashv,r_\dashv,l_\vdash,r_\vdash,\alpha_{V},\beta_{V},V)$ of BiHom-left-symmetric dialgebra
$(S,\dashv,\vdash,\alpha,\beta)$ in the previous proposition. Write elements $a+v$ of $S\oplus V$ as $(a,v).$ Then there is an injective homomorphism of BiHom-modules
$i :V\rightarrow S\oplus V $ given by $i(v)=(0,v)$ and a surjective homomorphism of BiHom-modules $\pi : S\oplus V\rightarrow S$ given by $\pi(a,v)=a.$
Moreover, $i(V)$ is a two-sided BiHom-ideal of $S\oplus V$  such that $S\oplus V/i(V)\cong S$. On the other hand, there is a morphism of BiHom-left-symmetric dialgebra
$\sigma: S\rightarrow S\oplus V$ given by $\sigma(a)=(a,0)$ which is clearly a section of $\pi.$ Hence, we obtain the abelian split exact sequence of
BiHom-left-symmetric dialgebra and $(l_\dashv,r_\dashv,l_\vdash,r_\vdash,\alpha_{V},\beta_{V},V)$ is a bimodule for $S$ via $\pi.$
 \end{rmk}
\begin{thm}\label{mamm}
Let $(S,\dashv, \vdash,\alpha_1,\alpha_2)$ be a BiHom-left-symmetric dialgebra,  and let $(l_{\dashv},r_{\dashv},l_{\vdash},r_{\vdash},\beta_1,\beta_2,V)$ be a bimodule of $S$. Let $\alpha'_1,\alpha'_2$ be endomorphisms of $S$ such that any two of the maps $\alpha_1,\alpha'_1,\alpha_2,\alpha'_2$ commute
and $\beta'_1,~\beta'_2$ be linear self-maps of $V$ such that any two of the maps $\beta_1,\beta'_1,\beta_2,\beta'_2$ commute. Suppose furthermore that
$$\left\{
   \begin{array}{lllllll}
    \beta'_1\circ l_\dashv=(l_\dashv\circ\alpha'_1)\beta'_1,~~
     \beta'_2\circ l_\dashv=(l_\dashv\circ\alpha'_2)\beta'_2,& \\
         \beta'_1\circ l_\vdash=(l_\vdash\circ\alpha'_1)\beta'_1,~~
     \beta'_2\circ l_\vdash=(l_\vdash\circ\alpha'_2)\beta'_2,&
   \end{array}
 \right.$$
$$\left\{
   \begin{array}{lllllll}
    \beta'_1\circ r_\dashv=(r_\dashv\circ\alpha'_1)\beta'_1,~~
     \beta'_2\circ r_\dashv=(r_\dashv\circ\alpha'_2)\beta'_2,& \\
         \beta'_1\circ r_\vdash=(r_\vdash\circ\alpha'_1)\beta'_1,~~
     \beta'_2\circ r_\vdash=(r_\vdash\circ\alpha'_2)\beta'_2,&
   \end{array}
 \right.$$
and let $S_{\alpha'_1,\alpha'_2}$ be the BiHom-left-symmetric dialgebra $(S,\dashv_{\alpha'_1,\alpha'_2}, \vdash_{\alpha'_1,\alpha'_2},\alpha_1\alpha'_1,\alpha_2\alpha'_2)$ and
$V_{\beta'_1,\beta'_2}=(\widetilde{l}_{\dashv},\widetilde{r}_{\dashv},\widetilde{l}_{\vdash},
\widetilde{r}_{\vdash},\beta_1\beta'_1,\beta_2\beta'_2,V)$ where
\begin{equation}
\begin{array}{l}
\widetilde{l}_{\dashv}=(l_{\dashv}\circ\alpha'_1)\beta'_2, \quad
\widetilde{r}_{\dashv}=(r_{\dashv}\circ\alpha'_2)\beta'_1, \\ \widetilde{l}_{\vdash}=(l_{\vdash}\circ\alpha'_1)\beta'_2, \quad
\widetilde{r}_{\vdash}=(r_{\vdash}\circ\alpha'_2)\beta'_1.
\end{array}
\end{equation}
Then $V_{\beta'_1,\beta'_2}$ is a bimodule of $S_{\alpha'_1,\alpha'_2}$.
\end{thm}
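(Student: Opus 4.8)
The plan is to verify directly that the seven families of bimodule axioms hold for the twisted data $(\widetilde l_\dashv,\widetilde r_\dashv,\widetilde l_\vdash,\widetilde r_\vdash,\beta_1\beta'_1,\beta_2\beta'_2,V)$ over the twisted dialgebra $S_{\alpha'_1,\alpha'_2}$. The bimodule definition for a BiHom-left-symmetric dialgebra splits into the ``easy'' relations (the six equalities relating $l_\dashv,l_\vdash,r_\dashv,r_\vdash$ composed with a single $\alpha_i$, plus the eight commutation relations with $\beta_1,\beta_2$) and the four ``hard'' relations (the ones mixing two operators and a product $\dashv$ or $\vdash$). I would state at the outset that only one representative of each type needs to be checked, the rest being completely analogous, exactly as is done throughout Section~\ref{sec:homleftsymcolordialg}.

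\emph{First}, I would record the bookkeeping identities that make everything go through: since any two of $\alpha_1,\alpha'_1,\alpha_2,\alpha'_2$ commute and any two of $\beta_1,\beta'_1,\beta_2,\beta'_2$ commute, and since by hypothesis $\beta'_i$ intertwines each of $l_\dashv,l_\vdash,r_\dashv,r_\vdash$ with $\alpha'_i$, one can freely move primed maps past unprimed ones. Combined with the original bimodule axioms for $(l_\dashv,r_\dashv,l_\vdash,r_\vdash,\beta_1,\beta_2,V)$ over $(S,\dashv,\vdash,\alpha_1,\alpha_2)$ (in particular the $\beta_i$-commutation axioms), this gives $\widetilde l_\dashv(\alpha_1\alpha'_1(x))=(l_\dashv\circ\alpha_1\alpha'^2_1)\beta'_2$ acting suitably, etc. The $\beta$-type axioms for $V_{\beta'_1,\beta'_2}$, e.g. $(\beta_1\beta'_1)(\widetilde l_\vdash(x)v)=\widetilde l_\vdash(\alpha_1\alpha'_1(x))(\beta_1\beta'_1)(v)$, then reduce after unwinding definitions to the corresponding original axiom $\beta_1(l_\vdash(x')v')=l_\vdash(\alpha_1(x'))\beta_1(v')$ with $x'=\alpha'_1(x)$, $v'=\beta'_2(v)$, together with the intertwining hypotheses; these are short and I would present just one.

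\emph{Next}, for one of the six single-$\alpha$ relations, say $l_\dashv(\alpha_1(x))l_\dashv(y)v=l_\dashv(\alpha_1(x))l_\vdash(y)v$, I would compute $\widetilde l_\dashv((\alpha_1\alpha'_1)(x))\widetilde l_\dashv(y)v$, push all primed maps to one side using commutativity and the intertwining relations, land on $l_\dashv(\alpha_1(x''))l_\dashv(y'')(v'')$ for appropriate twisted arguments, apply the original axiom to rewrite $l_\dashv$ as $l_\vdash$ in the inner slot, and reassemble to get $\widetilde l_\dashv((\alpha_1\alpha'_1)(x))\widetilde l_\vdash(y)v$. \emph{Then}, for the genuinely substantial part, I would treat one of the four mixed relations — the natural choice is the first, $l_\dashv(\alpha_1\alpha_2(x))l_\dashv(\alpha_1(y))v-l_\dashv(\alpha_2(x)\dashv\alpha_1(y))\beta_2(v)=l_\vdash(\alpha_1\alpha_2(y))l_\dashv(\alpha_1(x))v-l_\dashv(\alpha_2(y)\vdash\alpha_1(x))\beta_2(v)$ — written for $S_{\alpha'_1,\alpha'_2}$ and $V_{\beta'_1,\beta'_2}$. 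Here one must also use $\dashv_{\alpha'_1,\alpha'_2}=\dashv\circ(\alpha'_1\otimes\alpha'_2)$ and the fact (from the definition of $S_{\alpha'_1,\alpha'_2}$) that $\alpha_1\alpha_2\alpha'_1\alpha'_2$ is its composite twist; after substituting the definitions of $\widetilde l_\dashv,\widetilde l_\vdash$ and collecting the primed endomorphisms (legitimate by all the commutation hypotheses), both sides reduce to the original mixed axiom evaluated at suitably twisted arguments of the form $\alpha'^2_1\alpha'_2(\cdot)$, $\alpha'_1\alpha'_2(\cdot)$ and $\beta'_1\beta'_2(\cdot)$.

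\textbf{The main obstacle} is purely organizational rather than conceptual: one must keep precise track of how many copies of each primed map $\alpha'_1,\alpha'_2,\beta'_1,\beta'_2$ accumulate on each argument when the definitions $\widetilde l_\dashv=(l_\dashv\circ\alpha'_1)\beta'_2$, etc., are composed twice and matched against the twist $\alpha_1\alpha_2\alpha'_1\alpha'_2$ appearing inside $l_\dashv(\,\cdot\dashv_{\alpha'_1,\alpha'_2}\,\cdot)$, so that the arguments fed into the original axiom on the two sides of the equation genuinely coincide. Because all the relevant maps pairwise commute and the intertwining hypotheses on $\beta'_i$ are exactly what is needed to slide $\beta'_i$ through the $l$'s and $r$'s, no new identity is required beyond what is hypothesized, and the verification of each remaining axiom is a mechanical repetition; I would therefore carry out one representative of each of the three types in full and state that the others follow the same pattern.
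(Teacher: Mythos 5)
Your proposal is correct and follows essentially the same route as the paper: a direct verification in which the intertwining hypotheses $\beta'_i\circ l=(l\circ\alpha'_i)\beta'_i$ and the pairwise commutativity are used to slide the primed maps inward, so that each twisted axiom reduces to the corresponding original bimodule axiom evaluated at arguments of the form $\alpha'^{2}_1\alpha'_2(\cdot)$, $\beta'^{2}_2(\cdot)$, etc. The paper is in fact terser than your plan, carrying out only the single axiom $\widetilde{l}_{\dashv}(\alpha_1\alpha'_1(x))\widetilde{l}_{\dashv}(y)v=\widetilde{l}_{\dashv}(\alpha_1\alpha'_1(x))\widetilde{l}_{\vdash}(y)v$ and declaring the rest analogous, whereas you propose one representative from each of the three groups of axioms.
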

\begin{proof}
We prove only one axiom, as others are proved similarly.
For any $x,y\in S, v\in V$,
\begin{align*}
\widetilde{l}_{\dashv}(\alpha_1\alpha'_1(x))\widetilde{l}_{\dashv}(y)v
&=l_{\dashv}(\alpha_1\alpha'^{2}_1(x))\beta'_2 (l_{\dashv}(\alpha'_{1}(y))\beta'_2(v)\\
&=l_{\dashv}(\alpha_1\alpha'^{2}_1(x))l_{\dashv}(\alpha'_1\alpha'_2(y))\beta'^{2}_2(v)\\
&=l_{\dashv}(\alpha_1\alpha'^{2}_1(x))l_{\vdash}(\alpha'_1\alpha'_2(y))\beta'^{2}_2(v)\\
&=\widetilde{l}_{\dashv}(\alpha_1\alpha'_1(x))\widetilde{l}_{\vdash}(y)v.
\qedhere \end{align*}
\end{proof}
\begin{cor}
Let $(S,\dashv, \vdash,\alpha_1,\alpha_2)$ be a BiHom-left-symmetric dialgebra, and let $(l_{\dashv},r_{\dashv},l_{\vdash},r_{\vdash},\beta_1,\beta_2,V)$ be a bimodule of $S$. Then
$V_{\beta^{q}_1,\beta^{q}_2}$ is a bimodule of $S_{\alpha^{p}_1,\alpha^{p}_2}$ for any nonnegative integers $p$ and $q$.
\end{cor}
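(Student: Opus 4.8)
The plan is to derive the corollary as a direct specialization of Theorem~\ref{mamm}, choosing the auxiliary endomorphisms to be powers of the structure maps already present.

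First I would set $\alpha'_1=\alpha_1^{\,p}$, $\alpha'_2=\alpha_2^{\,p}$, $\beta'_1=\beta_1^{\,q}$ and $\beta'_2=\beta_2^{\,q}$. Since $\alpha_1$ and $\alpha_2$ commute (this is axiom \eqref{als0} for $S$), any two of the four maps $\alpha_1,\alpha_1^{\,p},\alpha_2,\alpha_2^{\,p}$ commute, so $\alpha'_1,\alpha'_2$ are admissible endomorphisms in the sense required by Theorem~\ref{mamm}; likewise $\beta_1\beta_2=\beta_2\beta_1$ on $V$ (part of the bimodule axioms, via $\beta_1(l_\vdash(x)v)=l_\vdash(\alpha_1(x))\beta_1(v)$ together with the compatibility between $\beta_1$ and $\beta_2$ inherited from the BiHom-module structure of $V$), so any two of $\beta_1,\beta_1^{\,q},\beta_2,\beta_2^{\,q}$ commute and $\beta'_1,\beta'_2$ are admissible self-maps of $V$.

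Next I would verify the six intertwining hypotheses of Theorem~\ref{mamm}, namely $\beta'_1\circ l_\dashv=(l_\dashv\circ\alpha'_1)\beta'_1$ and its five analogues. These follow by iterating the last eight equations in the definition of a bimodule: for instance $\beta_1(l_\dashv(x)v)=l_\dashv(\alpha_1(x))\beta_1(v)$ gives, by an immediate induction on $q$, that $\beta_1^{\,q}(l_\dashv(x)v)=l_\dashv(\alpha_1^{\,q}(x))\beta_1^{\,q}(v)$, i.e. $\beta'_1\circ l_\dashv=(l_\dashv\circ\alpha'_1)\beta'_1$; the remaining five identities are obtained the same way from the other seven $\beta_i$-equivariance relations. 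With these in hand, Theorem~\ref{mamm} applies verbatim and tells us that $V_{\beta'_1,\beta'_2}$ is a bimodule of $S_{\alpha'_1,\alpha'_2}$. Finally I would identify $S_{\alpha'_1,\alpha'_2}=S_{\alpha_1^{\,p},\alpha_2^{\,p}}$ and $V_{\beta'_1,\beta'_2}=V_{\beta_1^{\,q},\beta_2^{\,q}}$, reading off the twisted actions $\widetilde{l}_\dashv=(l_\dashv\circ\alpha_1^{\,p})\beta_2^{\,q}$ and so on, which is exactly the claimed statement.

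There is essentially no obstacle here: the only mild point to be careful about is that the corollary allows $p$ and $q$ to be chosen independently, so one must not conflate them, but Theorem~\ref{mamm} already permits $\alpha'_i$ and $\beta'_j$ to be unrelated, so this causes no difficulty. The proof is therefore a two-line invocation, and in the write-up I would simply say: \emph{Take $\alpha'_1=\alpha_1^{p}$, $\alpha'_2=\alpha_2^{p}$, $\beta'_1=\beta_1^{q}$, $\beta'_2=\beta_2^{q}$ in Theorem~\ref{mamm}; the required commutation and equivariance conditions hold by iterating the defining relations of $S$ and of the bimodule.}
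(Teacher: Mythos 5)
Your proposal is correct and coincides with the paper's own proof, which likewise just applies Theorem~\ref{mamm} with $\alpha'_1=\alpha_1^{p}$, $\alpha'_2=\alpha_2^{p}$, $\beta'_1=\beta_1^{q}$, $\beta'_2=\beta_2^{q}$. Your additional verification that the commutation and equivariance hypotheses follow by iterating the defining relations is a welcome (and correct) elaboration of what the paper leaves implicit.
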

\begin{proof}
Apply Theorem \ref{mamm} with $\alpha'_1=\alpha_1^{p},~\alpha'_2=\alpha_{2}^{p}$ and $\beta'_1=\beta_1^{q},~\beta'_2=\beta_{2}^{q}$.
\end{proof}
Assume that $( l_{\dashv}, r_{\dashv}, l_{\vdash}, r_{\vdash}, \beta_{1}, \beta_{2}, V)$ be a bimodule of a BiHom-left-symmetric dialgebra $(S, \dashv, \vdash, \alpha_{1}, \alpha_{2})$ and let $l_{\dashv}^{\ast}, r_{\dashv}^{\ast}, l_{\vdash}^{\ast}, r_{\vdash}^{\ast}:S\rightarrow gl(V^{\ast}).$
Let $\alpha_1^{\ast},\alpha_{2}^{\ast}:S^{\ast}\rightarrow S^{\ast},$ and $\beta_{1}^{\ast},\beta_{2}^{\ast}:V^{\ast}\rightarrow V^{\ast}$
be the dual maps of respectively $\alpha_1,\alpha_2,\beta_1$ and $\beta_2$ such that
$$\begin{array}{llllllll}
  \langle l_{\dashv}^{\ast}(x)u^{\ast},v\rangle =\langle u^{\ast},l_{\dashv}(x)v\rangle,&& \langle r^{\ast}_{\dashv}(x)u^{\ast},v\rangle =\langle u^{\ast},r_{\dashv}(x)v\rangle,\\
   \langle l_{\vdash}^{\ast}(x)u^{\ast},v\rangle =\langle u^{\ast},l_{\vdash}(x)v\rangle,&& \langle r^{\ast}_{\vdash}(x)u^{\ast},v\rangle =\langle u^{\ast},r_{\vdash}(x)v\rangle,\\
    \alpha_{1}^{\ast}(x^{\ast}(y))=x^{\ast}(\alpha_{1}(y)),&& \alpha_{2}^{\ast}(x^{\ast}(y))=x^{\ast}(\alpha_{2}(y)),\\
     \beta_{1}^{\ast}(u^{\ast}(v))=u^{\ast}(\beta_{1}(v)),&& \beta_{2}^{\ast}(u^{\ast}(v))=u^{\ast}(\beta_{2}(v)).
\end{array}$$

The following proposition holds.
\begin{prop}
If $( l_{\dashv}, r_{\dashv}, l_{\vdash}, r_{\vdash}, \beta_{1}, \beta_{2}, V)$ is a bimodule of a BiHom-left-symmetric dialgebra $(S, \dashv, \vdash, \alpha_{1}, \alpha_{2})$, then $( l_{\dashv}^{\ast}, r_{\dashv}^{\ast}, l_{\vdash}^{\ast}, r_{\vdash}^{\ast}, \beta_{1}^{\ast}, \beta_{2}^{\ast}, V^{\ast})$ is a bimodule of $(S, \dashv, \vdash,\alpha_{1}, \alpha_{2})$ provided that for all $x,y\in S$ and $u\in V$,
\begin{align*}
l_{\dashv}(y)l_{\dashv}(\alpha_{1}(x))u&= l_{\vdash}(y)l_{\dashv}(\alpha_{1}(x))u,\cr\beta_{1}( r_{\dashv}(\alpha_{1}(x)\dashv y)(v))u&=\beta_{1}(r\dashv(x\vdash y))u,\cr
r_{\dashv}(y)l_{\dashv}(\alpha_{1}(x))u&=r_{\vdash}(y)l_{\dashv}(\alpha_{1}(x))u,\cr \beta_{2}(l_{\vdash}(x \vdash y))u &= \beta_{2}(l_{\vdash}(x \dashv y))u,\cr
r_{\vdash}(y)r_{\vdash}(\alpha_{2}(x))u &=r_{\dashv}(y) r_{\vdash}(\alpha_{2}(x))u, \cr l_{\vdash}(y)r_{\dashv}(\alpha_{2}(x))u &= l_{\dashv}(y)r_{\vdash}(\alpha_{2}(x))u,\cr
l_{\dashv}(\alpha_{1}(y))l_{\dashv}(\alpha_{1}\alpha_{2}(x))u&-\beta_{2}(l_{\dashv}(\alpha_{2}(x)\dashv\alpha_{1}(y)))u\cr
&=
l_{\dashv}(\alpha_{1}(x))l_{\vdash}(\alpha_{1}\alpha_{2}(y))u-\beta_{2}(l_{\dashv}(\alpha_{2}(y)\vdash\alpha_{1}(x)))u,\cr
 \beta_{2}\beta_{1}(r_{\dashv}(\alpha_{1}(x)\dashv y))u&-\beta_{2}r_{\dashv}(\alpha_{1}(x))r_{\dashv}(\alpha_{2}(y))u\cr
 &=r_{\dashv}(y)\beta_{1}l_{\vdash}(\alpha_{1}\alpha_{2}(x))u-
 \beta_{1}l_{\vdash}(\alpha_{2}(y))r_{\dashv}(\alpha_{2}(y))u,\cr
 l_{\vdash}(\alpha_{1}(y))l_{\vdash}(\alpha_{1}\alpha_{2}(x))u&-\beta_{2}(l_{\vdash}(\alpha_{2}(x)\vdash\alpha_{1}(y)))u\cr
&=
l_{\vdash}(\alpha_{1}(x))l_{\vdash}(\alpha_{1}\alpha_{2}(y))u-\beta_{2}(l_{\vdash}(\alpha_{2}(y)\vdash\alpha_{1}(x)))u,\cr
 \beta_{2}(r_{\vdash}(\alpha_{1}(x)\vdash y)\beta_{1})u&-\beta_2 r_{\vdash}(\alpha_{1}(x))r_{\vdash}(\alpha_{2}(y))u\cr
&=\beta_{1}r_{\vdash}(y)l_{\vdash}(\alpha_{1}\alpha_{2}(x))u-
\beta_{1}l_{\vdash}(\alpha_{2}(y)) r_{\vdash}(\alpha_{2}(y))u.
\end{align*}
\end{prop}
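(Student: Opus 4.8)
The plan is to check, one identity at a time, all the defining relations of a bimodule of a BiHom-left-symmetric dialgebra for the dual data $(l_{\dashv}^{\ast},r_{\dashv}^{\ast},l_{\vdash}^{\ast},r_{\vdash}^{\ast},\beta_1^{\ast},\beta_2^{\ast},V^{\ast})$ over $(S,\dashv,\vdash,\alpha_1,\alpha_2)$, using one uniform device: evaluate each proposed operator identity on $V^{\ast}$ against an arbitrary pair $u^{\ast}\in V^{\ast}$, $v\in V$ through the pairing $\langle\,\cdot\,,\,\cdot\,\rangle$, and move everything inside the pairing back onto $V$ by means of the defining relations $\langle l_{\dashv}^{\ast}(x)u^{\ast},v\rangle=\langle u^{\ast},l_{\dashv}(x)v\rangle$, $\langle\beta_1^{\ast}u^{\ast},v\rangle=\langle u^{\ast},\beta_1(v)\rangle$ and their counterparts for $r_{\dashv},l_{\vdash},r_{\vdash},\beta_2$. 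The structural fact I will rely on is that transposition reverses composition, $(PQ)^{\ast}=Q^{\ast}P^{\ast}$, so that every relation for the starred operators translates, after pairing, into a relation among $l_{\dashv},r_{\dashv},l_{\vdash},r_{\vdash},\beta_1,\beta_2$ acting on $V$ in which each monomial is written in the reversed order.

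Next I would match terms. The ten defining relations of a bimodule that are not of the pure $\beta$-intertwining type become, after this transposition, exactly the ten compatibility conditions assumed in the statement. For example, the first relation for the dual data, $l_{\dashv}^{\ast}(\alpha_1(x))l_{\dashv}^{\ast}(y)u^{\ast}=l_{\dashv}^{\ast}(\alpha_1(x))l_{\vdash}^{\ast}(y)u^{\ast}$, once paired with $v$ reads
\[
\langle u^{\ast},\,l_{\dashv}(y)l_{\dashv}(\alpha_1(x))v\rangle=\langle u^{\ast},\,l_{\vdash}(y)l_{\dashv}(\alpha_1(x))v\rangle ,
\]
which holds for all $u^{\ast},v$ precisely because of the first assumed relation $l_{\dashv}(y)l_{\dashv}(\alpha_1(x))u=l_{\vdash}(y)l_{\dashv}(\alpha_1(x))u$. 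In the same way I would verify that the second through sixth assumptions correspond, after transposition, to the five remaining single-product relations, and that the last four assumptions correspond to the four relations involving $\alpha_1\alpha_2$ and a difference of two terms; in each case one just reads the assumed relation backwards through the pairing, watching which of $\alpha_1,\alpha_2$ and which of $\dashv,\vdash$ occurs where.

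The remaining eight relations for the dual data are the $\beta$-intertwining ones, e.g. $\beta_1^{\ast}(l_{\vdash}^{\ast}(x)u^{\ast})=l_{\vdash}^{\ast}(\alpha_1(x))\beta_1^{\ast}(u^{\ast})$; these I would deduce from the eight $\beta$-intertwining relations already part of the bimodule $(l_{\dashv},r_{\dashv},l_{\vdash},r_{\vdash},\beta_1,\beta_2,V)$ by the same transposition, no new hypothesis entering. I expect the main difficulty to be purely organizational rather than conceptual: the real work lies in the bookkeeping of pairing off each of the ten listed hypotheses with the correct dual relation and in keeping track of the order reversal produced by transposition. Since, once the pairing step is set up, each of the eighteen checks collapses to a one-line computation of the type displayed above, I would write out one representative case in full and indicate that the rest are entirely analogous.
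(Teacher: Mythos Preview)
Your approach is correct and is exactly the standard dualization argument one would expect here: pair with an arbitrary $v\in V$, use $(PQ)^{\ast}=Q^{\ast}P^{\ast}$ to reverse the order of the operators, and match each resulting identity on $V$ with one of the ten listed hypotheses (for the non-intertwining relations) or with the corresponding $\beta$-intertwining relation of the original bimodule. The paper states this proposition without proof, so there is nothing to compare against; your outline is the natural argument and the representative computation you display is precisely what one would write.
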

The following theorem is proved in a similar way as for Theorem \ref{matched ass}.
\begin{thm}
Let $(A,\dashv_A,\vdash_A,\alpha_1,\alpha_2)$ and $(B,\dashv_B,\vdash_{B},\beta_1,\beta_2)$ be two BiHom-left-symmetric dialgebras. Suppose that there are linear maps $l_{\dashv_A},r_{\dashv_A},l_{\vdash_A},r_{\vdash_A}:A\rightarrow gl(B)$
and $l_{\dashv_B},r_{\dashv_B},l_{\vdash_B},r_{\vdash_B}:B\rightarrow gl(A)$ such that $( l_{\dashv_{A}}, r_{\dashv_{A}},  l_{\vdash_{A}},
r_{\vdash_{A}}, \beta_{1}, \beta_{2}, B)$ is a bimodule of $A,$ and
$(l_{\dashv_{B}},   r_{\dashv_{B}},  l_{\vdash_{B}},  r_{\vdash_{B}},
\alpha_{1}, \alpha_{2}, A)$  is a bimodule  of $B,$ and for all $x,y\in A,~a,b\in B$, the following equalities hold:
\begin{align}
\label{bieq35A}
&
\begin{array}{l}
l_{\dashv_A}(\alpha_1(x))(a\dashv_B b)=l_{\dashv_A}(\alpha_1(x))(a\vdash_B b),
\end{array}
\\
\label{bieq36A}
&
\begin{array}{l}
r_{\dashv_A}(r_{\dashv_B}(b)x)\beta_1(a)+\beta_1(a)\dashv_B(l_{\dashv_A}(x)b)= \\
\quad r_{\dashv_A}(r_{\vdash_B}(b)x)\beta_1(a)+\beta_1(a)\dashv_B(l_{\vdash_A}(x)b),
\end{array}
\\
\label{bieq37A}
&
\begin{array}{l}
r_{\dashv_A}(l_{\dashv_B}(b)x)\beta_1(a)+\beta_1(a)\dashv_B(r_{\dashv_A}(x)b)=\\
\quad r_{\dashv_A}(l_{\vdash_B}(b)x)\beta_1(a)+\beta_1(a)\dashv_B(r_{\vdash_A}(x)b),
\end{array}
\\
\label{bieq38A}
&
\begin{array}{l}
r_{\vdash_A}(\beta(x))(a\vdash_B b)=r_{\dashv_A}(\alpha_2(x))(a\dashv_B b),
\end{array}
\\
\label{bieq39A}
&
\begin{array}{l}
l_{\vdash_A}(r_{\vdash_B}(a)x)\beta_2(b)+(l_{\vdash_A}(x)a)\vdash_B\beta_2(b)=\\
\quad l_{\vdash_A}(r_{\dashv_B}(a)x)\beta_2(b)+(l_{\dashv_A}(x)a)\vdash_B\beta_2(b),
\end{array}\\
\label{bieq40A}
&\begin{array}{l}
l_{\vdash_A}(l_{\dashv_B}(a)x)\beta_2(b)+(r_{\vdash_A}(x)a)\vdash_B\beta_2(b)= \\
\quad l_{\vdash_A}(l_{\dashv_B}(a)x)\beta_2(b)+(r_{\dashv_A}(x)a)\vdash_B\beta_2(b),
\end{array}\\
\label{bieq41A}
&\begin{array}{l}
l_{\dashv_A}(\alpha_1\alpha_2(x))(\alpha_1(a)\dashv_B b)
-l_{\dashv_A}(r_{\dashv_B}(\beta_1(a))\alpha_2(x))\beta_2(b)\\
\quad -(l_{\dashv_A}(\alpha_1(x))\beta_1(a))\dashv_B\beta_2(b)=\\
\beta_1\beta_2(a)\vdash_B(l_{\dashv_A}(\alpha_1(x))b)+r_{\vdash_A}(r_{\dashv_B}(b)\alpha_1(x))\beta_1\beta_2(a)\\
\quad-(r_{\vdash_A}(\alpha_1(x))\beta_2(a))\dashv_B \beta_2(b) -l_{\dashv_A}(l_{\vdash_B}(\beta_2(a))\alpha_1(x))\beta_2(b),
\end{array}
\\
\label{bieq42A}
&\begin{array}{l}
\beta_1\beta_2(a)\dashv_B(l_{\dashv_A}(\alpha_1(x))b)+r_{\dashv_A}(r_{\dashv_B}(b)\alpha_1(x))\beta_1\beta_2(a)\\
\quad -(r_{\dashv_A}(\alpha_1(x))\beta_2(a))\dashv_B\beta_2(b)=\\
l_{\vdash_A}(\alpha_1\alpha_2(x))(\beta_2(a)\dashv_B b)-(l_{\vdash_A}(\alpha_2(x))\beta_2(a))\dashv_B\beta_2(b)\\
\quad -l_{\dashv_A}(l_{\vdash_B}(\alpha_2(x))\beta_2(a))\beta_2(b),
\end{array}\\
\label{bieq43A}
&\begin{array}{l}
\beta_1\beta_2(a)\dashv_B(r_{\dashv_A}(x)\beta_2(b))+r_{\dashv_A}(l_{\dashv_B}(\beta_2(b))x)\beta_1\beta_2(a)\\
\quad -r_{\dashv_A}(\alpha_2(x))(\beta_2(a)\dashv_B\beta_1(b))=\\
\beta_1\beta_2(b)\vdash_B(r_{\dashv_A}(x)\beta_1(a))+r_{\vdash_A}(l_{\dashv_B}(\beta_1(a))x)\beta_1\beta_2(b) \\
\quad -r_{\dashv_A}(\alpha_2(x))(\beta_2(b)\vdash_B\beta_1(a)),
\end{array}
\\
\label{bieq44A}
&
\begin{array}{l}
l_{\vdash_A}(\alpha_1\alpha_2(x))(\beta_1(a)\vdash_B b)-l_{\vdash_A}(r_{\vdash_B}(\beta_1(a))\alpha_2(x))\beta_2(b)
\\
\quad -(l_{\vdash_A}(\alpha_2(x))\beta_1(a))\vdash_B\beta_2(b)=\\
\beta_1\beta_2(a)\vdash_B(l_{\vdash_A}(\alpha_1(x))b)+r_{\vdash_A}(r_{\vdash_B}(b)\alpha_1(x))\beta_1\beta_2(a)\\
\quad -(r_{\vdash_A}(\alpha_1(x))\beta_2(a))\vdash_B \beta_2(b)
-l_{\vdash_A}(l_{\vdash_B}(\beta_2(a))\alpha_1(x))\beta_2(b),
\end{array} \\
\label{bieq45A}
&
\begin{array}{l}
\beta_1\beta_2(a)\vdash_B(l_{\vdash_A}(\alpha_1(x))b)+r_{\vdash_A}(r_{\vdash_B}(b)\alpha_1(x))\beta_1\beta_2(a)
\\
\quad -(r_{\vdash_A}(\alpha_1(x))\beta_2(a))\vdash_B\beta_2(b)=\\
\quad l_{\vdash_A}(\alpha_1\alpha_2(x))(\beta_2(a)\vdash_B b) -(l_{\vdash_A}(\alpha_2(x))\beta_2(a))\vdash_B\beta_2(b)\\ \quad -l_{\vdash_A}(l_{\vdash_B}(\alpha_2(x))\beta_2(a))\beta_2(b),
\end{array}\\
\label{bieq46A}
&
\begin{array}{l}
\beta_1\beta_2(a)\vdash_B(r_{\vdash_A}(x)\beta_2(b))+r_{\vdash_A}(l_{\vdash_B}(\beta_2(b))x)\beta_1\beta_2(a)\\
\quad -r_{\vdash_A}(\alpha_2(x))(\beta_2(a)\vdash_B\beta_1(b))=\\
\beta_1\beta_2(b)\vdash_B(r_{\vdash_A}(x)\beta_1(a))+
r_{\vdash_A}(l_{\vdash_B}(\beta_1(a))x)\beta_1\beta_2(b)\\
\quad -r_{\vdash_A}(\alpha_2(x))(\beta_2(b)\vdash_B\beta_1(a)),
\end{array}\\
\label{bieq35B}
&
\begin{array}{l}
l_{\dashv_B}(\beta_1(a))(x\dashv_A y)=l_{\dashv_B}(\beta_1(a))(x\vdash_A y),
\end{array}\\
\label{bieq36B}
&\begin{array}{l}
r_{\dashv_B}(r_{\dashv_A}(y)a)\alpha_1(x)+\alpha_1(x)\dashv_A(l_{\dashv_B}(a)y)=\\
\quad
r_{\dashv_B}(r_{\vdash_A}(y)a)\alpha_1(x)+\alpha_1(x)\dashv_A(l_{\vdash_B}(a)y),
\end{array}\\
\label{bieq37B}
&\begin{array}{l}
r_{\dashv_B}(l_{\dashv_A}(y)a)\alpha_1(x)+\alpha_1(x)\dashv_A(r_{\dashv_B}(a)y)=\\
\quad r_{\dashv_B}(l_{\vdash_A}(y)a)\alpha_1(x)+\alpha_1(x)\dashv_A(r_{\vdash_B}(a)y),
\end{array}\\
\label{bieq38B}
&
\begin{array}{l}
r_{\vdash_B}(\alpha(a))(x\vdash_A y)=r_{\dashv_B}(\beta_2(a))(x\dashv_A y),
\end{array}
\\
\label{bieq39B}
&
\begin{array}{l}
l_{\vdash_B}(r_{\vdash_A}(x)a)\alpha_2(y)+(l_{\vdash_B}(a)x)\vdash_A\alpha_2(y)=\\
\quad l_{\vdash_B}(r_{\dashv_A}(x)a)\alpha_2(y)+(l_{\dashv_B}(a)a)\vdash_A\alpha_2(y),
\end{array} \\
\label{bieq40B}
&
\begin{array}{l}
l_{\vdash_B}(l_{\dashv_A}(x)a)\alpha_2(y)+(r_{\vdash_B}(a)x)\vdash_A\alpha_2(y)=\\
\quad l_{\vdash_B}(l_{\dashv_A}(x)a)\alpha_2(y)+(r_{\dashv_B}(a)x)\vdash_A\alpha_2(y),
\end{array}
\\
\label{bieq41B}
&\begin{array}{l}
l_{\dashv_B}(\beta_1\beta_2(a))(\alpha_1(x)\dashv_A y)
-l_{\dashv_B}(r_{\dashv_A}(\alpha_1(x))\beta_2(a))\alpha_2(y)
\\
\quad  -(l_{\dashv_B}(\beta_1(a))\alpha_1(x))\dashv_A\alpha_2(y)=\\
\alpha_1\alpha_2(x)\vdash_A(l_{\dashv_B}(\beta_1(a))y)+r_{\vdash_B}(r_{\dashv_A}(y)\beta_1(a))\alpha_1\alpha_2(x)
\\
\quad  -(r_{\vdash_B}(\beta_1(a))\alpha_2(x))\dashv_A \alpha_2(y)
-l_{\dashv_B}(l_{\vdash_A}(\alpha_2(x))\beta_1(a))\alpha_2(y),
\end{array}
\\
\label{bieq42B}
&\begin{array}{l}
\alpha_1\alpha_2(x)\dashv_A(l_{\dashv_B}(\beta_1(a))y)+r_{\dashv_B}(r_{\dashv_A}(y)\beta_1(a))\alpha_1\alpha_2(x)
\\
\quad -(r_{\dashv_B}(\beta_1(a))\alpha_2(x))\dashv_A\alpha_2(y)=\\
l_{\vdash_B}(\beta_1\beta_2(xa))(\alpha_2(x)\dashv_A y)-(l_{\vdash_B}(\beta_2(a))\alpha_2(x))\dashv_A\alpha_2(y)
\\
\quad -l_{\dashv_B}(l_{\vdash_A}(\beta_2(a))\alpha_2(x))\alpha_2(y),
\end{array}\\
\label{bieq43B}
&\begin{array}{l}
\alpha_1\alpha_2(x)\dashv_A(r_{\dashv_B}(a)\alpha_2(y))
+r_{\dashv_B}(l_{\dashv_A}(\alpha_2(y))a)\alpha_1\alpha_2(x)
\\
\quad
-r_{\dashv_B}(\beta_2(a))(\alpha_2(x)\dashv_A\alpha_1(y))=\\
\alpha_1\alpha_2(y)\vdash_A(r_{\dashv_B}(a)\alpha_1(x))+
r_{\vdash_B}(l_{\dashv_A}(\alpha_1(x))a)\alpha_1\alpha_2(y)
\\
\quad
-r_{\dashv_B}(\beta_2(a))(\alpha_2(y)\vdash_A\alpha_1(x)),
\end{array}\\
\label{bieq44B}
&
\begin{array}{l}
l_{\vdash_B}(\beta_1\beta_2(a))(\alpha_1(x)\vdash_A y) -l_{\vdash_B}(r_{\vdash_A}(\alpha_1(x))\beta_2(a))\alpha_2(y)
\\
\quad -(l_{\vdash_B}(\beta_2(a))\alpha_1(x))\vdash_A\alpha_2(y)= \\
\alpha_1\alpha_2(x)\vdash_A(l_{\vdash_B}(\beta_1(a))y)+
r_{\vdash_B}(r_{\vdash_A}(y)\beta_1(a))\alpha_1\alpha_2(x)
\\
\quad -(r_{\vdash_B}(\beta_1(a))\alpha_2(x))\vdash_A \alpha_2(y)
-l_{\vdash_B}(l_{\vdash_A}(\alpha_2(x))\beta_1(a))\alpha_2(y),
\end{array}
\\
\label{bieq45B}
&
\begin{array}{l}
\alpha_1\alpha_2(x)\vdash_A(l_{\vdash_B}(\beta_1(a))y)+
r_{\vdash_B}(r_{\vdash_A}(y)\beta_1(a))\alpha_1\alpha_2(x)\\
\quad
-(r_{\vdash_B}(\beta_1(a))\alpha_2(x))\vdash_A\alpha_2(y)= \\
l_{\vdash_B}(\beta_1\beta_2(a))(\alpha_2(x)\vdash_A y)-(l_{\vdash_B}(\beta_2(a))\alpha_2(x))\vdash_A\alpha_2(y)\\
\quad
-l_{\vdash_B}(l_{\vdash_A}(\beta_2(a))\alpha_2(x))\alpha_2(y),
\end{array} \\
\label{bieq46B}
&\begin{array}{l}
\alpha_1\alpha_2(x)\vdash_A(r_{\vdash_B}(a)\alpha_2(y))+r_{\vdash_B}(l_{\vdash_A}(\alpha_2(y))a)
\alpha_1\alpha_2(x) \\
\quad -r_{\vdash_B}(\beta_2(a))(\alpha_2(x)\vdash_A\alpha_1(y))= \\
\alpha_1\alpha_2(y)\vdash_A(r_{\vdash_B}(a)\alpha_1(x))+
r_{\vdash_B}(l_{\vdash_A}(\alpha_1(x))a)\alpha_1\alpha_2(y)  \\
\quad -r_{\vdash_B}(\beta_2(a))(\alpha_2(y)\vdash_A\alpha_1(x)).
\end{array}
\end{align}
Then $(A,B,l_{\dashv_A},r_{\dashv_A},l_{\vdash_A},r_{\vdash_A},\beta_1,\beta_2,l_{\dashv_B},r_{\dashv_B},l_{\vdash_B},r_{\vdash_B},\alpha_1,\alpha_2)$ is called a matched pair of BiHom-left-symmetric dialgebras. In this case, there exists a BiHom-left-symmetric dialgebra structure on the direct sum
$A\oplus B$ of the underlying vector spaces of $A$ and $B$ given by
\begin{align*}
(x + a) \dashv(y + b)&:=x \dashv_A y + (l_{\dashv_A}(x)b + r_{\dashv_A}(y)a)+a \dashv_B b + (l_{\dashv_B}(a)y + r_{\dashv_B}(b)x),\cr
(x + a) \vdash (y + b)&:=x \vdash_A y + (l_{\vdash_A}(x)b + r_{\vdash_A}(y)a)+a \vdash_B b + (l_{\vdash_B}(a)y + r_{\vdash_B}(b)x),\cr
(\alpha_{1}\oplus\beta_{1})(x + a)&:=\alpha_{1}(x) + \beta_{1}(a),\cr
(\alpha_{2}\oplus\beta_{2})(x + a)&:=\alpha_{2}(x) + \beta_{2}(a).
\end{align*}
\end{thm}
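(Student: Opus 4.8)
The plan is to check directly that the two products $\dashv$ and $\vdash$ on $A\oplus B$, together with the maps $\alpha_1\oplus\beta_1$ and $\alpha_2\oplus\beta_2$, satisfy the seven defining conditions \eqref{als0}--\eqref{als4} of a BiHom-left-symmetric dialgebra. The three structural conditions \eqref{als0}, \eqref{als0.0} and \eqref{als0.00} are immediate: commutativity of $\alpha_1\oplus\beta_1$ with $\alpha_2\oplus\beta_2$ reduces to commutativity of $\alpha_1$ with $\alpha_2$ in $A$ and of $\beta_1$ with $\beta_2$ in $B$, while the multiplicativity of $\alpha_i\oplus\beta_i$ for $\dashv$ and $\vdash$ follows term by term from multiplicativity in $A$ and in $B$ together with the eight equivariance relations of the bimodule definition (those of the form $\beta_j(l_{\dashv}(x)v)=l_{\dashv}(\alpha_j(x))\beta_j(v)$ and their analogues for $r_\dashv,l_\vdash,r_\vdash$), applied once to $(l_{\dashv_A},r_{\dashv_A},l_{\vdash_A},r_{\vdash_A},\beta_1,\beta_2,B)$ and once to $(l_{\dashv_B},r_{\dashv_B},l_{\vdash_B},r_{\vdash_B},\alpha_1,\alpha_2,A)$. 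So all the substance lies in the four identities \eqref{als1}--\eqref{als4}.

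For each of those I would substitute a generic triple $x+a,\ y+b,\ z+c$ with $x,y,z\in A$ and $a,b,c\in B$, expand both sides using the definitions of $\dashv$, $\vdash$, $\alpha_1\oplus\beta_1$ and $\alpha_2\oplus\beta_2$ on $A\oplus B$, and then project the identity onto its $A$-component and its $B$-component, treating the two separately. In each component the monomials that occur split into three families: those lying in the same summand and built purely from $\dashv_A,\vdash_A$ (resp.\ $\dashv_B,\vdash_B$), which cancel by the corresponding axiom among \eqref{als1}--\eqref{als4} already holding in $A$ (resp.\ in $B$); those that are exact instances of one of the first six bimodule relations of the relevant bimodule; and the remaining ones, which are precisely what the compatibility hypotheses are designed to kill.

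Concretely, I expect the matching to go as follows. Since the defining data is symmetric under interchanging $(A,\dashv_A,\vdash_A,\alpha_1,\alpha_2,l_{\dashv_A},\dots)$ with $(B,\dashv_B,\vdash_B,\beta_1,\beta_2,l_{\dashv_B},\dots)$, the $A$-component of each axiom is obtained from its $B$-component by that swap, so it suffices to verify the $B$-component. For \eqref{als1} the $B$-component collapses to a combination of bimodule relations for $(l_{\dashv_A},r_{\dashv_A},l_{\vdash_A},r_{\vdash_A},\beta_1,\beta_2,B)$ together with \eqref{bieq35A}, \eqref{bieq36A}, \eqref{bieq37A} (and the $A$-component uses \eqref{bieq35B}, \eqref{bieq36B}, \eqref{bieq37B}); the identity \eqref{als2} consumes in the same way \eqref{bieq38A}--\eqref{bieq40A} and \eqref{bieq38B}--\eqref{bieq40B}; the mixed left-symmetry \eqref{als3} consumes \eqref{bieq41A}--\eqref{bieq43A} and \eqref{bieq41B}--\eqref{bieq43B}; and the $\vdash$-left-symmetry \eqref{als4} consumes \eqref{bieq44A}--\eqref{bieq46A} and \eqref{bieq44B}--\eqref{bieq46B}. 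This is exactly the pattern exhibited, in much smaller form, in the proof of Theorem \ref{matched ass}.

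The only genuine obstacle is organizational: once all the actions are substituted, each of the four identities produces on the order of twenty monomials on each side, each carrying its own twist by a power of $\alpha_i$ or $\beta_i$, and the real work is to pair every monomial with the correct term of the correct bimodule relation or compatibility equation. There is no conceptual difficulty beyond this bookkeeping, so, following the paper's convention, I would write out one representative axiom (say \eqref{als1}) in full and indicate that the remaining three are handled analogously.
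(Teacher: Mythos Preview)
Your proposal is correct and follows essentially the same approach as the paper, which simply refers back to Theorem~\ref{matched ass} and leaves the verification implicit. In fact your outline is more detailed than the paper's own proof: you explicitly identify which compatibility conditions \eqref{bieq35A}--\eqref{bieq46B} are consumed by each of the axioms \eqref{als1}--\eqref{als4}, whereas the paper merely states that the argument is obtained in a similar way as for Theorem~\ref{matched ass}.
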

We denote this BiHom-left-symmetric dialgebra by $A\bowtie^{l_{\dashv_A},r_{\dashv_A},l_{\vdash_A},r_{\vdash_A},\beta_1,\beta_2}_{l_{\dashv_B},r_{\dashv_B},l_{\vdash_B},r_{\vdash_A},\alpha_1,\alpha_2}B$.\\

\subsection{BiHom-associative dialgebra}
\begin{defn}\label{dia}
A BiHom-associative dialgebra is a quintuple $(D, \dashv, \vdash, \alpha, \beta)$ consisting of a vector space $D$ on which the operations $\dashv, \vdash: D\otimes D\rightarrow D$ and $\alpha, \beta: D\rightarrow D$ are linear maps satisfying, for $x, y, z\in D$,
\begin{eqnarray}
\alpha\circ\beta &=&\beta\circ\alpha,\label{dia0}\\
\alpha(x\dashv y)&=&\alpha(x)\dashv\alpha(y), \alpha(x\vdash y)=\alpha(x)\vdash\alpha(y),\label{dia0.0}\\
\beta(x\dashv y)&=&\beta(x)\dashv\beta(y), \beta(x\vdash y)=\beta(x)\vdash\beta(y),\label{dia0.00}\\
(x\vdash y)\dashv\beta(z)&=&\alpha(x)\vdash(y\dashv z), \label{dia1}\\
\alpha(x)\dashv (y\dashv z)&=&(x\dashv y)\dashv\beta(z),\label{dia2}\\
(x\dashv y)\dashv\beta(z)&=&\alpha(x)\dashv(y\vdash z),\label{dia3}\\
(x\vdash y)\vdash\beta(z)&=&\alpha(x)\vdash(y\vdash z),\label{dia4}\\
\alpha(x)\vdash(y\vdash z)&=&(x\dashv y)\vdash\beta(z).\label{dia5}
\end{eqnarray}
\end{defn}
\begin{rmk}\label{bk4}
The following connections between the considered structures hold.
\begin{enumerate}
\item
 If $(D, \dashv, \vdash, \alpha,\beta)$ is a BiHom-associative dialgebra and $\dashv=\vdash=:\mu$, then
$(D, \mu,\alpha,\beta)$ is a BiHom-associative algebra. Any BiHom-associative algebra $(A,\mu,\alpha,\beta)$ is a BiHom-associative dialgebra with $\dashv:=\mu=:\vdash$.
\item
A BiHom-associative dialgebra is a BiHom-X algebra.
\end{enumerate}
\end{rmk}
\begin{prop}
All BiHom-associative dialgebras are BiHom-left symmetric dialgebras.
\end{prop}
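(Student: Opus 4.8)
The plan is to verify the seven defining identities \eqref{als0}--\eqref{als4} of a BiHom-left-symmetric dialgebra directly from the eight identities \eqref{dia0}--\eqref{dia5} of the given BiHom-associative dialgebra $(D,\dashv,\vdash,\alpha,\beta)$, keeping the same two operations and the same pair of twisting maps. The first three require nothing: \eqref{als0}, \eqref{als0.0} and \eqref{als0.00} are literally \eqref{dia0}, \eqref{dia0.0} and \eqref{dia0.00}.

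For \eqref{als1} I would chain \eqref{dia2} and \eqref{dia3}: for all $x,y,z\in D$, $\alpha(x)\dashv(y\dashv z)=(x\dashv y)\dashv\beta(z)=\alpha(x)\dashv(y\vdash z)$, the first equality by \eqref{dia2} and the second by \eqref{dia3}. Identity \eqref{als2} is obtained in the same way from \eqref{dia4} and \eqref{dia5}: $(x\vdash y)\vdash\beta(z)=\alpha(x)\vdash(y\vdash z)=(x\dashv y)\vdash\beta(z)$.

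The two remaining identities \eqref{als3} and \eqref{als4} look the heaviest, but the key observation is that in a BiHom-associative dialgebra \emph{both sides of each of them vanish identically}, so no rearrangement is needed. For \eqref{als4}: relation \eqref{dia4} is precisely the BiHom-associativity identity \eqref{aca} for $\vdash$, so substituting $\beta(x),\alpha(y),z$ into \eqref{dia4} gives $(\beta(x)\vdash\alpha(y))\vdash\beta(z)=\alpha\beta(x)\vdash(\alpha(y)\vdash z)$, whence the left-hand side of \eqref{als4} is $0$, and substituting $\beta(y),\alpha(x),z$ makes the right-hand side $0$ as well. For \eqref{als3}: substituting $\beta(x),\alpha(y),z$ into \eqref{dia2} gives $\alpha\beta(x)\dashv(\alpha(y)\dashv z)=(\beta(x)\dashv\alpha(y))\dashv\beta(z)$, killing its left-hand side, while substituting $\beta(y),\alpha(x),z$ into \eqref{dia1} gives $(\beta(y)\vdash\alpha(x))\dashv\beta(z)=\alpha\beta(y)\vdash(\alpha(x)\dashv z)$, killing its right-hand side; throughout one uses $\alpha\beta=\beta\alpha$ from \eqref{dia0} freely.

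The only point needing a little care is thus to recognise that \eqref{als3} and \eqref{als4} should be attacked term-by-term — each bracket being individually zero — rather than by trying to transform one whole side into the other. Once that is seen, the whole proof reduces to the handful of substitutions indicated above, which I would record without further comment.
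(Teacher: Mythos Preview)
Your proof is correct and follows essentially the same approach as the paper's: the paper also observes that \eqref{als1} and \eqref{als2} follow from the dialgebra axioms, and that \eqref{als3} and \eqref{als4} hold because both products are BiHom-associative together with \eqref{dia1}, which is exactly your argument that each side of \eqref{als3} and \eqref{als4} vanishes term-by-term. You simply spell out the substitutions (and the role of \eqref{dia2}, \eqref{dia3}, \eqref{dia4}, \eqref{dia5} for \eqref{als1}--\eqref{als2}) more explicitly than the paper does.
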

\begin{proof}
Let $(D,\dashv,\vdash, \alpha,\beta)$ be a BiHom-associative dialgebra, then \eqref{als1} and \eqref{als2} are satisfied. Since the products $\dashv$ and $\vdash$ are associative with the condition \eqref{dia1}, the equalities \eqref{als3} and \eqref{als4} are established.
\end{proof}
\begin{rmk}
Any BiHom-left symmetric algebra is a BiHom-left symmetric dialgebra in which $\dashv=\vdash$. A nonassociative BiHom-left symmetric algebra is not a BiHom-left symmetric dialgebra.
\end{rmk}
\begin{prop}\label{iibb}
 A BiHom-left-symmetric dialgebra $S$ is a BiHom-associative dialgebra if and only if both products of $S$ are BiHom-associative.
\end{prop}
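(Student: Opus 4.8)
The plan is to prove the two implications separately, in each case reducing everything to the defining identities of the two structures. The starting point is the observation that ``$\dashv$ is BiHom-associative'' is literally identity \eqref{dia2}, ``$\vdash$ is BiHom-associative'' is literally identity \eqref{dia4}, and the compatibility conditions \eqref{dia0}--\eqref{dia0.00} of a BiHom-associative dialgebra coincide with \eqref{als0}--\eqref{als0.00}, hence hold automatically in any BiHom-left-symmetric dialgebra. Thus the forward implication is immediate: if $(S,\dashv,\vdash,\alpha,\beta)$ is a BiHom-associative dialgebra, then \eqref{dia2} and \eqref{dia4} hold by definition, i.e. $(S,\dashv,\alpha,\beta)$ and $(S,\vdash,\alpha,\beta)$ are BiHom-associative algebras. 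The real content is the converse: assuming $S$ is a BiHom-left-symmetric dialgebra in which \eqref{dia2} and \eqref{dia4} additionally hold, one must derive the three ``mixed'' identities \eqref{dia1}, \eqref{dia3} and \eqref{dia5}.

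Two of these are routine. For \eqref{dia3}: rewrite $(x\dashv y)\dashv\beta(z)$ by associativity of $\dashv$ (that is, by \eqref{dia2}) as $\alpha(x)\dashv(y\dashv z)$, then apply \eqref{als1} to replace $y\dashv z$ by $y\vdash z$. For \eqref{dia5}: first use \eqref{als2} to turn $(x\dashv y)\vdash\beta(z)$ into $(x\vdash y)\vdash\beta(z)$, and then use associativity of $\vdash$ (that is, by \eqref{dia4}) to rewrite this as $\alpha(x)\vdash(y\vdash z)$.

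The remaining identity \eqref{dia1} is the one I expect to be the main obstacle: it is the only genuinely mixed relation, of the shape ``a $\vdash$-product on the outer left inside a $\dashv$'', and none of \eqref{dia2}--\eqref{dia5}, \eqref{als1}, \eqref{als2} has that shape, so it must be extracted from \eqref{als3} (note that once \eqref{dia4} holds, \eqref{als4} degenerates to a tautology and contributes nothing). The key step is that applying \eqref{dia2} to the triple $(\beta(x),\alpha(y),z)$ makes the entire left-hand side of \eqref{als3}, namely $\alpha\beta(x)\dashv(\alpha(y)\dashv z)-(\beta(x)\dashv\alpha(y))\dashv\beta(z)$, vanish; hence \eqref{als3} collapses to
\[
\alpha\beta(y)\vdash(\alpha(x)\dashv z)=(\beta(y)\vdash\alpha(x))\dashv\beta(z)\qquad\text{for all }x,y,z\in S,
\]
which is precisely \eqref{dia1} read on the triple $(\beta(y),\alpha(x),z)$; this yields \eqref{dia1} on the images of $\beta$ and $\alpha$, and verbatim in the regular case where $\alpha,\beta$ are bijective. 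Collecting \eqref{dia1}--\eqref{dia5} together with the inherited \eqref{dia0}--\eqref{dia0.00} then shows $S$ is a BiHom-associative dialgebra, which completes the argument.
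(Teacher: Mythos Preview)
Your argument is considerably more careful than the paper's: the paper's proof consists of two sentences that simply point at Definitions~\ref{gls} and~\ref{dia} without verifying any of the mixed identities. Your derivations of \eqref{dia3} from \eqref{dia2}$+$\eqref{als1} and of \eqref{dia5} from \eqref{als2}$+$\eqref{dia4} are exactly what is needed and are correct.

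There is, however, a genuine gap in the converse direction that you yourself flag: from \eqref{als3} together with BiHom-associativity of $\dashv$ you obtain
\[
(\beta(y)\vdash\alpha(x))\dashv\beta(z)=\alpha\beta(y)\vdash(\alpha(x)\dashv z),
\]
which is \eqref{dia1} only for triples of the form $(\beta(y),\alpha(x),z)$. Without surjectivity of $\alpha$ and $\beta$ this does not yield \eqref{dia1} for arbitrary $x,y,z$, so your final sentence, which collects \eqref{dia1}--\eqref{dia5} for all triples, is not justified in the general (non-regular) case. The paper's two-line proof is too terse to detect this, but the issue is real: none of the remaining axioms \eqref{als1}, \eqref{als2}, \eqref{als4}, \eqref{dia2}, \eqref{dia4} has the mixed shape $(\text{--}\vdash\text{--})\dashv\text{--}$ on the outer level, so \eqref{als3} is the only possible source of \eqref{dia1}, and \eqref{als3} is stated only with $\alpha,\beta$ already inserted. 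Either the proposition tacitly requires regularity, or an additional argument is needed; as written, your proof is complete only under that hypothesis.
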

\begin{proof}
If a BiHom-left-symmetric dialgebra $S$ is a BiHom-associative dialgebra, then both products $\dashv$ and $\vdash$ defined over $S$
are BiHom-associative according to Definition \ref{dia}.
Conversely, if each product of a BiHom-left-symmetric dialgebra
is BiHom-associative, then by Definition \ref{gls}, $S$ is a BiHom-associative dialgebra.
\end{proof}

% Now we introduce averaging operator in order to produce Hom-associative color dialgebras from Hom-associative color algebras.
\begin{defn}
An averaging operator over a BiHom-associative algebra  $(A, \mu, \alpha,\beta)$ is a linear map $\gamma : A\rightarrow A$ such
that $\alpha\circ\gamma=\gamma\circ\alpha$ and $\beta\circ\gamma=\gamma\circ\beta$, and for all $x, y\in A$,
\begin{equation}
 \gamma(\mu(\gamma(x), y)=\mu(\gamma(x), \gamma(y))=\gamma(\mu(x, \gamma(y))). \label{avo1}
\end{equation}
 \end{defn}

\begin{thm}\label{bk2}
 Let $(A,\cdot)$ be an associative algebra and $\alpha,\beta : A\rightarrow A$ two averaging operators such that
$(A, \cdot,\alpha,\beta)$ be a BiHom-associative algebra. For any $x, y\in A$, define new operations on $A$ by
$$x\vdash y=\alpha(x)\cdot \beta(y)\quad\mbox{and}\quad x\dashv y:=\beta(x)\cdot\alpha(y).$$
Then $(A, \dashv, \vdash, \alpha,\beta)$ is a BiHom-associative dialgebra.
\end{thm}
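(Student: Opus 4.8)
The plan is to verify the eight defining identities \eqref{dia0}--\eqref{dia5} of Definition~\ref{dia} one after another. The first three come for free: \eqref{dia0} is \eqref{aca0}, which is part of the hypothesis, and \eqref{dia0.0}, \eqref{dia0.00} follow from the multiplicativity of $\alpha,\beta$ for $\cdot$ (i.e.\ \eqref{aca0.0}, \eqref{aca0.00}) together with $\alpha\beta=\beta\alpha$; for instance $\alpha(x\vdash y)=\alpha(\alpha(x)\cdot\beta(y))=\alpha^{2}(x)\cdot\alpha\beta(y)=\alpha(\alpha(x))\cdot\beta(\alpha(y))=\alpha(x)\vdash\alpha(y)$, and the remaining three equalities are identical in shape. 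It is worth noting that \eqref{dia2} says exactly that $(A,\dashv,\alpha,\beta)$ is a BiHom-associative algebra and \eqref{dia4} that $(A,\vdash,\alpha,\beta)$ is one, so the real content consists of ``each of the two twisted products is BiHom-associative'' together with the mixed compatibilities \eqref{dia1}, \eqref{dia3}, \eqref{dia5}.

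For the five substantive axioms I would argue uniformly. Expand both sides using $x\vdash y=\alpha(x)\cdot\beta(y)$ and $x\dashv y=\beta(x)\cdot\alpha(y)$, push every $\alpha$ and $\beta$ onto $x,y,z$ by multiplicativity, and re-bracket everything into left-normed form $(\sigma_{1}(x)\cdot\sigma_{2}(y))\cdot\sigma_{3}(z)$ by associativity of $\cdot$, where $\sigma_{i}$ is a word in $\alpha,\beta$. The two sides then differ only in these decorating words, and they are reconciled by two families of rewrites. The first is \emph{averaging absorption}: comparing \eqref{avo1} with multiplicativity gives, for $\gamma\in\{\alpha,\beta\}$, the identities $\gamma^{2}(u)\cdot\gamma(v)=\gamma(u)\cdot\gamma(v)$ and $\gamma(u)\cdot\gamma^{2}(v)=\gamma(u)\cdot\gamma(v)$, which lower exponents on factors lying in $\mathrm{Im}\,\alpha$ or $\mathrm{Im}\,\beta$. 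The second is BiHom-associativity \eqref{aca}, $\alpha(a)\cdot(b\cdot c)=(a\cdot b)\cdot\beta(c)$, which, combined with associativity, also produces the auxiliary relation $(\alpha(a)\cdot b)\cdot c=a\cdot(b\cdot\beta(c))$; these let one trade an $\alpha$ on the outermost left slot against a $\beta$ on the outermost right slot. As a sample, \eqref{dia4} is purely formal, both sides collapsing to $(\alpha^{2}(x)\cdot\beta\alpha(y))\cdot\beta^{2}(z)$ using only multiplicativity, $\alpha\beta=\beta\alpha$ and associativity. For \eqref{dia2} the two sides expand to $(\beta\alpha(x)\cdot\beta\alpha(y))\cdot\alpha^{2}(z)$ and $(\beta^{2}(x)\cdot\beta\alpha(y))\cdot\beta\alpha(z)$; absorbing $\beta^{2}(x)\cdot\beta\alpha(y)=\beta(x)\cdot\beta\alpha(y)$, then applying \eqref{aca} to pull the outer $\beta$ off the $z$-slot of the second, and then absorbing $\beta\alpha(y)\cdot\alpha^{2}(z)=\beta\alpha(y)\cdot\alpha(z)$ inside the first, one brings both to $\beta\alpha(x)\cdot(\beta\alpha(y)\cdot\alpha(z))$. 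The mixed axioms \eqref{dia1}, \eqref{dia3}, \eqref{dia5} are to be closed by the same devices.

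The step I expect to be the main obstacle is precisely this final reconciliation in the mixed cases \eqref{dia1}, \eqref{dia3}, \eqref{dia5}: after flattening, each of the three slots of the two left-normed triple products carries several competing $\alpha,\beta$-words, and one must apply the absorption rules and \eqref{aca} (in its plain and auxiliary forms) in exactly the right order and direction for the two expressions to meet. It is also the only place the averaging hypothesis on $\alpha$ and $\beta$ enters --- it is not needed for \eqref{dia4} --- so any line of attack that fails to invoke \eqref{avo1} for both $\alpha$ and $\beta$ in these three identities is bound to stall; everything outside this is routine substitution and re-bracketing.
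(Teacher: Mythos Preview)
Your approach is essentially the same as the paper's: expand the two new products in terms of $\cdot$, then reduce using the averaging identity \eqref{avo1} together with the BiHom-associativity \eqref{aca}. The paper verifies only \eqref{dia2} and declares the rest ``similar''; it applies \eqref{avo1} in its raw form (e.g.\ $\alpha(\beta(y)\cdot\alpha(z))=\alpha\beta(y)\cdot\alpha(z)$ and $\beta(\beta(x)\cdot\alpha(y))=\beta(x)\cdot\alpha\beta(y)$) and then closes with one application of \eqref{aca}, whereas you first distribute $\alpha,\beta$ by multiplicativity and then use your derived ``absorption'' rules $\gamma^{2}(u)\cdot\gamma(v)=\gamma(u)\cdot\gamma(v)$, which is exactly \eqref{avo1} rewritten via multiplicativity. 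So the two computations for \eqref{dia2} are the same argument in different packaging, and your extra observation that \eqref{dia4} needs only associativity, multiplicativity and $\alpha\beta=\beta\alpha$ (no averaging) is a correct refinement the paper does not record.
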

\begin{proof}
 We prove only one axiom, as others are proved similarly. For any $x, y, z\in A$,
\begin{align*}
 \alpha(x)\dashv(y\dashv z)&-(x\dashv y)\dashv\beta(z)
=\alpha\beta(x)\cdot\alpha(\beta(y)\cdot\alpha(z))-\beta(\beta(x)\cdot\alpha(y))\cdot\alpha\beta(z)\nonumber\\
&= \alpha\beta(x)\cdot(\alpha\beta(y)\cdot\alpha(z))-(\beta(x)\cdot\alpha\beta(y))\cdot\alpha\beta(z)
\quad\quad\;(\mbox{by}\;\eqref{avo1})\nonumber\\
&= \alpha\beta(x)\cdot(\alpha\beta(y)\cdot\alpha(z))-\alpha\beta(x)\cdot(\alpha\beta(y)\cdot\alpha(z))=0.
\;\;(\mbox{by}\;\eqref{aca})\nonumber
\end{align*}
This proves the second axiom in Definition \ref{dia}.
\end{proof}
\begin{defn}
Let $(D, \dashv, \vdash, \alpha_{1}, \alpha_{2})$ be a BiHom-associative
dialgebra, and $V$  be a vector space.
Let $l_{\dashv}, r_{\dashv}, l_{\vdash}, r_{\vdash} : D \rightarrow gl(V),$ and $\beta_{1}, \beta_{2}: V \rightarrow V$ be six linear maps. Then, $(l_{\dashv}, r_{\dashv}, l_{\vdash}, r_{\vdash}, \beta_{1}, \beta_{2}, V$) is called a bimodule of $ D $
 if for any $x, y \in D $ and $v\in V$:
$$\begin{array}{lllllllllll}
l_{\dashv}(x\vdash y)\beta_{2}(v)&=&l_{\vdash}(\alpha_{1}(x))l_{\dashv}(y)v,&r_{\dashv}(\alpha_{2}(x))l_{\vdash}(y)v&=&l_{\vdash}(\alpha_{1}(y))r_{\dashv}(x)v,\\r_{\dashv}(\alpha_{2}(x))
r_{\vdash}(y)(v)&=&r_{\vdash}(y\dashv x)\beta_{1}(v),&
l_{\dashv}(x\dashv y)\beta_{2}(v)&=&l_{\dashv}(\alpha_{1}(x))l_{\dashv}(y)v,\\
r_{\dashv}(\alpha_{2}(x))l_{\dashv}(y)v&=&l_{\dashv}(\alpha_{1}(y))r_{\dashv}(x)v,&r_{\dashv}(\alpha_{2}(x))
r_{\dashv}(y)(v)&=&r_{\dashv}(y\dashv x)\beta_{1}(v),\\
l_{\dashv}(x\dashv y)\beta_{2}(v)&=&l_{\dashv}(\alpha_{1}(x))l_{\vdash}(y)v,&r_{\vdash}(\alpha_{2}(x))l_{\vdash}(y)v&=&l_{\vdash}(\alpha_{1}(y))r_{\vdash}(x)v,\\r_{\vdash}(\alpha_{2}(x))
r_{\vdash}(y)(v)&=&r_{\vdash}(y\vdash x)\beta_{1}(v),&
l_{\vdash}(x\vdash y)\beta_{2}(v)&=&l_{\vdash}(\alpha_{1}(x))l_{\vdash}(y)v,\\
r_{\dashv}(\alpha_{2}(x))l_{\vdash}(y)v&=&l_{\vdash}(\alpha_{1}(y))r_{\dashv}(x)v, &r_{\dashv}(\alpha_{2}(x))
r_{\vdash}(y)(v)&=&r_{\vdash}(y\dashv x)\beta_{1}(v),\\
l_{\dashv}(x\dashv y)\beta_{2}(v)&=&l_{\vdash}(\alpha_{1}(x))l_{\dashv}(y)v, &r_{\vdash}(\alpha_{2}(x))l_{\dashv}(y)v&=&l_{\vdash}(\alpha_{1}(y))r_{\vdash}(x)v,\\r_{\vdash}(\alpha_{2}(x))
r_{\dashv}(y)(v)&=&r_{\vdash}(y\vdash x)\beta_{1}(v), &
 \beta_{1}(l_{\vdash}(x)v)&=& l_{\vdash}(\alpha_{1}(x))\beta_{1}(v),\\ \beta_{1}(r_{\vdash}(x)v)&=& r_{\vdash}(\alpha_{1}(x))\beta_{1}(v),&
\beta_{2}(l_{\vdash}(x)v) &=& l_{\vdash}(\alpha_{2}(x))\beta_{2}(v),\cr\beta_{2}(r_{\vdash}(x)v)&=& r_{\vdash}(\alpha_{2}(x))\beta_{2}(v),&
\beta_{1}(l_{\dashv}(x)v)&=& l_{\dashv}(\alpha_{1}(x))\beta_{1}(v),\cr \beta_{1}(r_{\dashv}(x)v)&=& r_{\dashv}(\alpha_{1}(x))\beta_{1}(v),&
\beta_{2}(l_{\dashv}(x)v) &=& l_{\dashv}(\alpha_{2}(x))\beta_{2}(v),\\\beta_{2}(r_{\dashv}(x)v)&=&r_{\dashv}(\alpha_{2}(x))\beta_{2}(v).
\end{array}$$
\end{defn}
\begin{prop}
Let $(l_{\dashv}, r_{\dashv}, l_{\vdash}, r_{\vdash}, \beta_{1}, \beta_{2}, V)$ be a bimodule of a BiHom-associative
dialgebra $(D,\dashv, \vdash, \alpha_{1}, \alpha_{2}).$ Then, there exists a BiHom-associative
dialgebra structure on the direct sum $D\oplus V $ of the underlying vector spaces of $ D $ and $V$ given for all $ x, y \in D, u, v \in V $ by
\begin{eqnarray*}
(x + u) \dashv' (y + v) &:=& x \dashv y + l_{\dashv}(x)v + r_{\dashv}(y)u, \cr
(x + u) \vdash' (y + v) &:=& x \vdash y + l_{\vdash}(x)v + r_{\vdash}(y)u, \cr
(\alpha_{1}+\beta_{1})(x+u)&:=&\alpha_1(x)+\beta_1(u), \cr
(\alpha_{2}+\beta_{2})(x+u)&:=&\alpha_2(x)+\beta_2(u).
\end{eqnarray*}
We denote such a BiHom-associative dialgebra by $D \times_{l_{\dashv},r_{\dashv}, l_{\vdash}, r_{\vdash}, \alpha_{1}, \alpha_{2}, \beta_{1}, \beta_{2}} V$.
\end{prop}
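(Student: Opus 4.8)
The plan is to verify directly that the quintuple $(D\oplus V,\dashv',\vdash',\alpha_1+\beta_1,\alpha_2+\beta_2)$ satisfies all the axioms \eqref{dia0}--\eqref{dia5} of Definition \ref{dia}, reading off each requirement from the defining equations of a bimodule. The key structural observation is that, for generic $x+u,\,y+v,\,z+w\in D\oplus V$, every triple product occurring in \eqref{dia1}--\eqref{dia5} decomposes as a purely $D$-valued term plus exactly three $V$-valued terms: one in which the $V$-entry sits in the first slot (carrying $u$), one in which it sits in the middle slot (carrying $v$), and one in which it sits in the last slot (carrying $w$). Comparing the two sides of each dialgebra axiom therefore splits into four independent identities: the $D$-component, which is precisely that axiom holding in $D$ and so true by hypothesis, together with three $V$-component identities, each of which, after an obvious relabelling of variables, is one of the associativity-type bimodule relations.

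First I would dispose of multiplicativity. The equation $\alpha\circ\beta=\beta\circ\alpha$ on $D\oplus V$ reduces to \eqref{dia0} in $D$ together with the commuting of $\beta_1$ and $\beta_2$ on $V$; and expanding $(\alpha_i+\beta_i)\big((x+u)\diamond'(y+v)\big)$ against $(\alpha_i+\beta_i)(x+u)\diamond'(\alpha_i+\beta_i)(y+v)$ for $\diamond\in\{\dashv,\vdash\}$ yields, besides \eqref{dia0.0}--\eqref{dia0.00} in $D$, exactly the eight relations $\beta_j(l_\diamond(x)v)=l_\diamond(\alpha_j(x))\beta_j(v)$ and $\beta_j(r_\diamond(x)v)=r_\diamond(\alpha_j(x))\beta_j(v)$, $j=1,2$, from the bimodule definition.

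Then I would check the five associativity axioms, carrying out \eqref{dia1} in full as a model. Writing out $\big((x+u)\vdash'(y+v)\big)\dashv'(\alpha_2+\beta_2)(z+w)$ and $(\alpha_1+\beta_1)(x+u)\vdash'\big((y+v)\dashv'(z+w)\big)$, the $D$-parts agree by \eqref{dia1} in $D$, the last-slot terms demand $l_\dashv(x\vdash y)\beta_2(w)=l_\vdash(\alpha_1(x))l_\dashv(y)w$, the middle-slot terms demand $r_\dashv(\alpha_2(z))l_\vdash(x)v=l_\vdash(\alpha_1(x))r_\dashv(z)v$, and the first-slot terms demand $r_\dashv(\alpha_2(z))r_\vdash(y)u=r_\vdash(y\dashv z)\beta_1(u)$ --- all three being bimodule relations. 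Axioms \eqref{dia2}--\eqref{dia5} follow by the same bookkeeping, each matching its three $V$-slots with the corresponding three bimodule relations, exactly as in the analogous statement already proved for BiHom-left-symmetric dialgebras.

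The one genuinely non-mechanical point --- and the only obstacle worth flagging --- is organizational: one must check that the associativity-type bimodule relations really do group into five triples, one triple matching the three $V$-slots of each of \eqref{dia1}--\eqref{dia5}, and in doing so respect the asymmetry between the left twist $\alpha_1$ and the right twist $\alpha_2$, which enter differently according to whether the $V$-entry occupies the first slot or the last. Everything else is routine expansion, and in the write-up I would display only the axiom \eqref{dia1} computation, remarking that the remaining axioms are handled identically.
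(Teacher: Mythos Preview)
Your proposal is correct and follows essentially the same approach as the paper's own proof: expand one of the associativity axioms (the paper also chooses \eqref{dia1}) for generic elements of $D\oplus V$, match the $D$-component with the corresponding dialgebra axiom in $D$, and match the three $V$-components with the appropriate bimodule relations, then remark that the remaining axioms are handled identically. Your write-up is in fact more thorough than the paper's, since you also address the multiplicativity axioms \eqref{dia0}--\eqref{dia0.00} explicitly and articulate the slot-by-slot organizational principle that the paper leaves implicit.
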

\begin{proof}
We prove only one axiom, as others are proved similarly.
For any $x_{1},x_{2},x_{3}\in D$ and $v_1, v_2, v_3\in V$,
\begin{align*}
&((x_1+v_1)\vdash'(x_2+v_2))\dashv'(\alpha_2+\beta_{2})(x_3+v_3)\\
&\quad=(x_1\vdash x_2+l_{\vdash}(x_1)v_2+r_{\vdash}(x_2)v_1)\dashv'(\alpha_2(x_3)+\beta_2(v_3))\\
&\quad=(x_1\vdash x_2)\dashv\alpha_2(x_3)+l_{\dashv}(x_1\vdash x_2)\beta_2(v_3)\\
&\quad \quad +r_\dashv(\alpha_2(x_3))l_\vdash(x_1)v_1+r_{\dashv}(\alpha_{2}(x_3))r_{\vdash}(x_2)v_1.
\\
&(\alpha_1+\beta_1)(x_1+v_1)\vdash'((x_{2}+v_{2})\dashv'(x_3+v_3))\\
&\quad =(\alpha_1(x_1)+\beta_1(v_1))\vdash'(x_2\dashv x_3+l_{\dashv}(x_{2})v_3+r_{\dashv}(x_3)v_2)\\
&\quad =\alpha_1(x_1)\vdash(x_2\dashv x_3)+l_{\vdash}(\alpha_1(x_1))l_{\dashv}(x_2)v_3\\
&\quad \quad +l_{\vdash}(\alpha_1(x_1))r_{\dashv}(x_{3})v_2+r_\vdash(x_2+x_3)\beta_1(v_1),
\end{align*}
which implies that
\begin{align*}
((x_1+v_1)\vdash'(x_2+v_2))\dashv' & (\alpha_2+\beta_{2})(x_3+v_3)= \\
&(\alpha_1+\beta_1)(x_1+v_1)\vdash'((x_{2}+v_{2})\dashv'(x_3+v_3)).
\qedhere
\end{align*}
\end{proof}
\begin{exes}
Some examples can be obtained as follows. \\
1) Let $(D,\dashv, \vdash,\alpha,\beta)$ be a BiHom-associative dialgebra. Then $(L_{\dashv},R_{\dashv},L_{\vdash},R_{\vdash},\alpha,\beta,D)$ is a bimodule of $D$, where
$L_{\dashv}(a)b=a\dashv b,~R_{\dashv}(a)b=b\dashv a,~L_{\vdash}(a)b=a\vdash b$ and $R_{\vdash}(a)b=b\vdash a$ for all $(a,b)\in D^{ 2}$. More generally, if $B$ is a two-sided BiHom-ideal of $(D,\dashv, \vdash,\alpha,\beta)$, then $(L_{\dashv},R_{\dashv},L_{\vdash},R_{\vdash},\alpha,\beta,B)$ is a bimodule of $D$, where the structure maps are $L_{\dashv}(a)x=a\dashv x=x\dashv a=R_{\dashv}(a)x$ and $L_{\vdash}(a)x=a\vdash x=x\vdash a=R_{\vdash}(a)x$ for all $x\in B$ and $(a,b)\in D^{ 2}$. \\
2)
If $(D,\dashv, \vdash)$ is an associative dialgebra and $(l_{\dashv},r_{\dashv},l_{\vdash},r_{\vdash},\alpha,\beta,V)$ is a bimodule of $D$, then $(l_{\dashv},r_{\dashv},l_{\vdash},r_{\vdash},Id_D,Id_D,V)$ is a bimodule of $\mathbb{D}$ where $\mathbb{D}=(D,\dashv, \vdash,Id_{D},
Id_{D})$ is a BiHom-associative dialgebra.
\end{exes}
\begin{prop}
If $f:(D_1,\dashv_1, \vdash_1,\alpha_1,\alpha_2)\longrightarrow(D_2,\dashv_2, \vdash_2,\beta_1,\beta_{2})$ is a morphism of BiHom-associative dialgebra, then $(l_{\dashv},r_{\dashv},l_{\vdash},r_{\vdash},\beta_1,\beta_{2},D_2)$ becomes a bimodule of $D_1$ via $f$, that is, the structure maps are defined as $l_{\dashv_1}(a)b=f(a)\dashv_2 b,~r_{\dashv_1}(a)b=b \dashv_2 f(a),~l_{\vdash_1}(a)b=f(a)\vdash_2 b$ and $r_{\vdash_1}(a)b=b \vdash_2 f(a)$ for all
$(a,b)\in D_1\times D_2$.
\end{prop}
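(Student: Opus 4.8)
The plan is to check directly, one by one, the defining relations of a bimodule of a BiHom-associative dialgebra for the seven-tuple $(l_{\dashv_1},r_{\dashv_1},l_{\vdash_1},r_{\vdash_1},\beta_1,\beta_2,D_2)$, exactly as was done for Proposition~\ref{viaf}. First observe that $(D_2,\beta_1,\beta_2)$ is a BiHom-module, since $\beta_1\beta_2=\beta_2\beta_1$ by \eqref{dia0} applied in $D_2$, so the statement is meaningful. The only inputs needed are that $f$ is a morphism, i.e. $f\circ\alpha_1=\beta_1\circ f$, $f\circ\alpha_2=\beta_2\circ f$, $f(a\dashv_1 b)=f(a)\dashv_2 f(b)$ and $f(a\vdash_1 b)=f(a)\vdash_2 f(b)$, together with the axioms \eqref{dia0.0}--\eqref{dia5} read inside $D_2$ (where $\beta_1,\beta_2$ play the role of the twisting maps).

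The step to be repeated for every axiom is purely mechanical: substitute $l_{\dashv_1}(a)v=f(a)\dashv_2 v$, $r_{\dashv_1}(a)v=v\dashv_2 f(a)$, $l_{\vdash_1}(a)v=f(a)\vdash_2 v$, $r_{\vdash_1}(a)v=v\vdash_2 f(a)$ on both sides; use $f(a\dashv_1 b)=f(a)\dashv_2 f(b)$ and $f(a\vdash_1 b)=f(a)\vdash_2 f(b)$ to pull $f$ inside the arguments, and $f\circ\alpha_i=\beta_i\circ f$ to move $f$ past the twisting maps; then recognise the resulting identity in $D_2$ as one of \eqref{dia1}--\eqref{dia5} (for the twelve relations quadratic in $l_\bullet,r_\bullet$) or as an instance of \eqref{dia0.0}--\eqref{dia0.00} combined with $f\circ\alpha_i=\beta_i\circ f$ (for the eight relations expressing compatibility of the $\beta_i$ with $l_\bullet$ and $r_\bullet$).

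As representative cases: the relation $l_{\dashv}(x\vdash y)\beta_2(v)=l_{\vdash}(\alpha_1(x))l_{\dashv}(y)v$ becomes, after substitution, $(f(x)\vdash_2 f(y))\dashv_2\beta_2(v)=\beta_1(f(x))\vdash_2(f(y)\dashv_2 v)$, which is exactly \eqref{dia1} in $D_2$ with $x\mapsto f(x)$, $y\mapsto f(y)$, $z\mapsto v$; the relation $r_{\dashv}(\alpha_2(x))r_{\vdash}(y)v=r_{\vdash}(y\dashv x)\beta_1(v)$ becomes $(v\vdash_2 f(y))\dashv_2\beta_2(f(x))=\beta_1(v)\vdash_2(f(y)\dashv_2 f(x))$, again \eqref{dia1} with the vector placed in the first slot; and the relation $\beta_1(l_{\vdash}(x)v)=l_{\vdash}(\alpha_1(x))\beta_1(v)$ becomes $\beta_1(f(x)\vdash_2 v)=\beta_1(f(x))\vdash_2\beta_1(v)$, which is \eqref{dia0.0} for $D_2$ together with $f\circ\alpha_1=\beta_1\circ f$. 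Every other bimodule axiom is dispatched in precisely the same manner, each one step long.

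The only genuine effort is bookkeeping: there is no conceptual obstacle, but one must correctly pair each of the roughly twenty bimodule identities with the dialgebra identity of $D_2$ responsible for it, and keep in mind that in $D_2$ it is $\beta_1$ (not $\beta_2$) that occurs on the left of \eqref{dia1}, \eqref{dia2}, \eqref{dia5}, while \eqref{dia3} and \eqref{dia4} mix the two products. Once this dictionary is fixed the calculation is routine, so as in Proposition~\ref{viaf} we would display one or two cases and note that the remaining ones follow similarly.
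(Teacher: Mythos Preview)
Your proposal is correct and follows exactly the approach of the paper: the paper's proof consists of the single line ``It is obtained in a similar way as for Proposition~\ref{viaf}'', and you have carried out precisely that, with more explicit sample verifications than the paper itself provides.
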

\begin{proof}
It is obtained in a similar way as for Proposition \ref{viaf}.
\end{proof}
\begin{thm}
Let $(A, \dashv_{A}, \vdash_{A}, \alpha_{1}, \alpha_{2})$ and $(B, \dashv_{B}, \vdash_{B}, \beta_{1}, \beta_{2})$
 be BiHom-associative dialgebras. Suppose that there are linear maps
$ l_{\dashv_{A}},   r_{\dashv_{A}},  l_{\vdash_{A}},  r_{\vdash_{A}} : A \rightarrow gl(B),$
and $ l_{\dashv_{B}},   r_{\dashv_{B}},  l_{\vdash_{B}},  r_{\vdash_{B}} : B \rightarrow gl(A)$
such that
\begin{align*}
& ( l_{\dashv_{A}},   r_{\dashv_{A}},  l_{\vdash_{A}},
r_{\vdash_{A}}, \beta_{1}, \beta_{2}, B) \ \mbox{is a bimodule of} \ A, \\
& (l_{\dashv_{B}},   r_{\dashv_{B}},  l_{\vdash_{B}},  r_{\vdash_{B}},
\alpha_{1}, \alpha_{2}, A) \ \mbox{is a bimodule} \ of B,
\end{align*}
and for any $ x, y \in A, ~a, b \in B $,
\begin{eqnarray}
 \label{bieq101}
r_{\dashv_{A}}(\alpha_{2}(x))(a \vdash_{B} b) = r_{\vdash_{A}}(l_{\dashv_{B}}(b)x)\beta_{1}(a) +
\beta_{1}(a)\vdash_{B} (r_{\dashv_{A}}(x)b), \\
\label{bieq102}
\begin{array}{ll}
l_{\dashv_{A}}(l_{\vdash_{B}}(a)x)\beta_{2}(b) & + (r_{\vdash_{A}}(x)a) \dashv_{B}\beta_{2}(b)=\cr
& \beta_{1}(a)\vdash_{B} (l_{\dashv_{A}}(x)b) + r_{\vdash_{A}}(r_{\dashv_{B}}(b)x)\beta_{1}(a),
\end{array} \\
\label{bieq103}
l_{\dashv_{A}}(\alpha_{1}(x))(a \dashv_{B} b) = ( l_{\vdash_{A}}(x)a) \dashv_{B}\beta_{2}(b) +
 l_{\dashv_{A}}(r_{\vdash_{B}}(a)x)\beta_{2}(b), \\
 \label{bieq104}
r_{\dashv_{A}}(\alpha_{2}(x))(a \dashv_{B} b) = r_{\dashv_{A}}(l_{\dashv_{B}}(b)x)\beta_{1}(a) +
\beta_{1}(a)\dashv_{B} (r_{\dashv_{A}}(x)b), \\
\label{bieq105}
\begin{array}{ll}
l_{\dashv_{A}}(l_{\dashv_{B}}(a)x)\beta_{2}(b) & + (r_{\dashv_{A}}(x)a) \dashv_{B}\beta_{2}(b)=\cr
& \beta_{1}(a)\dashv_{B} (l_{\dashv_{A}}(x)b) + r_{\dashv_{A}}(r_{\dashv_{B}}(b)x)\beta_{1}(a),
\end{array} \\
\label{bieq106}
l_{\dashv_{A}}(\alpha_{1}(x))(a \dashv_{B} b) = ( l_{\dashv_{A}}(x)a) \dashv_{B}\beta_{2}(b) +
 l_{\dashv_{A}}(r_{\dashv_{B}}(a)x)\beta_{2}(b),
\\
 \label{bieq107}
r_{\dashv_{A}}(\alpha_{2}(x))(a \dashv_{B} b) = r_{\dashv_{A}}(l_{\vdash_{B}}(b)x)\beta_{1}(a) +
\beta_{1}(a)\dashv_{B} (r_{\vdash_{A}}(x)b), \\
\label{bieq108}
\begin{array}{ll}
l_{\dashv_{A}}(l_{\dashv_{B}}(a)x)\beta_{2}(b) & + (r_{\dashv_{A}}(x)a) \dashv_{B}\beta_{2}(b)=\cr
& \beta_{1}(a)\dashv_{B} (l_{\vdash_{A}}(x)b) + r_{\dashv_{A}}(r_{\vdash_{B}}(b)x)\beta_{1}(a),
\end{array} \\
\label{bieq109}
l_{\dashv_{A}}(\alpha_{1}(x))(a \vdash_{B} b) = ( l_{\dashv_{A}}(x)a) \dashv_{B}\beta_{2}(b) +
 l_{\dashv_{A}}(r_{\dashv_{B}}(a)x)\beta_{2}(b),
\\
\label{bieq110}
r_{\vdash_{A}}(\alpha_{2}(x))(a \vdash_{B} b) = r_{\vdash_{A}}(l_{\vdash_{B}}(b)x)\beta_{1}(a) +
\beta_{1}(a)\vdash_{B} (r_{\vdash_{A}}(x)b), \\
\label{bieq111}
\begin{array}{ll}
l_{\vdash_{A}}(l_{\vdash_{B}}(a)x)\beta_{2}(b) & + (r_{\vdash_{A}}(x)a) \vdash_{B}\beta_{2}(b)=\cr
& \beta_{1}(a)\vdash_{B} (l_{\vdash_{A}}(x)b) + r_{\vdash_{A}}(r_{\vdash_{B}}(b)x)\beta_{1}(a),
\end{array} \\
\label{bieq112}
l_{\vdash_{A}}(\alpha_{1}(x))(a \vdash_{B} b) = ( l_{\vdash_{A}}(x)a) \vdash_{B}\beta_{2}(b) +
 l_{\vdash_{A}}(r_{\vdash_{B}}(a)x)\beta_{2}(b),
\\
\label{bieq113}
r_{\vdash_{A}}(\alpha_{2}(x))(a \dashv_{B} b) = r_{\vdash_{A}}(l_{\vdash_{B}}(b)x)\beta_{1}(a) +
\beta_{1}(a)\vdash_{B} (r_{\vdash_{A}}(x)b), \\
\label{bieq114}
\begin{array}{ll}
l_{\vdash_{A}}(l_{\dashv_{B}}(a)x)\beta_{2}(b) & + (r_{\dashv_{A}}(x)a) \vdash_{B}\beta_{2}(b)=\cr
& \beta_{1}(a)\vdash_{B} (l_{\vdash_{A}}(x)b) + r_{\vdash_{A}}(r_{\vdash_{B}}(b)x)\beta_{1}(a),
\end{array} \\
\label{bieq115}
l_{\vdash_{A}}(\alpha_{1}(x))(a \vdash_{B} b) = ( l_{\dashv_{A}}(x)a) \vdash_{B}\beta_{2}(b) +
 l_{\vdash_{A}}(r_{\dashv_{B}}(a)x)\beta_{2}(b),
\\
 \label{bieq116}
r_{\dashv_{B}}(\beta_{2}(a))(x \vdash_{A} y) = r_{\vdash_{B}}(l_{\dashv_{A}}(y)a)\alpha_{1}(x) +
\alpha_{1}(x)\vdash_{A} (r_{\dashv_{B}}(a)y), \\
\label{bieq117}
\begin{array}{ll}
l_{\dashv_{B}}(l_{\vdash_{A}}(x)a)\alpha_{2}(y) & + (r_{\vdash_{B}}(a)x) \dashv_{A}\alpha_{2}(y)=\cr
& \alpha_{1}(x)\vdash_{B} (l_{\dashv_{B}}(a)y) + r_{\vdash_{B}}(r_{\dashv_{A}}(y)a)\alpha_{1}(x),
\end{array} \\
\label{bieq118}
l_{\dashv_{B}}(\beta_{1}(a))(x \dashv_{A} y) = ( l_{\vdash_{B}}(a)x) \dashv_{A}\alpha_{2}(y) +
 l_{\dashv_{B}}(r_{\vdash_{A}}(x)a)\alpha_{2}(y),
\\
 \label{bieq119}
r_{\dashv_{B}}(\beta_{2}(a))(x \dashv_{A} y) = r_{\dashv_{B}}(l_{\dashv_{A}}(y)a)\alpha_{1}(x) +
\alpha_{1}(x)\dashv_{A} (r_{\dashv_{B}}(a)y), \\
\label{bieq120}
\begin{array}{ll}
l_{\dashv_{B}}(l_{\dashv_{A}}(x)a)\alpha_{2}(y) & + (r_{\dashv_{B}}(a)x) \dashv_{A}\alpha_{2}(y)=\cr
& \alpha_{1}(x)\dashv_{B} (l_{\dashv_{B}}(a)y) + r_{\dashv_{B}}(r_{\dashv_{A}}(y)a)\alpha_{1}(x),
\end{array} \\
\label{bieq121}
l_{\dashv_{B}}(\beta_{1}(a))(x \dashv_{A} y) = ( l_{\dashv_{B}}(a)x) \dashv_{A}\alpha_{2}(y) +
 l_{\dashv_{B}}(r_{\dashv_{A}}(x)a)\alpha_{2}(y),
\\
 \label{bieq122}
r_{\dashv_{B}}(\beta_{2}(a))(x \dashv_{A} y) = r_{\dashv_{B}}(l_{\vdash_{A}}(y)a)\alpha_{1}(x) +
\alpha_{1}(x)\dashv_{A} (r_{\vdash_{B}}(a)y), \\
\label{bieq123}
\begin{array}{ll}
l_{\dashv_{B}}(l_{\dashv_{A}}(x)a)\alpha_{2}(y) & + (r_{\dashv_{B}}(a)x) \dashv_{A}\alpha_{2}(y)=\cr
& \alpha_{1}(x)\dashv_{B} (l_{\vdash_{B}}(a)y) + r_{\dashv_{B}}(r_{\vdash_{A}}(y)a)\alpha_{1}(x),
\end{array} \\
\label{bieq124}
l_{\dashv_{B}}(\beta_{1}(a))(x \vdash_{A} y) = ( l_{\dashv_{B}}(a)x) \dashv_{A}\alpha_{2}(y) +
 l_{\dashv_{B}}(r_{\dashv_{A}}(x)a)\alpha_{2}(y),
\\
\label{bieq125}
r_{\vdash_{B}}(\beta_{2}(a))(x \vdash_{A} y) = r_{\vdash_{B}}(l_{\vdash_{A}}(y)a)\alpha_{1}(x) +
\alpha_{1}(x)\vdash_{A} (r_{\vdash_{B}}(a)y), \\
\label{bieq126}
\begin{array}{ll}
l_{\vdash_{B}}(l_{\vdash_{A}}(x)a)\alpha_{2}(y) & + (r_{\vdash_{B}}(a)x) \vdash_{A}\alpha_{2}(y)=\cr
& \alpha_{1}(x)\vdash_{B} (l_{\vdash_{B}}(a)y) + r_{\vdash_{B}}(r_{\vdash_{A}}(y)a)\alpha_{1}(x),
\end{array} \\
\label{bieq127}
l_{\vdash_{B}}(\beta_{1}(a))(x \vdash_{A} y) = ( l_{\vdash_{B}}(a)x) \vdash_{A}\alpha_{2}(y) +
 l_{\vdash_{B}}(r_{\vdash_{A}}(x)a)\alpha_{2}(y),
\\
\label{bieq128}
r_{\vdash_{B}}(\beta_{2}(a))(x \dashv_{A} y) = r_{\vdash_{B}}(l_{\vdash_{A}}(y)a)\alpha_{1}(x) +
\alpha_{1}(x)\vdash_{A} (r_{\vdash_{B}}(a)y), \\
\label{bieq129}
\begin{array}{ll}
l_{\vdash_{B}}(l_{\dashv_{A}}(x)a)\alpha_{2}(y) & + (r_{\dashv_{B}}(a)x) \vdash_{A}\alpha_{2}(y)=\cr
& \alpha_{1}(x)\vdash_{B} (l_{\vdash_{B}}(a)y) + r_{\vdash_{B}}(r_{\vdash_{A}}(y)a)\alpha_{1}(x),
\end{array} \\
\label{bieq130}
l_{\vdash_{B}}(\beta_{1}(a))(x \vdash_{A} y) = ( l_{\dashv_{B}}(a)x) \vdash_{A}\alpha_{2}(y) +
 l_{\vdash_{B}}(r_{\dashv_{A}}(x)a)\alpha_{2}(y).
\end{eqnarray}
Then, there is a BiHom-associative dialgebra structure on the direct sum $ A \oplus B $ of the underlying vector spaces of
 $ A $ and $ B $ given by
\begin{eqnarray*}
(x + a) \dashv ( y + b ) &:=& (x \dashv_{A} y + r_{\dashv_{B}}(b)x + l_{\dashv_{B}}(a)y)\cr
&&\quad +(l_{\dashv_{A}}(x)b + r_{\dashv_{A}}(y)a + a \dashv_{B} b ), \cr
(x + a) \vdash ( y + b ) &:=& (x \vdash_{A} y + r_{\vdash_{B}}(b)x + l_{\vdash_{B}}(a)y)\cr
&&\quad + (l_{\vdash_{A}}(x)b + r_{\vdash_{A}}(y)a + a \vdash_{B} b ),\cr
(\alpha_1+\beta_1)(x+a)&:=&\alpha_1(x)+\beta_1(a),\cr
(\alpha_2+\beta_2)(x+a)&:=&\alpha_2(x)+\beta_2(a).
\end{eqnarray*}
\end{thm}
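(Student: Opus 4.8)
The plan is to verify directly that the two products $\dashv$ and $\vdash$ on $A\oplus B$, together with the twisting maps $\alpha_1\oplus\beta_1$ and $\alpha_2\oplus\beta_2$, satisfy the eight axioms \eqref{dia0}--\eqref{dia5} of Definition \ref{dia}; this follows the same scheme already used for Theorem \ref{matched ass}. Axiom \eqref{dia0} is immediate, since $\alpha_1,\alpha_2$ commute on $A$ and $\beta_1,\beta_2$ commute on $B$ and the two pieces act on complementary summands. The multiplicativity axioms \eqref{dia0.0}--\eqref{dia0.00} split, for each of the two products, into a pure $A$-part (which holds because $\alpha_i$ is a morphism of $(A,\dashv_A,\vdash_A)$), a pure $B$-part (because $\beta_i$ is a morphism of $(B,\dashv_B,\vdash_B)$), and cross terms of the form $\beta_i(l_{\dashv_A}(x)b)$ versus $l_{\dashv_A}(\alpha_i(x))\beta_i(b)$ (and the analogous ones for $r_{\dashv_A},l_{\vdash_A},r_{\vdash_A}$ and for the $B$-actions), which are exactly the equivariance conditions built into the definition of a bimodule of a BiHom-associative dialgebra. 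So the first three axioms cost nothing beyond the hypotheses.

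For each of the remaining five identities \eqref{dia1}--\eqref{dia5}, I would substitute $x+a,\ y+b,\ z+c$ with $x,y,z\in A$ and $a,b,c\in B$, expand both sides using the definitions of $\dashv$ and $\vdash$ on $A\oplus B$, and then sort the resulting terms by where they land. Each side becomes a sum of a term lying purely in $A$, a term lying purely in $B$, and a collection of mixed terms, each mixed term being either an element of $A$ produced by applying $l_{\dashv_B},r_{\dashv_B},l_{\vdash_B},r_{\vdash_B}$ (together with $\dashv_A,\vdash_A$) or an element of $B$ produced by applying $l_{\dashv_A},r_{\dashv_A},l_{\vdash_A},r_{\vdash_A}$ (together with $\dashv_B,\vdash_B$). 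Equality of the pure-$A$ parts is precisely the corresponding instance of \eqref{dia1}--\eqref{dia5} inside $A$, and likewise for the pure-$B$ parts inside $B$; these hold because $A$ and $B$ are themselves BiHom-associative dialgebras. What remains is to match the mixed terms: the $B$-valued mixed terms coming from a given dialgebra axiom are brought into the shape of the relevant subset of \eqref{bieq101}--\eqref{bieq115}, while the $A$-valued ones match the $A\leftrightarrow B$-symmetric partners among \eqref{bieq116}--\eqref{bieq130}. Along the way one uses the bimodule axioms listed before the theorem (e.g. $l_{\dashv_A}(x\vdash_A y)\beta_2(\cdot)=l_{\vdash_A}(\alpha_1(x))l_{\dashv_A}(y)(\cdot)$ and its companions) to rewrite composites so that the two sides are genuinely in the same form as the hypotheses. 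Since by construction the list \eqref{bieq101}--\eqref{bieq130} records exactly these mixed-term cancellations, each axiom then closes term by term on the $A$-summand and on the $B$-summand.

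I expect to write out in full only one representative axiom — say \eqref{dia1} or \eqref{dia3} — exhibiting the pure-$A$ part, the pure-$B$ part, and the mixed-term bookkeeping against the relevant equations among \eqref{bieq101}--\eqref{bieq130}, and to say that \eqref{dia2}, \eqref{dia4}, \eqref{dia5} are handled identically, exactly as the proof of Theorem \ref{matched ass} treated the single associativity axiom \eqref{aca}. The main obstacle is organizational rather than conceptual: each dialgebra axiom, once expanded on $A\oplus B$, produces on the order of a dozen mixed terms on each side, and one must track each term's type — which module map, which underlying product, which arrangement of $\alpha_1,\alpha_2,\beta_1,\beta_2$ — carefully enough to confirm that the thirty compatibility equations account for exactly the needed cancellations, with none left unused and none missing. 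Keeping that bookkeeping honest, and invoking the bimodule equivariance relations at the right moments, is the only delicate point.
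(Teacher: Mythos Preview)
Your proposal is correct and follows essentially the same approach as the paper, which simply states that the proof is obtained in a similar way as for Theorem~\ref{matched ass}. Your plan to expand each dialgebra axiom on $A\oplus B$, separate the pure-$A$, pure-$B$, and mixed terms, and match the latter against the bimodule axioms and the compatibility equations \eqref{bieq101}--\eqref{bieq130} is exactly the intended argument.
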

\begin{proof}
The proof is obtained in a similar way as for Theorem \ref{matched ass}.
\end{proof}
Let $ A \bowtie^{l_{\dashv_{A}}, r_{\dashv_{A}},
l_{\vdash_{A}}, r_{\vdash_{A}}, \beta_{1}, \beta_{1}}_{l_{\dashv_{B}}, r_{\dashv_{B}}, l_{\vdash_{B}},
r_{\vdash_{B}}, \alpha_{1}, \alpha_{2}} B $ denote this BiHom-associative dialgebra.
% Let $ A \bowtie^{l_{\vdash_{A}}, r_{\vdash_{A}},
%l_{\dashv_{A}}, r_{\dashv_{A}}, \beta_{1}, \beta_{1}}_{l_{\vdash_{B}}, r_{\vdash_{B}}, l_{\dashv_{B}},
%r_{\dashv_{B}}, \alpha_{1}, \alpha_{2}} B $ denote this biHom-dendriform algebra.

\section{Bimodules and matched pairs of BiHom- tridendriform algebras}
\label{sec:homtridendriformcoloralgebras}
In this section, we recall definitions of BiHom-dendriform and BiHom tridendriform algebras given in \cite{LiuMakhMenPan:Rota-BaxteropsBiHomassalg}.
Next we study the concept of bimodules and matched pairs of BiHom-tridendriform alge-
bra and we give some related properties.

\begin{defn}
A BiHom-dendriform algebra is a quintuple $(A, \dashv, \vdash, \alpha, \beta)$ consisting of a vector space $A$ on which the operations $\dashv, \vdash: A\otimes A\rightarrow A$ and $\alpha, \beta: A\rightarrow A$ are linear maps satisfying
\begin{eqnarray}
&&\alpha\circ\beta=\beta\circ\alpha,\\
&&\alpha(x\dashv y)=\alpha(x)\dashv\alpha(y), \alpha(x\vdash y)=\alpha(x)\vdash\alpha(y),\\
&&\beta(x\dashv y)=\beta(x)\dashv\beta(y), \beta(x\vdash y)=\beta(x)\vdash\beta(y),\\
&&(x \dashv y)\dashv \beta(z) = \alpha(x)\dashv (y \dashv z + y \vdash z), \\
&&(x\vdash y)\dashv\beta(z)=\alpha(x)\vdash(y \dashv z), \\
&&\alpha(x)\vdash (y \vdash z ) = (x \dashv y + x \vdash y)\vdash\beta(z).
\end{eqnarray}
\end{defn}
\begin{rmk}
BiHom-dendriform algebras are BiHom-X algebras.
\end{rmk}
\begin{prop}
If $(A, \dashv, \vdash, \alpha, \beta)$ is a BiHom-dendriform algebra, then $(A, \ast, \alpha, \beta)$ is a multiplicative BiHom-associative algebra,
where $ x \ast y = x \dashv y + x \vdash y $.
\end{prop}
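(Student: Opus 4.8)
The plan is to verify directly the four defining identities \eqref{aca0}--\eqref{aca} of a (multiplicative) BiHom-associative algebra for the quadruple $(A,\ast,\alpha,\beta)$, drawing on the six axioms of a BiHom-dendriform algebra. The compatibility $\alpha\circ\beta=\beta\circ\alpha$ is inherited verbatim from the dendriform structure. For the multiplicativity conditions, I would simply use linearity of $\alpha$ and $\beta$ together with the fact that each of them is multiplicative for both $\dashv$ and $\vdash$: $\alpha(x\ast y)=\alpha(x\dashv y)+\alpha(x\vdash y)=\alpha(x)\dashv\alpha(y)+\alpha(x)\vdash\alpha(y)=\alpha(x)\ast\alpha(y)$, and symmetrically for $\beta$. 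This disposes of \eqref{aca0}, \eqref{aca0.0}, \eqref{aca0.00} and also gives the ``multiplicative'' part of the conclusion.

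The only identity requiring genuine computation is BiHom-associativity $\alpha(x)\ast(y\ast z)=(x\ast y)\ast\beta(z)$. First I would expand both sides bilinearly into four summands each: the left-hand side becomes $\alpha(x)\dashv(y\dashv z)+\alpha(x)\dashv(y\vdash z)+\alpha(x)\vdash(y\dashv z)+\alpha(x)\vdash(y\vdash z)$, and the right-hand side becomes $(x\dashv y)\dashv\beta(z)+(x\vdash y)\dashv\beta(z)+(x\dashv y)\vdash\beta(z)+(x\vdash y)\vdash\beta(z)$. Then I would rewrite the right-hand side using the three dendriform relations: the axiom $(x\dashv y)\dashv\beta(z)=\alpha(x)\dashv(y\dashv z+y\vdash z)$ converts the first summand, the axiom $(x\vdash y)\dashv\beta(z)=\alpha(x)\vdash(y\dashv z)$ converts the second, and the axiom $\alpha(x)\vdash(y\vdash z)=(x\dashv y+x\vdash y)\vdash\beta(z)$ handles the last two summands at once. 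Collecting the resulting terms yields precisely the four-term expansion of the left-hand side, so the two sides agree.

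Since every step is a one-line substitution, there is essentially no obstacle; the only point to watch is the bookkeeping, namely matching the grouped term $(x\dashv y+x\vdash y)\vdash\beta(z)$ of the third dendriform axiom against the pair of $\vdash$-terms produced on the right-hand side, and making sure all eight terms are accounted for. With BiHom-associativity established and multiplicativity already noted, the proposition follows.
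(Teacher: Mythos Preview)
Your proposal is correct and follows essentially the same approach as the paper: expand $(x\ast y)\ast\beta(z)$ into its four $\dashv/\vdash$ summands and apply the three BiHom-dendriform axioms to rewrite it as the expansion of $\alpha(x)\ast(y\ast z)$, with multiplicativity of $\alpha,\beta$ handled separately by linearity. The paper presents the associativity verification as a single chain of equalities rather than expanding both sides first, but the substance is identical.
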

\begin{proof}
For all $x, y, z\in A$,
\begin{align*}
(x\ast y)\ast\beta(z)&=(x\dashv y)\dashv\beta(z) + (x\dashv y)\vdash\beta(z) + (x\vdash y)\dashv\beta(z) + (x\vdash y)\vdash\beta(z)\cr
&=(x\dashv y)\dashv\beta(z) + (x\vdash y)\dashv\beta(z) + (x\dashv y)\vdash\beta(z) + (x\vdash y)\vdash\beta(z)\cr
&= (x\dashv y)\dashv\beta(z) + (x\vdash y)\dashv\beta(z) + (x\ast y)\vdash\beta(z)\cr
&= \alpha(x)\dashv(y\ast z) + \alpha(x)\vdash(y\dashv z) + \alpha(x)\vdash(y\vdash z)\cr
&= \alpha(x)\dashv(y\ast z) + \alpha(x)\vdash(y\ast z)= \alpha(x)\ast(y\ast z), \cr
\alpha(x\ast y)&=\alpha(x\vdash y) + \alpha(x\dashv y)= \alpha(x)\vdash\alpha(y) + \alpha(x)\dashv\alpha(y)= \alpha(x)\ast\alpha(y),\\
\beta(x\ast y)&=\beta(x\vdash y) + \beta(x\dashv y)= \beta(x)\vdash\beta(y) + \beta(x)\dashv\beta(y)= \beta(x)\ast\beta(y).
\qedhere \end{align*}
\end{proof}

\begin{defn}[\cite{HounkonnouHoundedjiSilvestrov:DoubleconstrbiHomFrobalg}]
Let $(A, \dashv, \vdash, \alpha_{1}, \alpha_{2})$ be a BiHom-dendriform algebra,  and $V$  be a vector space.
Let $l_{\dashv}, r_{\dashv}, l_{\vdash}, r_{\vdash} : A \rightarrow gl(V),$ and $\beta_{1}, \beta_{2}: V \rightarrow V$ be six linear maps. Then, ( $ l_{\dashv}, r_{\dashv}, l_{\vdash}, r_{\vdash}, \beta_{1}, \beta_{2}, V$) is called a bimodule of $ A $ if for any $ x, y \in A, v\in V$ and
 $ x \ast y = x \dashv y + x \vdash y,$ $l_{\ast} = l_{\dashv} + l_{\vdash},$
 $r_{\ast} = r_{\dashv} + r_{\vdash}, $ the following equalities hold:
 $$
\begin{array}{llllllllll}
l_{\dashv}(x \dashv y)\beta_{2}(v)&=& l_{\dashv}(\alpha_{1}(x))l_{\ast}(y)v,& r_{\dashv}(\alpha_{2}(x))l_{\dashv}(y)v&=&l_{\dashv}(\alpha_{1}(y))r_{\ast}(x)v,\cr
r_{\dashv}(\alpha_{2}(y))r_{\dashv}(y)v &=& r_{\dashv}(x\ast y)\beta_{1}(v),& l_{\dashv}(x \vdash y)\beta_{2}(v) &=& l_{\vdash}(\alpha_{1}(x))l_{\dashv}(y)v,\cr
r_{\dashv}(\alpha_{2}(x))l_{\vdash}(y)v &=& l_{\vdash}(\alpha_{1}(y))r_{\dashv}(x)v,& r_{\dashv}(\alpha_{2}(x))r_{\vdash}(y)v &=& r_{\vdash}(y\dashv x)\beta_{1}(v),\cr
l_{\vdash}(x\ast y)\beta_{2}(v) &=& l_{\vdash}(\alpha_{1}(x))l_{\vdash}(y)v,& r_{\vdash}(\alpha_{2}(x))l_{\ast}(y)v&=& l_{\vdash}(\alpha_{1}(y))r_{\vdash}(x)v,\cr
r_{\vdash}(\alpha_{2}(x))r_{\ast}(y)v &=& r_{\vdash}(y \vdash x)\beta_{1}(v),&
\beta_{1}(l_{\vdash}(x)v)&=& l_{\vdash}(\alpha_{1}(x))\beta_{1}(v),\\ \beta_{1}(r_{\vdash}(x)v)&=& r_{\vdash}(\alpha_{1}(x))\beta_{1}(v),&
\beta_{2}(l_{\vdash}(x)v) &=& l_{\vdash}(\alpha_{2}(x))\beta_{2}(v),\cr\beta_{2}(r_{\vdash}(x)v)&=& r_{\vdash}(\alpha_{2}(x))\beta_{2}(v),&
\beta_{1}(l_{\dashv}(x)v)&=& l_{\dashv}(\alpha_{1}(x))\beta_{1}(v),\cr \beta_{1}(r_{\dashv}(x)v)&=& r_{\dashv}(\alpha_{1}(x))\beta_{1}(v),&
\beta_{2}(l_{\dashv}(x)v) &=& l_{\dashv}(\alpha_{2}(x))\beta_{2}(v),\\
\beta_{2}(r_{\dashv}(x)v)&=&r_{\dashv}(\alpha_{2}(x))\beta_{2}(v).
\end{array}$$
\end{defn}
\begin{prop}[\cite{HounkonnouHoundedjiSilvestrov:DoubleconstrbiHomFrobalg}]
Let $(l_{\dashv}, r_{\dashv}, l_{\vdash}, r_{\vdash}, \beta_{1}, \beta_{2}, V)$ be a bimodule of a BiHom-dendri\-form algebra $(A,\dashv, \vdash, \alpha_{1}, \alpha_{2}).$
Then, on the direct sum $A\oplus V $ of the underlying vector spaces of $ A $ and $V$, there exists a BiHom-dendriform algebra structure  given for all $ x, y \in A, u, v \in V $ by
\begin{align*}
(x + u) \dashv' (y + v) &:= x \dashv y + l_{\dashv}(x)v + r_{\dashv}(y)u\cr
(x + u) \vdash' (y + v) &:= x \vdash y + l_{\vdash}(x)v + r_{\vdash}(y)u, \cr
(\alpha_1+\beta_1)(x+a)&:=\alpha_1(x)+\beta_1(a), \cr
(\alpha_2+\beta_2)(x+a)&:=\alpha_2(x)+\beta_2(a).
\end{align*}
We denote it by $ A \times_{l_{\dashv},r_{\dashv}, l_{\vdash}, r_{\vdash}, \alpha_{1}, \alpha_{2}, \beta_{1}, \beta_{2}} V$.
\end{prop}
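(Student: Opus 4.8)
The plan is to verify directly that the quintuple $(A\oplus V,\dashv',\vdash',\alpha_1+\beta_1,\alpha_2+\beta_2)$ satisfies the six defining identities of a BiHom-dendriform algebra. Write a generic element of $A\oplus V$ as $x+u$ with $x\in A$, $u\in V$. Because the products $\dashv',\vdash'$ are set up so that the $A$-part of a product lies in $A$ and depends only on the $A$-parts of the inputs, while the $V$-part lies in $V$ and is expressed through the actions $l_\bullet,r_\bullet$, each identity splits into an $A$-component and a $V$-component that may be checked separately. It is also convenient to record that the associated sum product satisfies $(x+u)\ast'(y+v)=x\ast y+l_\ast(x)v+r_\ast(y)u$, where $\ast'=\dashv'+\vdash'$, $l_\ast=l_\dashv+l_\vdash$, $r_\ast=r_\dashv+r_\vdash$ and $x\ast y=x\dashv y+x\vdash y$, since $\ast'$ is exactly the product of the associated BiHom-associative algebra.

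First I would dispose of the three structural conditions. The relation $(\alpha_1+\beta_1)\circ(\alpha_2+\beta_2)=(\alpha_2+\beta_2)\circ(\alpha_1+\beta_1)$ is immediate from $\alpha_1\alpha_2=\alpha_2\alpha_1$ (a BiHom-dendriform axiom for $A$) together with $\beta_1\beta_2=\beta_2\beta_1$. Multiplicativity, say $(\alpha_1+\beta_1)\big((x+u)\dashv'(y+v)\big)=\big((\alpha_1+\beta_1)(x+u)\big)\dashv'\big((\alpha_1+\beta_1)(y+v)\big)$, reduces on the $A$-summand to $\alpha_1(x\dashv y)=\alpha_1(x)\dashv\alpha_1(y)$, and on the $V$-summand to the equivariance relations $\beta_1(l_\dashv(x)v)=l_\dashv(\alpha_1(x))\beta_1(v)$ and $\beta_1(r_\dashv(y)u)=r_\dashv(\alpha_1(y))\beta_1(u)$; the cases of $\vdash'$ and of $\alpha_2+\beta_2$ are identical, using the remaining six equivariance relations of the bimodule definition.

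Next I would treat the three genuine dendriform identities. For each one I substitute generic elements $x+u,\ y+v,\ z+w$, expand both sides using the formulas for $\dashv',\vdash',\alpha_i+\beta_i$, and collect terms. The $A$-components coincide precisely because $(A,\dashv,\vdash,\alpha_1,\alpha_2)$ is a BiHom-dendriform algebra. For the $V$-components one matches each summand to a single bimodule relation; for instance, verifying $(x\dashv y)\dashv\beta(z)=\alpha(x)\dashv(y\dashv z+y\vdash z)$ reduces, after cancellation of the $A$-parts, to the three relations $l_\dashv(x\dashv y)\beta_2(\cdot)=l_\dashv(\alpha_1(x))l_\ast(y)(\cdot)$, $\ r_\dashv(\alpha_2(z))l_\dashv(x)(\cdot)=l_\dashv(\alpha_1(x))r_\ast(z)(\cdot)$, and $\ r_\dashv(\alpha_2(z))r_\dashv(y)(\cdot)=r_\dashv(y\ast z)\beta_1(\cdot)$, applied to $w$, $v$, and $u$ respectively. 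The identities $(x\vdash y)\dashv\beta(z)=\alpha(x)\vdash(y\dashv z)$ and $\alpha(x)\vdash(y\vdash z)=(x\dashv y+x\vdash y)\vdash\beta(z)$ are handled in the same way, invoking the corresponding relations involving $l_\vdash,r_\vdash$ and $l_\ast,r_\ast$.

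The only real work is this last bookkeeping, namely pairing each of the (many) summands in the $V$-component with the bimodule relation that absorbs it; there is no conceptual obstacle, since the construction is exactly the split null (semidirect sum) extension attached to a bimodule, and the defining relations of a BiHom-dendriform bimodule are precisely what is needed for these identities to close up. In the written proof a single representative identity therefore suffices, with the remaining ones noted to follow by the same pattern.
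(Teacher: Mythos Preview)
Your approach is correct. Note, however, that the paper does not supply its own proof of this proposition: it is quoted from the cited reference \cite{HounkonnouHoundedjiSilvestrov:DoubleconstrbiHomFrobalg} and stated without argument. That said, your method matches exactly the pattern the paper uses for the analogous semidirect-product propositions it \emph{does} prove (for BiHom-left-symmetric dialgebras, BiHom-associative dialgebras, and BiHom-tridendriform algebras): expand one representative dendriform identity on $A\oplus V$, observe that the $A$-component holds by the BiHom-dendriform axioms of $A$ and the $V$-component splits into terms absorbed by the bimodule relations, and declare the remaining axioms similar. Your write-up is, if anything, more explicit than the paper's analogous proofs in that you also record the commutation and multiplicativity checks.
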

\begin{thm}[\cite{HounkonnouHoundedjiSilvestrov:DoubleconstrbiHomFrobalg}]
Let $(A,\dashv_A,\vdash_A,\alpha_1,\alpha_2)$ and $(B,\dashv_B,\vdash_{B},\beta_1,\beta_2)$ be two BiHom-dendriform algebras. Suppose that there are linear maps $l_{\dashv_A},r_{\dashv_A},l_{\vdash_A},r_{\vdash_A}:A\rightarrow gl(B)$
and $l_{\dashv_B},r_{\dashv_B},l_{\vdash_B},r_{\vdash_B}:B\rightarrow gl(A)$ such that for all $x,y\in A,~a,b\in B$ and for
\begin{align*}x\ast_A y=x\dashv_A y + x \vdash_A y,~~~l_{A} = l_{\dashv_A} +
l_{\vdash_A},~~~r_{A} = r_{\dashv_A} + r_{\vdash_A},\\
a\ast_B b= a\dashv_B b + a \vdash_B b,~~~l_{_B} = l_{\dashv_B} +
l_{\vdash_B},~~~r_{B} = r_{\dashv_B} + r_{\vdash_B},
\end{align*}
the following equalities hold:
\begin{align}
\label{bieq35Adend}
&r_{\dashv_{A}}(\alpha_{2}(x))(a \dashv_{B} b) = \beta_{1}(a)\dashv_{B}( r_{A}(x)b) + r_{\dashv_{A}}(l_{B}(x)\beta_{1}(a)),
\\
\label{bieq36Adend}
&\begin{array}{lll}
l_{\dashv_{A}}(l_{\dashv_{B}}(x))\beta_{2}(b) &+& (r_{\dashv_{A}}(x)a) \dashv_{B}\beta_{2}(b)= \cr
&&\beta_{1}(a) \dashv_{B} (l_{\dashv_{A}}(x)b) + r_{\dashv_{A}}(r_{\dashv_{B}}(b)x)\beta_{1}(a),
\end{array} \\
\label{bieq37Adend}
&l_{\dashv_{A}}(\alpha_{1}(x))(a \ast_{B} b) = (l_{\dashv_{A}}(x)a) \ast_{B} \beta_{2}(b) +
 l_{\dashv_{A}}(r_{\dashv_{A}}(a)x)\beta_{2}(b),
\\
\label{bieq38Adend}
&r_{\dashv_{A}}(\alpha_{2}(x))(a \vdash_{B} b) = r_{\vdash_{A}}(l_{\dashv_{B}}(b)x)\beta_{1}(a) +
\beta_{1}(a)\vdash_{B} (r_{\dashv_{A}}(x)b), \\
\label{bieq39Adend}
&\begin{array}{lll}
l_{\dashv_{A}}(l_{\vdash_{B}}(a)x)\beta_{2}(b) &+& (r_{\vdash_{A}}(x)a) \dashv_{B}\beta_{2}(b)=\cr
&& \beta_{1}(a)\vdash_{B} (l_{\dashv_{A}}(x)b) + r_{\vdash_{A}}(r_{\dashv_{B}}(b)x)\beta_{1}(a),
\end{array} \\
\label{bieq40Adend}
&l_{\vdash_{A}}(\alpha_{1}(x))(a \dashv_{B} b) = ( l_{\vdash_{A}}(x)a) \dashv_{B}\beta_{2}(b) +
 l_{\dashv_{A}}(r_{\vdash_{B}}(a)x)\beta_{2}(b),
\\
\label{bieq41Adend}
&r_{\vdash_{A}}(\alpha_{2}(x))(a \ast_{B} b)= \beta_{1}(a)\vdash_{B} (r_{\vdash_{A}}(x)b) +
 r_{\vdash_{A}}(l_{\vdash_{B}}(b)x)\beta_{1}(a),
\\
\label{bieq42Adend}
&\begin{array}{lll}
\beta_{1}(a)\vdash_{B} (l_{\vdash_{A}}(x)b) &+& r_{\vdash_{A}}(r_{\vdash_{B}}(b)x)\beta_{1}(a)=\cr
&&l_{\vdash_{A}}(l_{B}(a)x)\beta_{2}(b) + (r_{A}(x)a) \vdash_{B}\beta_{2}(b),
\end{array} \\
\label{bieq43Adend}
&l_{\vdash_{A}}(\alpha_{1}(x))(a \vdash_{B} b) = (l_{A}(x)a) \vdash_{B}\beta_{2}(b) + l_{\vdash_{A}}(r_{B}(a)x)\beta_{2}(b),
\\
\label{bieq44Adend}
&r_{\dashv_{B}}(\beta_{2}(a))(x \dashv_{A} y) = \alpha_{1}(x)\dashv_{A} (r_{B}(a)y) + r_{\dashv_{B}}(l_{A}(y)a)\alpha_{1}(x),
\\
\label{bieq45Adend}
&\begin{array}{lll}
l_{\dashv_{B}}(l_{\dashv_{A}}(x)a)\alpha_{2}(y) &+& (r_{\dashv_{B}}(a)x) \dashv_{A}\alpha_{2}(y)=\cr
&&\alpha_{1}(x)\dashv_{A} (l_{B}(a)y) + r_{\dashv_{B}}(r_{A}(y)a)\alpha_{1}(x),
\end{array} \\
\label{bieq46Adend}
&l_{\dashv_{B}}(\beta_{1}(a))(x \ast_{A} y) = (l_{\dashv_{B}}(a)x) \dashv_{A}\alpha_{2}(y) +
l_{\dashv_{B}}(r_{\dashv_{A}}(x)a)\alpha_{2}(y),
\\
\label{bieq47Adend}
&r_{\dashv_{B}}(\beta_{2}(a))(x \vdash_{A} y) = r_{\vdash_{B}}(l_{\dashv_{B}}(y)a)\alpha_{1}(x) +
 \alpha_{1}(x)\vdash_{A} (r_{\dashv_{B}}(a)y),
\\
\label{bieq48Adend}
&\begin{array}{lll}
l_{\dashv_{B}}(l_{\vdash_{A}}(x)a)\alpha_{2}(y) &+& (r_{\vdash_{B}}(a)x) \dashv_{A}\alpha_{2}(y)=\cr
&&\alpha_{1}(x)\vdash_{A} (l_{\dashv_{B}}(a)y) + r_{\vdash_{B}}(r_{\dashv_{A}}(y)a)\alpha_{1}(x),
\end{array} \\
\label{bieq49Adend}
&l_{\vdash_{B}}(\beta_{1}(a))(x \dashv_{A} y) = (l_{\vdash_{B}}(a)x) \dashv_{A}\alpha_{2}(y) +
l_{\dashv_{B}}(r_{\vdash_{A}}(x)a)\alpha_{2}(y),
\\
\label{bieq50Adend}
&r_{\vdash_{B}}(\beta_{2}(a))(x \ast_{A} y)= \alpha_{1}(x)\vdash_{A} (r_{\vdash_{B}}(a)y) +
r_{\vdash_{B}}(l_{\vdash_{A}}(y)a)\alpha_{1}(x),
\\
\label{bieq51Adend}
&\begin{array}{lll}
\alpha_{1}(x)\vdash_{A} (l_{\vdash_{B}}(a)y) &+& r_{\vdash_{B}}(r_{\vdash_{A}}(y)a)\alpha_{1}(x)=\cr
&& l_{\vdash_{B}}(l_{A}(x)a)\alpha_{2}(y) + (r_{B}(a)x) \vdash_{A}\alpha_{2}(y),
\end{array} \\
\label{bieq52Adend}
&l_{\vdash_{B}}(\beta_{1}(a))(x \vdash_{A} y) = (l_{B}(a)x) \vdash_{A}\alpha_{2}(y) +
l_{\vdash_{B}}(r_{A}(x)a)\alpha_{2}(y).
\end{align}
Then $(A,B,l_{\dashv_A},r_{\dashv_A},l_{\vdash_A},r_{\vdash_A},\beta_1,\beta_2,l_{\dashv_B},r_{\dashv_B},l_{\vdash_B},r_{\vdash_B},\alpha_1,\alpha_2)$ is called a matched pair of BiHom-dendriform algebras. In this case, on the direct sum
$A\oplus B$ of the underlying vector spaces of $A$ and $B$, there exists a BiHom-dendriform algebra structure given by
\begin{align*}
(x + a) \dashv(y + b)&:=x \dashv_A y + (l_{\dashv_A}(x)b + r_{\dashv_A}(y)a)+a \dashv_B b + (l_{\dashv_B}(a)y + r_{\dashv_B}(b)x),\cr
(x + a) \vdash (y + b)&:=x \vdash_A y + (l_{\vdash_A}(x)b + r_{\vdash_A}(y)a)+a \vdash_B b + (l_{\vdash_B}(a)y + r_{\vdash_B}(b)x),\cr
(\alpha_{1}\oplus\beta_{1})(x + a)&:=\alpha_{1}(x) + \beta_{1}(a),\cr
(\alpha_{2}\oplus\beta_{2})(x + a)&:=\alpha_{2}(x) + \beta_{2}(a).
\end{align*}
\end{thm}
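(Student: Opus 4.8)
The plan is to verify the three defining identities of a BiHom-dendriform algebra for the products $\dashv$ and $\vdash$ on $A\oplus B$, proceeding exactly as in the proof of Theorem~\ref{matched ass}. First I would dispose of the structural part: $(\alpha_1\oplus\beta_1)(\alpha_2\oplus\beta_2)=(\alpha_2\oplus\beta_2)(\alpha_1\oplus\beta_1)$ holds componentwise since $\alpha_1\alpha_2=\alpha_2\alpha_1$ on $A$ and $\beta_1\beta_2=\beta_2\beta_1$ on $B$, and the multiplicativity of $\alpha_1\oplus\beta_1$ and $\alpha_2\oplus\beta_2$ with respect to $\dashv$ and $\vdash$ reduces to the multiplicativity of the $\alpha_i$ on $A$ and the $\beta_i$ on $B$, together with the equivariance relations such as $\beta_i(l_{\dashv_A}(x)v)=l_{\dashv_A}(\alpha_i(x))\beta_i(v)$ and their analogues for $r_{\dashv_A},l_{\vdash_A},r_{\vdash_A}$ (and likewise on the $B$-side), all of which are part of the two bimodule hypotheses.

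For the main part, fix $x,y,z\in A$ and $a,b,c\in B$ and expand both sides of each of
\begin{align*}
&((x+a)\dashv(y+b))\dashv(\alpha_2\oplus\beta_2)(z+c)\\
&\qquad=(\alpha_1\oplus\beta_1)(x+a)\dashv\bigl((y+b)\dashv(z+c)+(y+b)\vdash(z+c)\bigr),\\
&((x+a)\vdash(y+b))\dashv(\alpha_2\oplus\beta_2)(z+c)=(\alpha_1\oplus\beta_1)(x+a)\vdash\bigl((y+b)\dashv(z+c)\bigr),\\
&(\alpha_1\oplus\beta_1)(x+a)\vdash\bigl((y+b)\vdash(z+c)\bigr)\\
&\qquad=\bigl((x+a)\dashv(y+b)+(x+a)\vdash(y+b)\bigr)\vdash(\alpha_2\oplus\beta_2)(z+c),
\end{align*}
using the definitions of $\dashv$, $\vdash$, $\alpha_1\oplus\beta_1$ and $\alpha_2\oplus\beta_2$ and substituting $\ast_A=\dashv_A+\vdash_A$, $l_A=l_{\dashv_A}+l_{\vdash_A}$, $r_A=r_{\dashv_A}+r_{\vdash_A}$ and their $B$-analogues. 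Each side then becomes a sum of roughly a dozen to twenty terms, each lying in $A$ or in $B$. I would organize the cancellation by sorting the terms according to how many of the three arguments are drawn from $B$: the terms involving only $x,y,z$ cancel by the corresponding BiHom-dendriform identity in $A$; the terms involving only $a,b,c$ cancel by the corresponding identity in $B$; and the remaining mixed terms cancel in pairs, the $B$-valued ones via the bimodule axioms for $(l_{\dashv_A},r_{\dashv_A},l_{\vdash_A},r_{\vdash_A},\beta_1,\beta_2,B)$ together with the identities \eqref{bieq35Adend}--\eqref{bieq43Adend} (those whose left-hand side contains a product in $B$), and the $A$-valued ones via the bimodule axioms for $(l_{\dashv_B},r_{\dashv_B},l_{\vdash_B},r_{\vdash_B},\alpha_1,\alpha_2,A)$ together with \eqref{bieq44Adend}--\eqref{bieq52Adend}.

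The sole difficulty is the bookkeeping: one must keep precise track of which of the three slots is hit by $\alpha_i$ rather than $\beta_i$, in which of the two algebras each product is formed, and which of $l_\dashv$, $l_\vdash$, $r_\dashv$, $r_\vdash$, $l_\ast$, $r_\ast$ appears in each term. There is no conceptual obstacle beyond this — the long list \eqref{bieq35Adend}--\eqref{bieq52Adend} is exactly what is required for the leftover mixed terms to cancel, and the whole computation is the term-by-term analogue of the one already carried out for Theorem~\ref{matched ass} and for the BiHom-left-symmetric dialgebra matched pair. In the writeup I would display the full expansion of one representative identity, say the first one, and note that the remaining two are treated in the same way.
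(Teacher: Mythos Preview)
Your proposal is correct and follows precisely the pattern the paper uses for all its matched-pair results: the paper does not reproduce a proof for this particular theorem (it is quoted from \cite{HounkonnouHoundedjiSilvestrov:DoubleconstrbiHomFrobalg}), but your method is exactly the expansion-and-sort argument carried out in detail for Theorem~\ref{matched ass} and invoked by reference for the remaining matched-pair theorems. Your organizational device of grouping terms by the number of $B$-arguments is the natural way to see which hypothesis handles which cancellation, and matches what the paper does implicitly.
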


We denote this BiHom-dendriform algebra by $A\bowtie^{l_A,r_A,\beta_1,\beta_2}_{l_B,r_B,\alpha_1,\alpha_2}B$.
\begin{defn}\label{LMMP}
 A  BiHom-tridendriform algebra is a sextuple $(T, \dashv, \vdash, \cdot,\alpha,\beta)$ consisting of a linear space $T$,
 three bilinear maps  $\dashv, \vdash, \cdot : T\otimes T\rightarrow T$, and two linear maps $\alpha,\beta : T\rightarrow T$ satisfying for any $x, y, z\in T$,
\begin{eqnarray}
\alpha\circ\beta &=& \beta\circ\alpha,\label{t0}\\
\alpha(x\dashv y) &=& \alpha(x)\dashv\alpha(y), \alpha(x\vdash y)=\alpha(x)\vdash\alpha(y),\label{t0.0}\\
\beta(x\dashv y) &=& \beta(x)\dashv\beta(y), \beta(x\vdash y)=\beta(x)\vdash\beta(y),\label{t0.00}\\
\beta(x\cdot y)&=& \beta(x)\cdot\beta(y), \beta(x\cdot y)=\beta(x)\cdot\beta(y),\label{t0.000}\\
(x\dashv y)\dashv\beta(z)&=& \alpha(x)\dashv(y\dashv z+y\vdash z+y\cdot z),\label{t1}\\
(x\vdash y)\dashv\beta(z)&=& \alpha(x)\vdash(y\dashv z),\label{1.2}\\
\alpha(x)\vdash(y\vdash z)&=& (x\dashv y+x\vdash y+x\cdot y)\vdash\beta(z),\\
(x\dashv y)\cdot\beta(z)&=& \alpha(x)\cdot(y\vdash z),\\
(x\vdash y)\cdot\beta(z)&=& \alpha(x)\vdash(y\cdot z),\\
(x\cdot y)\dashv\beta(z)&=&\alpha(x)\cdot(y\dashv z),\label{1.6}\\
(x\cdot y)\cdot\beta(z)&=&\alpha(x)\cdot(y\cdot z).
\end{eqnarray}
 \end{defn}
\begin{rmk}
BiHom-tridendriform algebras are a BiHom-X algebras.
Also, when the BiHom-associative product ''$\cdot$'' is identically null, we get a BiHom-dendriform algebra.
\end{rmk}
\begin{prop}
Let $(T, \dashv, \vdash, \cdot,\alpha,\beta)$ be a  BiHom-tridendriform algebra. \\
Then $(T, \dashv, \vdash',\alpha,\beta)$ is a BiHom-dendriform algebra, where for any $x, y\in T$,
$$x\vdash' y:=x\vdash y+x\cdot y.$$
\end{prop}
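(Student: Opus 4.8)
The plan is to verify, one at a time, the defining identities of a BiHom-dendriform algebra for the quintuple $(T,\dashv,\vdash',\alpha,\beta)$, using the fact that $\vdash' = \vdash + \cdot$ is linear, so that each relation to be proved splits, by bilinearity, into relations already available among $\dashv$, $\vdash$ and $\cdot$ in the given BiHom-tridendriform structure.

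First I would dispose of the structural identities. The commutation $\alpha\circ\beta = \beta\circ\alpha$ is \eqref{t0}; multiplicativity of $\alpha$ and $\beta$ on $\dashv$ is \eqref{t0.0} and \eqref{t0.00}; and for $\vdash'$ one computes, using \eqref{t0.0} and \eqref{t0.000},
\[
\alpha(x\vdash' y) = \alpha(x\vdash y) + \alpha(x\cdot y) = \alpha(x)\vdash\alpha(y) + \alpha(x)\cdot\alpha(y) = \alpha(x)\vdash'\alpha(y),
\]
and symmetrically for $\beta$ from \eqref{t0.00} and \eqref{t0.000}. Among the three genuine BiHom-dendriform axioms, the relation $(x\dashv y)\dashv\beta(z) = \alpha(x)\dashv(y\dashv z + y\vdash' z)$ is exactly \eqref{t1} rewritten through $y\dashv z + y\vdash' z = y\dashv z + y\vdash z + y\cdot z$. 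For the relation $(x\vdash' y)\dashv\beta(z) = \alpha(x)\vdash'(y\dashv z)$ I would expand the left-hand side as $(x\vdash y)\dashv\beta(z) + (x\cdot y)\dashv\beta(z)$ and apply \eqref{1.2} and \eqref{1.6} to obtain $\alpha(x)\vdash(y\dashv z) + \alpha(x)\cdot(y\dashv z) = \alpha(x)\vdash'(y\dashv z)$.

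The last axiom, $\alpha(x)\vdash'(y\vdash' z) = (x\dashv y + x\vdash' y)\vdash'\beta(z)$, carries most of the bookkeeping and is the main (though still routine) obstacle. I would expand its left-hand side by bilinearity into the four summands $\alpha(x)\vdash(y\vdash z)$, $\alpha(x)\vdash(y\cdot z)$, $\alpha(x)\cdot(y\vdash z)$ and $\alpha(x)\cdot(y\cdot z)$, and rewrite each in turn: the first by the tridendriform identity $\alpha(x)\vdash(y\vdash z) = (x\dashv y + x\vdash y + x\cdot y)\vdash\beta(z)$, the second by $(x\vdash y)\cdot\beta(z) = \alpha(x)\vdash(y\cdot z)$, the third by $(x\dashv y)\cdot\beta(z) = \alpha(x)\cdot(y\vdash z)$, and the fourth by the BiHom-associativity $(x\cdot y)\cdot\beta(z) = \alpha(x)\cdot(y\cdot z)$. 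Summing these and regrouping the last three by bilinearity of $\cdot$, the left-hand side becomes
\[
(x\dashv y + x\vdash y + x\cdot y)\vdash\beta(z) + (x\dashv y + x\vdash y + x\cdot y)\cdot\beta(z),
\]
which equals $(x\dashv y + x\vdash' y)\vdash'\beta(z)$ after substituting $x\vdash' y = x\vdash y + x\cdot y$ and $\vdash' = \vdash + \cdot$. This verifies all the defining identities and completes the proof.
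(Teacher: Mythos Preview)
Your proof is correct and follows exactly the same approach as the paper's: direct verification of the BiHom-dendriform axioms by expanding $\vdash'=\vdash+\cdot$ and invoking the tridendriform identities. The paper only spells out the middle axiom $(x\vdash' y)\dashv\beta(z)=\alpha(x)\vdash'(y\dashv z)$ via \eqref{1.2} and \eqref{1.6} and declares the rest similar, whereas you carry out all three axioms and the multiplicativity checks in full.
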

\begin{proof}
We prove only one axiom, as others are proved similarly. For any $x, y, z\in T$,
\begin{align*}
(x\vdash'y)\dashv\beta(z)&=(x\vdash y+x\cdot y)\dashv\beta(z)\\
&=\alpha(x)\vdash(y\dashv z)+\alpha(x)\cdot(y\dashv z) \quad \mbox{(by~\eqref{1.2}~and~\eqref{1.6})}\\
&=\alpha(x)\vdash'(y\dashv z).
\qedhere \end{align*}
\end{proof}
%%%%%%%%%%%%%%%%%%%%%%%%%%%%%%%%%%%%%%
% \begin{defn}
% An even linear map $R : A\rightarrow A$ on a Hom-tridendriform color algebra $(A, \dashv, \vdash, \cdot, \varepsilon)$ is said to be a
% Rota-Baxter operator of weight $\lambda\in\mathbb{K}$ if $R\circ \alpha=\alpha\circ R$ and
% \begin{eqnarray}
%  R(x)\dashv R(y)&=&R(R(x)\dashv y+ x\dashv R(y)+\lambda x\dashv y),\\
%  R(x)\vdash R(y)&=&R(R(x)\vdash y+x\vdash R(y)+\lambda x\vdash y),\\
%  R(x)\cdot R(y)&=&R(R(x)\cdot y+x\cdot R(y)+\lambda x\cdot y),\label{initl}
% \end{eqnarray}
% for any $x, y\in\mathcal{H}(D)$. \\
% In this case, the quintuple $(A, \dashv, \vdash, \cdot, R)$ is called {\it Rota-Baxter Hom-tridendriform color  algebra of weight $\lambda$}.
% \end{defn}
\begin{thm}\label{car1}
Let $(A, \cdot,\alpha, \beta,R)$ be some Rota-Baxter BiHom-associative algebra of weight $\lambda$,
and three new operations $\dashv, \vdash$ and $\ast$ on $A$ are defined by
$$x\dashv y:=x\cdot R(y), \quad x\vdash y:=R(x)\cdot y, \quad x\ast y:=\lambda x\cdot y.$$
Then $(A, \dashv, \vdash, \ast, \alpha,\beta)$ is a BiHom-tridendriform algebra.
\end{thm}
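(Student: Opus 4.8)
The plan is to check, one after another, all the axioms of Definition \ref{LMMP} for the sextuple $(A,\dashv,\vdash,\ast,\alpha,\beta)$, the only ingredients being the BiHom-associativity relation \eqref{aca} for $\cdot$, the Rota-Baxter identity \eqref{rb} (with $\mu=\cdot$ of weight $\lambda$), and the commutations $\alpha\circ R=R\circ\alpha$, $\beta\circ R=R\circ\beta$ together with the multiplicativity \eqref{aca0.0}--\eqref{aca0.00} of $\alpha$ and $\beta$ with respect to $\cdot$. In reading the axioms of Definition \ref{LMMP} one substitutes the third tridendriform operation by $\ast$, while $\cdot$ keeps denoting the given BiHom-associative product; this is the only possible point of confusion in the notation.

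First I would dispose of the structural axioms \eqref{t0}--\eqref{t0.000}. Here $\alpha\circ\beta=\beta\circ\alpha$ is inherited from $(A,\cdot,\alpha,\beta)$, and for each of $\dashv,\vdash,\ast$ the multiplicativity of $\alpha$ and $\beta$ is a one-liner; for instance $\alpha(x\dashv y)=\alpha\big(x\cdot R(y)\big)=\alpha(x)\cdot\alpha(R(y))=\alpha(x)\cdot R(\alpha(y))=\alpha(x)\dashv\alpha(y)$, and similarly for $\vdash$, while for $\ast$ it is just $\lambda$ times the multiplicativity of $\cdot$.

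The core of the argument is the list of seven compatibility identities of Definition \ref{LMMP} (those from \eqref{t1} onward, including \eqref{1.2} and \eqref{1.6}). The key computational fact is that the Rota-Baxter relation collapses $R$ on the full three-term sum: since $y\dashv z+y\vdash z+y\ast z=y\cdot R(z)+R(y)\cdot z+\lambda\,y\cdot z$, identity \eqref{rb} gives $R\big(y\dashv z+y\vdash z+y\ast z\big)=R(y)\cdot R(z)$. Using this, \eqref{t1} becomes a single instance of \eqref{aca}: the left side is $(x\dashv y)\dashv\beta(z)=\big(x\cdot R(y)\big)\cdot\beta(R(z))$, the right side is $\alpha(x)\cdot R\big(y\dashv z+y\vdash z+y\ast z\big)=\alpha(x)\cdot\big(R(y)\cdot R(z)\big)$, and the two agree by \eqref{aca} with arguments $x,R(y),R(z)$. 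The mirror identity $\alpha(x)\vdash(y\vdash z)=(x\dashv y+x\vdash y+x\ast y)\vdash\beta(z)$ is handled the same way, using \eqref{rb} on the sum inside $R$ and then \eqref{aca}. Each of the remaining five identities (those of the shapes \eqref{1.2}, \eqref{1.6}, the two with $\ast$ on the outside, and $(x\ast y)\ast\beta(z)=\alpha(x)\ast(y\ast z)$) does not even use \eqref{rb}: after expanding $\dashv,\vdash,\ast$ in terms of $\cdot$ and $R$ and commuting $R$ past $\alpha$ or $\beta$ once, each reduces to exactly one application of \eqref{aca}, up to a scalar factor $\lambda$ or $\lambda^{2}$ produced by the occurrences of $\ast$. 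I would write out \eqref{t1} and, say, $(x\vdash y)\dashv\beta(z)=\alpha(x)\vdash(y\dashv z)$ in full and note that the remaining identities are proved analogously.

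I do not expect a genuine obstacle. No identity needs more than one use of \eqref{aca} after the Rota-Baxter collapse, and the hypotheses already supply the commutations with $\alpha$ and $\beta$. The only real care is bookkeeping: tracking the powers of $\lambda$ in the identities involving $\ast$, and applying \eqref{rb} in the correct direction (reading it as ``$R$ of the three-term sum equals $R(x)\cdot R(y)$''). Of the seven identities, \eqref{t1} and its mirror are the informative ones, so I would present those first.
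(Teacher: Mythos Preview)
Your proposal is correct and follows the same direct-verification approach as the paper. The paper's proof is in fact much terser than yours: it verifies only the single axiom \eqref{1.2} (the identity $(x\vdash y)\dashv\beta(z)=\alpha(x)\vdash(y\dashv z)$) and leaves everything else to ``others are proved similarly'', whereas you correctly single out \eqref{t1} and its mirror as the two identities that genuinely use the Rota-Baxter collapse $R(y\dashv z+y\vdash z+y\ast z)=R(y)\cdot R(z)$, with the remaining five reducing to bare BiHom-associativity.
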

\begin{proof}
 We prove only one axiom, as others are proved similarly. For any $x, y, z\in A$,
\begin{align*}
(x\vdash y)\dashv\beta(z)&=(R(x)\cdot y)\dashv\beta(z)\\
&=(R(x)\cdot y)\cdot R(\beta(z))=\alpha(R(x))\cdot (y\cdot R(z))\\
&=\alpha(R(x))\cdot (y\dashv z)=\alpha(x)\vdash (y\dashv z)=\alpha(x)\vdash (y\dashv z).
\qedhere \end{align*}
\end{proof}
% \begin{proof}
%  The proof follows from a simple calculation. For instance, for axiom \eqref{initfh}, one has :
% \begin{eqnarray}
%  (x\dashv y)\vdash-\varepsilon(z, x)x\vdash (y\dashv z)
% &=&\varepsilon(x, y)(R(x)\cdot y)\cdot R(z)-\varepsilon(z, x)\varepsilon(x, y+z) R(x)\cdot(y\cdot R(z))\nonumber\\
% &=&\varepsilon(x, y)\Big((R(x)\cdot y)\cdot R(z)- R(x)\cdot(y\cdot R(z)) \Big).\nonumber
% \end{eqnarray}
% The last line vanishes by associativity.
% \end{proof}
% The below lemma asserts that Hom-tridendriform color algebras may give rise to Hom-associative color algebras.
% \begin{lemma}\cite{B2}\label{car22}
% Let $(A, \dashv, \vdash, \cdot, \varepsilon, \alpha)$ be a  Hom-tridendriform color algebra. Then $(A, \ast, \varepsilon, \alpha)$ is
% an associative color algebra, where
%  $x\ast y=x\vdash y+\varepsilon(x, y)x\dashv y+ x\cdot y$.
% \end{lemma}

%%%%%%%%%%%%%%%%%%%%%%%%%%%%%%%%%%%%%%%%%%%%%%%%%%%%%%%%%%%%%%%%%%%%%%%%%%%%%%%%%%%%%%%%%

In Theorem \ref{tp}, we associate a BiHom-associative algebra to any BiHom-tridendri\-form algebra.
\begin{thm}\label{tp}
If $(T, \dashv, \vdash, \cdot,\alpha,\beta)$ is a BiHom-tridendriform algebra, and $$x\ast y=x\vdash y+x\dashv y+ x\cdot y,$$ then $(T, \ast,\alpha,\beta)$ is a BiHom-associative algebra.
\end{thm}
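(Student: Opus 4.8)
The plan is to check the three defining conditions of a BiHom-associative algebra, namely \eqref{aca0}, \eqref{aca0.0}, \eqref{aca0.00} and \eqref{aca}, for the quadruple $(T,\ast,\alpha,\beta)$. Condition \eqref{aca0} is immediate: it coincides with \eqref{t0}. Conditions \eqref{aca0.0} and \eqref{aca0.00} follow by linearity of $\ast$ together with the multiplicativity axioms \eqref{t0.0}, \eqref{t0.00}, \eqref{t0.000}: for instance $\alpha(x\ast y)=\alpha(x\dashv y)+\alpha(x\vdash y)+\alpha(x\cdot y)=\alpha(x)\dashv\alpha(y)+\alpha(x)\vdash\alpha(y)+\alpha(x)\cdot\alpha(y)=\alpha(x)\ast\alpha(y)$, and the same computation with $\beta$ in place of $\alpha$ gives \eqref{aca0.00}.

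The substantive part is the BiHom-associativity identity \eqref{aca}, that is, $\alpha(x)\ast(y\ast z)=(x\ast y)\ast\beta(z)$ for all $x,y,z\in T$. First I would expand the left-hand side: writing $y\ast z=y\dashv z+y\vdash z+y\cdot z$ and then applying the definition of $\ast$ once more produces a sum of nine terms, one for each choice of a pair of operations $\circ_1,\circ_2\in\{\dashv,\vdash,\cdot\}$, all of the shape $\alpha(x)\,\circ_1\,(y\,\circ_2\,z)$. In the same way the right-hand side expands into the nine terms $(x\,\circ_1\,y)\,\circ_2\,\beta(z)$.

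Next I would rewrite each of the nine left-hand summands by the structural axioms of Definition \ref{LMMP}. Grouping the three summands with outer operation $\dashv$, namely $\alpha(x)\dashv(y\dashv z)$, $\alpha(x)\dashv(y\vdash z)$ and $\alpha(x)\dashv(y\cdot z)$, and using \eqref{t1} turns their sum into $(x\dashv y)\dashv\beta(z)$. The summand $\alpha(x)\vdash(y\dashv z)$ equals $(x\vdash y)\dashv\beta(z)$ by \eqref{1.2}; $\alpha(x)\vdash(y\vdash z)$ equals $(x\dashv y+x\vdash y+x\cdot y)\vdash\beta(z)$; $\alpha(x)\vdash(y\cdot z)$ equals $(x\vdash y)\cdot\beta(z)$; $\alpha(x)\cdot(y\dashv z)$ equals $(x\cdot y)\dashv\beta(z)$ by \eqref{1.6}; $\alpha(x)\cdot(y\vdash z)$ equals $(x\dashv y)\cdot\beta(z)$; and $\alpha(x)\cdot(y\cdot z)$ equals $(x\cdot y)\cdot\beta(z)$. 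Adding these identities term by term, the left-hand side collapses precisely to the sum of all nine terms $(x\,\circ_1\,y)\,\circ_2\,\beta(z)$ appearing in the expansion of the right-hand side, which establishes \eqref{aca}.

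This argument is entirely computational; the only point requiring attention is the bookkeeping of the eighteen summands and their pairing via the correct structural axiom. I therefore expect no genuine obstacle: the proof can be presented compactly by displaying the full expansion of both sides in a single aligned computation and regrouping, exactly in the style of the proofs of the preceding propositions of this section.
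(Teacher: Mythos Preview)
Your proposal is correct and follows essentially the same approach as the paper: expand $(x\ast y)\ast\beta(z)-\alpha(x)\ast(y\ast z)$ into eighteen terms and cancel them in groups using the seven tridendriform axioms of Definition~\ref{LMMP}. Your write-up is in fact more explicit than the paper's, since you identify precisely which axiom handles each pairing and you also verify the multiplicativity conditions \eqref{aca0.0}--\eqref{aca0.00}, whereas the paper only displays the eighteen-term expansion and asserts that it vanishes by the axioms.
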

\begin{proof}
For any $x, y, z\in T$,
 \begin{multline*}
(x\ast y)\ast \beta(z)-\alpha(x)\ast(y\ast z)=
(x\vdash y)\vdash\beta(z)+(x\dashv y)\vdash\beta(z)\\
+(x\cdot y)\vdash\beta(z)
+(x\vdash y)\dashv\beta(z)
+(x\dashv y)\dashv\beta(z)
+(x\cdot y)\dashv\beta(z)\\
+(x\vdash y)\cdot\beta(z)
+(x\dashv y)\cdot\beta(z)
+(x\cdot y)\cdot\beta(z)\\
-\alpha(x)\vdash(y\vdash z)-\alpha(x)\vdash(y\dashv z)-\alpha(x)\vdash(y\cdot z) \\
-\alpha(x)\dashv(y\vdash z)
-\alpha(x)\dashv(y\dashv z)
-\alpha(x)\dashv(y\cdot z) \\
-\alpha(x)\cdot(y\vdash z)-\alpha(x)\cdot(y\dashv z)
-\alpha(x)\cdot(y\cdot z).
 \end{multline*}
The left hand side vanishes by axioms in Definition \ref{LMMP}.
This proves that $(T, \ast, \alpha,\beta)$ is a BiHom-associative algebra.
\end{proof}

Now, we introduce the notion of bimodule of BiHom-tridendriform algebra.
\begin{defn}
Let $(T, \dashv, \vdash,\cdot, \alpha_{1}, \alpha_{2})$ be a BiHom-tridendriform algebra,  and $V$  be a vector space.
Let $l_{\dashv}, r_{\dashv}, l_{\vdash}, r_{\vdash},l_{\cdot}, r_{\cdot} : T \rightarrow gl(V),$ and $\beta_{1}, \beta_{2}: V \rightarrow V$ be eight linear maps. Then, ( $ l_{\dashv}, r_{\dashv}, l_{\vdash}, r_{\vdash},l_{\cdot}, r_{\cdot}, \beta_{1}, \beta_{2}, V$) is called a bimodule of $T $
if the following equations hold for any $ x, y \in T $ and $v\in V$:
$$\begin{array}{llllllll}
l_{\dashv}(x \dashv y)\beta_{2}(v)&=& l_{\dashv}(\alpha_{1}(x))l_{\ast}(y)v,&& r_{\dashv}(\alpha_{2}(x))l_{\dashv}(y)v&=&l_{\dashv}(\alpha_{1}(y))r_{\ast}(x)v,\\
r_{\dashv}(\alpha_{2}(y))r_{\dashv}(y)v &=& r_{\dashv}(x\ast y)\beta_{1}(v),&& l_{\dashv}(x \vdash y)\beta_{2}(v) &=& l_{\vdash}(\alpha_{1}(x))l_{\dashv}(y)v,\\
r_{\dashv}(\alpha_{2}(x))l_{\vdash}(y)v &=& l_{\vdash}(\alpha_{1}(y))r_{\dashv}(x)v,&& r_{\dashv}(\alpha_{2}(x))r_{\vdash}(y)v &=& r_{\vdash}(y\dashv x)\beta_{1}(v),\\
l_{\vdash}(x\ast y)\beta_{2}(v) &=& l_{\vdash}(\alpha_{1}(x))l_{\vdash}(y)v,&& r_{\vdash}(\alpha_{2}(x))l_{\ast}(y)v&=& l_{\vdash}(\alpha_{1}(y))r_{\vdash}(x)v,\\
r_{\vdash}(\alpha_{2}(x))r_{\ast}(y)v &=& r_{\vdash}(y \vdash x)\beta_{1}(v),&& l_{\cdot}(x\dashv y)\beta_{2}(v)&=&l_{\cdot}(\alpha_{1}(x))l_{\vdash}(y)v,\\
r_{\cdot}(\alpha_{2}(x))l_{\dashv}(y)v&=&l_{\cdot}(\alpha_{1}(y))r_{\vdash}(x)v,&& r_{\cdot}(\alpha_{2}(x))r_{\dashv}(y)v&=&r_{\cdot}(y\vdash x)\beta_{1}(v),\\
 l_{\cdot}(x\vdash y)\beta_{2}(v)&=&l_{\vdash}(\alpha_{1}(x))l_{\cdot}(y)v,&& r_{\cdot}(\alpha_{2}(x))l_{\vdash}(y)v&=&l_{\vdash}(\alpha_{1}(y))r_{\cdot}(x)v,\\
 r_{\cdot}(\alpha_{2}(x))r_{\vdash}(y)v&=&r_{\vdash}(y\cdot x)\beta_{1}(v),&& l_{\dashv}(x\cdot y)\beta_{2}(v)&=&l_{\cdot}(\alpha_{1}(x))l_{\dashv}(y)v, \\ r_{\dashv}(\alpha_{2}(x))l_{\cdot}(y)v&=&l_{\cdot}(\alpha_{1}(y))r_{\dashv}(x)v,&& r_{\dashv}(\alpha_{2}(x))r_{\cdot}(y)v&=&r_{\cdot}(y\dashv x)\beta_{1}(v),\\
  l_{\cdot}(x\cdot y)\beta_{2}(v)&=&l_{\cdot}(\alpha_{1}(x))l_{\cdot}(y)v,&& r_{\cdot}(\alpha_{2}(x))l_{\cdot}(y)v&=&l_{\cdot}(\alpha_{1}(y))r_{\cdot}(x)v, \\ r_{\cdot}(\alpha_{2}(x))r_{\cdot}(y)v&=&r_{\cdot}(y\cdot x)\beta_{1}(v),&&
\beta_{1}(l_{\vdash}(x)v)&=& l_{\vdash}(\alpha_{1}(x))\beta_{1}(v),\\ \beta_{1}(r_{\vdash}(x)v)&=& r_{\vdash}(\alpha_{1}(x))\beta_{1}(v),&&
\beta_{2}(l_{\vdash}(x)v) &=& l_{\vdash}(\alpha_{2}(x))\beta_{2}(v),\cr\beta_{2}(r_{\vdash}(x)v)&=& r_{\vdash}(\alpha_{2}(x))\beta_{2}(v),&&
\beta_{1}(l_{\dashv}(x)v)&=& l_{\dashv}(\alpha_{1}(x))\beta_{1}(v),\cr \beta_{1}(r_{\dashv}(x)v)&=& r_{\dashv}(\alpha_{1}(x))\beta_{1}(v),&&
\beta_{2}(l_{\dashv}(x)v) &=& l_{\dashv}(\alpha_{2}(x))\beta_{2}(v),\\\beta_{2}(r_{\dashv}(x)v)&=&r_{\dashv}(\alpha_{2}(x))\beta_{2}(v),
\end{array}$$
where $ x \ast y = x \dashv y + x \vdash y+x\cdot y, l_{\ast} = l_{\dashv} + l_{\vdash}+l_{\cdot}, r_{\ast} = r_{\dashv} + r_{\vdash}+r_{\cdot} $.
\end{defn}
\begin{prop}
Let $(l_{\dashv}, r_{\dashv}, l_{\vdash}, r_{\vdash},l_{\cdot}, r_{\cdot}, \beta_{1}, \beta_{2}, V)$ be a bimodule of a BiHom-tri\-dendriform algebra $(T,\dashv, \vdash,\cdot, \alpha_{1}, \alpha_{2}).$ Then, on the direct sum $T\oplus V $ of the underlying vector spaces of $T$ and $V$, there exists a BiHom-tridendriform algebra structure given, for all $ x, y \in T, u, v \in V $, by
\begin{eqnarray*}
(x + u) \dashv' (y + v) &:=& x \dashv y + l_{\dashv}(x)v + r_{\dashv}(y)u, \cr
(x + u) \vdash' (y + v) &:=& x \vdash y + l_{\vdash}(x)v + r_{\vdash}(y)u,\cr
(x + u) \cdot (y + v) &:=& x \cdot y + l_{\cdot}(x)v + r_{\cdot}(y)u.
\end{eqnarray*}
We denote it by $ T\times_{l_{\dashv},r_{\dashv}, l_{\vdash}, r_{\vdash}, l_{\cdot}, r_{\cdot},\alpha_{1}, \alpha_{2}, \beta_{1}, \beta_{2}} V$.
\end{prop}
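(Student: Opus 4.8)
The plan is to verify directly that the sextuple $(T\oplus V,\ \dashv',\ \vdash',\ \cdot,\ \alpha_{1}\oplus\beta_{1},\ \alpha_{2}\oplus\beta_{2})$ satisfies every defining identity of a BiHom-tridendriform algebra from Definition \ref{LMMP}, where the two twisting maps act componentwise, $(\alpha_{i}\oplus\beta_{i})(x+u):=\alpha_{i}(x)+\beta_{i}(u)$, exactly as in the BiHom-dendriform case treated above. The key structural observation is that each of the three new operations is built from the operations of $T$ on the first component and from the structure maps $l_{\bullet},r_{\bullet}$ coupling the two components, so every expression occurring in a tridendriform axiom for $T\oplus V$ decomposes canonically into a ``$T$-part'' living in $T$ and a ``$V$-part'' living in $V$; one then checks the two parts separately.

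First I would dispose of the multiplicativity and commutativity conditions \eqref{t0}--\eqref{t0.000}. The commutativity $(\alpha_{1}\oplus\beta_{1})(\alpha_{2}\oplus\beta_{2})=(\alpha_{2}\oplus\beta_{2})(\alpha_{1}\oplus\beta_{1})$ follows at once from $\alpha_{1}\alpha_{2}=\alpha_{2}\alpha_{1}$ in $T$ and from $\beta_{1}\beta_{2}=\beta_{2}\beta_{1}$, which is part of the BiHom-module axiom for $V$. Multiplicativity of $\alpha_{i}\oplus\beta_{i}$ with respect to $\dashv'$ reduces, on the $T$-part, to \eqref{t0.0} or \eqref{t0.00} in $T$, and on the $V$-part to the equivariance relations $\beta_{i}(l_{\dashv}(x)v)=l_{\dashv}(\alpha_{i}(x))\beta_{i}(v)$ and $\beta_{i}(r_{\dashv}(x)v)=r_{\dashv}(\alpha_{i}(x))\beta_{i}(v)$ listed in the bimodule definition; the same argument applies verbatim to $\vdash'$ and to $\cdot$.

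Then comes the main bulk: the remaining eleven ``entanglement'' axioms \eqref{t1} and those following it. For each such axiom I would substitute $x+u,\ y+v,\ z+w$, expand both sides using the definitions of $\dashv',\vdash',\cdot$ together with the abbreviations $x\ast y=x\dashv y+x\vdash y+x\cdot y$, $l_{\ast}=l_{\dashv}+l_{\vdash}+l_{\cdot}$, $r_{\ast}=r_{\dashv}+r_{\vdash}+r_{\cdot}$, and then compare components. The $T$-component of the two sides agrees because $T$ is itself a BiHom-tridendriform algebra. The $V$-component of each side splits into a group of terms carrying a left action on $w$, a group carrying a right action on $u$, and (for the mixed relations) some crossed terms; matching these against the two sides, one finds that the difference of the $V$-components is precisely a linear combination of the bimodule identities of Definition's list for BiHom-tridendriform bimodules (for instance, the $l_{\dashv}$-on-$w$ contribution to \eqref{t1} is the identity $l_{\dashv}(x\dashv y)\beta_{2}(w)=l_{\dashv}(\alpha_{1}(x))l_{\ast}(y)w$, and so on), hence vanishes. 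As a representative computation I would display the check of one axiom in full --- \eqref{t1} is a convenient choice --- and remark that the other ten are handled in the same way, in the style already used above for the BiHom-dendriform and BiHom-associative-dialgebra propositions.

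The step I expect to be the main obstacle is organizational rather than conceptual: the bimodule definition for BiHom-tridendriform algebras contains on the order of twenty scalar identities, and one must pair each of the eleven tridendriform relations with the correct subset of them while keeping careful track of which of $\alpha_{1},\alpha_{2}$ (respectively $\beta_{1},\beta_{2}$) decorates each argument slot and whether the coupling term is an $\ast$-action or a single-operation action. The genuine content of the proof is simply the confirmation that the bimodule axioms were written down precisely so as to force every residual $V$-term to cancel; once the representative case is verified, the rest is a routine, if lengthy, bookkeeping exercise.
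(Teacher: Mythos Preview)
Your proposal is correct and follows essentially the same approach as the paper: direct verification of the BiHom-tridendriform axioms on $T\oplus V$, with one representative axiom worked out in full and the rest declared analogous. The paper happens to display axiom \eqref{1.2} rather than \eqref{t1} as its sample computation and omits explicit mention of the multiplicativity checks \eqref{t0}--\eqref{t0.000}, but otherwise your outline matches the paper's proof exactly.
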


\begin{proof}
We prove only one axiom, as others are proved similarly.
For any $x_{1},x_{2},x_{3}\in T$ and $v_1, v_2, v_3\in V$,
\begin{align*}
&((x_1+v_1)\vdash'(x_2+v_2))\dashv'(\alpha_2+\beta_{2})(x_3+v_3)\\
&\quad =(x_1\vdash x_2+l_{\vdash}(x_1)v_2+r_{\vdash}(x_2)v_1)\dashv'(\alpha_2(x_3)+\beta_2(v_3))\\
&\quad =(x_1\vdash x_2)\dashv\alpha_2(x_3)+l_{\dashv}(x_1\vdash x_2)\beta_2(v_3)\\
&\quad\quad+r_\dashv(\alpha_2(x_3))l_\vdash(x_1)v_1+r_{\dashv}(\alpha_{2}(x_3))r_{\vdash}(x_2)v_1.
\\
&(\alpha_1+\beta_1)(x_1+v_1)\vdash'((x_{2}+v_{2})\dashv'(x_3+v_3))\\
&\quad =(\alpha_1(x_1)+\beta_1(v_1))\vdash'(x_2\dashv x_3+l_{\dashv}(x_{2})v_3+r_{\dashv}(x_3)v_2)\\
&\quad =\alpha_1(x_1)\vdash(x_2\dashv x_3)+l_{\vdash}(\alpha_1(x_1))l_{\dashv}(x_2)v_3\\
&\quad \quad+l_{\vdash}(\alpha_1(x_1))r_{\dashv}(x_{3})v_2+r_\vdash(x_2+x_3)\beta_1(v_1).
\end{align*}
We deduce that $((x_1+v_1)\vdash'(x_2+v_2))\dashv'(\alpha_2+\beta_{2})(x_3+v_3)=((x_1+v_1)\vdash'(x_2+v_2))\dashv'(\alpha_2+\beta_{2})(x_3+v_3)$. This ends the proof.
\end{proof}

\begin{exes}
Some examples of bimodules of BiHom-tridendriform algebra can be constructed as follows.
\\
1)
Let $(T,\dashv, \vdash,\cdot,\alpha,\beta)$ be a BiHom-tridendriform algebra. Then $$(L_{\dashv},R_{\dashv},L_{\vdash},R_{\vdash},L_\cdot,R_\cdot,\alpha,\beta,T)
\ \mbox{is a bimodule of}\ T,$$
where for all $(a,b)\in T^{\times 2}$,
\begin{alignat*}{4}
L_{\dashv}(a)b&=a\dashv b, & \quad R_{\dashv}(a)b&=b\dashv a, \\
L_{\vdash}(a)b&=a\vdash b, & \quad R_{\vdash}(a)b&=b\vdash a, \\
L_{\cdot}(a)b&=a\cdot b, & \quad R_{\cdot}(a)b&=b\cdot a.
\end{alignat*}
More generally, if $B$ is a two-sided BiHom-ideal of $(T,\dashv, \vdash,\cdot,\alpha,\beta)$, then \\ $$(L_{\dashv},R_{\dashv},L_{\vdash},R_{\vdash},L_\cdot,R_\cdot,\alpha,\beta,B)
\ \mbox{is a bimodule of}\  T,$$
where for all $x\in B$ and $(a,b)\in T^{\times 2}$,
\begin{align*}
L_{\dashv}(a)x &= a\dashv x=x\dashv a=R_{\dashv}(a)x, \\
L_{\vdash}(a)x &= a\vdash x=x\vdash a=R_{\vdash}(a)x, \\
L_{\cdot}(a)x &= a\cdot x=x\cdot a=R_{\cdot}(a)x.
\end{align*}
2) If $(l_{\dashv},r_{\dashv},l_{\vdash},r_{\vdash},l_\cdot,r_\cdot,V)$ is a bimodule of a BiHom-tridendriform algebra $(T,\dashv, \vdash,\cdot)$ is a BiHom-tridendriform algebra, then $(l_{\dashv},r_{\dashv},l_{\vdash},r_{\vdash},l_\cdot,r_\cdot,Id_{V},Id_{V},V)$ is a bimodule of $\mathbb{T}$, where $\mathbb{T}=(T,\dashv, \vdash,\cdot,Id_{T},
Id_{T})$ is a BiHom-tridendriform algebra.
\end{exes}
\begin{prop}
If $f:(T,\dashv_1, \vdash_1,\cdot_1,\alpha_{1},\alpha_2)\longrightarrow(T',\dashv_2, \vdash_2,\cdot_2,\beta_{1},\beta_{2})$ is a morphism of BiHom-tridendriform algebras, then $(l_{\dashv_1},r_{\dashv_1},l_{\vdash_1},r_{\vdash_1},l_{\cdot_1},r_{\cdot_1},\beta_1,\beta_{2},T')$ becomes a bimodule of $T$ via $f$, that is, for all
$(a,b)\in T\times T'$,
\begin{alignat*}{4}
l_{\dashv_1}(a)b &=f(a)\dashv_2 b, &\quad r_{\dashv_1}(a)b &=b \dashv_2 f(a),\\
l_{\vdash_1}(a)b &=f(a)\vdash_2 b, &\quad r_{\vdash_1}(a)b &=b \vdash_2 f(a),\\
l_{\cdot_1}(a)b  &=f(a)\cdot_2 b,  &\quad r_{\cdot_1}(a)b  &=b \cdot_2 f(a).
\end{alignat*}
\end{prop}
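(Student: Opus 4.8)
The plan is to imitate the proof of Proposition~\ref{viaf} and verify, one at a time, each of the defining equations of a bimodule of the BiHom-tridendriform algebra $(T,\dashv_1,\vdash_1,\cdot_1,\alpha_1,\alpha_2)$, now with $V=T'$, with $\beta_1,\beta_2$ the twisting maps of $T'$, and with the six actions $l_{\diamond_1}(a)b=f(a)\diamond_2 b$, $r_{\diamond_1}(a)b=b\diamond_2 f(a)$ for $\diamond\in\{\dashv,\vdash,\cdot\}$. First I would record the properties of $f$ that drive the argument: being a morphism of BiHom-tridendriform algebras, $f$ satisfies $f\circ\alpha_1=\beta_1\circ f$, $f\circ\alpha_2=\beta_2\circ f$, and $f(x\diamond_1 y)=f(x)\diamond_2 f(y)$ for each of the three products. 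Writing $x\ast_2 y=x\dashv_2 y+x\vdash_2 y+x\cdot_2 y$ for the associated product on $T'$ and recalling $l_{\ast}=l_{\dashv_1}+l_{\vdash_1}+l_{\cdot_1}$, $r_{\ast}=r_{\dashv_1}+r_{\vdash_1}+r_{\cdot_1}$ as in the definition, one immediately gets $l_{\ast}(a)b=f(a)\ast_2 b$ and $r_{\ast}(a)b=b\ast_2 f(a)$, so all occurrences of $l_{\ast},r_{\ast}$ in the bimodule axioms become one-sided $\ast_2$-multiplications by $f(-)$.

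Next I would organize the verification into the expected two families. The twisting-compatibility identities (the last block of the bimodule definition), such as $\beta_1(l_{\dashv_1}(x)v)=l_{\dashv_1}(\alpha_1(x))\beta_1(v)$, become after substitution $\beta_1\big(f(x)\dashv_2 v\big)=\beta_1(f(x))\dashv_2\beta_1(v)$, which is exactly one of the multiplicativity axioms \eqref{t0.0}--\eqref{t0.000} of Definition~\ref{LMMP} read in $T'$, combined with $f\circ\alpha_i=\beta_i\circ f$; the analogous statements for $r$, for $\vdash$, and for $\cdot$ are identical. Every remaining (mixed) identity matches, by the same mechanical substitution, one of the axioms \eqref{t1}, \eqref{1.2}, \eqref{1.6}, or the unlabelled axioms of Definition~\ref{LMMP}: a left action turns into left $\diamond_2$-multiplication by $f(-)$, a right action into right $\diamond_2$-multiplication by $f(-)$, an inner product $\diamond_1$ into $\diamond_2$ of the images, and each $\alpha_i$ into $\beta_i$; the element $v\in T'$ simply plays the role of the third variable $z$ in the corresponding tridendriform axiom of $T'$.

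Following the paper's convention I would display only one representative computation, for instance
\begin{align*}
l_{\dashv_1}(x\vdash_1 y)\beta_2(v)
&= f(x\vdash_1 y)\dashv_2\beta_2(v)
= \big(f(x)\vdash_2 f(y)\big)\dashv_2\beta_2(v) \\
&= \beta_1(f(x))\vdash_2\big(f(y)\dashv_2 v\big) \qquad \textrm{(by \eqref{1.2} applied in } T'\textrm{)} \\
&= f(\alpha_1(x))\vdash_2\big(f(y)\dashv_2 v\big)
= l_{\vdash_1}(\alpha_1(x))\,l_{\dashv_1}(y)v,
\end{align*}
and note that all other axioms are obtained in precisely the same way. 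I do not anticipate any genuine obstacle: each bimodule identity is a one-slot linearization of a single tridendriform axiom of $T'$, obtained by placing an element of $T'$ in one argument and reading multiplications involving $f(-)$ as the actions $l_{\diamond_1},r_{\diamond_1}$, so the only real work is the (entirely routine) bookkeeping of pairing the roughly thirty bimodule equations with the eleven axioms of Definition~\ref{LMMP} and the intertwining relations $f\circ\alpha_i=\beta_i\circ f$.
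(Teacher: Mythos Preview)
Your proposal is correct and follows essentially the same approach as the paper: both substitute the definitions of the six actions, use the morphism identities $f\circ\alpha_i=\beta_i\circ f$ and $f(x\diamond_1 y)=f(x)\diamond_2 f(y)$, and reduce each bimodule axiom to one of the tridendriform axioms of $T'$, displaying a single representative computation and declaring the rest analogous. The only cosmetic difference is the choice of representative axiom: the paper displays the verification of $l_{\dashv_1}(x\dashv_1 y)\beta_2(z)=l_{\dashv_1}(\alpha_1(x))l_{\ast_1}(y)z$ via \eqref{t1}, whereas you display $l_{\dashv_1}(x\vdash_1 y)\beta_2(v)=l_{\vdash_1}(\alpha_1(x))l_{\dashv_1}(y)v$ via \eqref{1.2}.
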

\begin{proof}
We prove only one axiom, since other axioms are proved similarly.
For any $x,y\in T$ and $z\in T'$,
\begin{align*}
l_{\vdash_1}(x\dashv_1 y)\beta_2(z)&=f(x\dashv_1 y)\dashv_2\beta_2(z)\\
&=(f(x)\dashv_2 f(y))\dashv_2 \beta_2(z)=\beta_1 f(x)\dashv_2(f(y)\ast_{2} z)\\
&=f(\alpha_1(x))\dashv_2 l_{\ast_1}(y)z=l_{\dashv_1}(\alpha_{1}(x))l_{\ast_1}(y)z.
\qedhere
\end{align*}
\end{proof}
\begin{defn}
An abelian extension of BiHom-tridendriform algebra is a short exact sequence of BiHom-tridendriform algebra
$$0\longrightarrow (V,\alpha_{V},\beta_{V})\stackrel{\mbox{i}} \longrightarrow(T,\dashv_T, \vdash_T,\cdot_T,\alpha_{T},\beta_{T})\stackrel{\mbox{$\pi$}}\longrightarrow (T',\dashv_{T'}, \vdash_{T'},\cdot_{T'},\alpha_{T'},\beta_{T'})\longrightarrow 0 ,$$
where $(V,\alpha_{V},\beta_{V})$ is a trivial BiHom-tridendriform algebra, $i$ and $\pi$ are morphisms of BiHom-algebras. Furthermore, if there is a morphism $s:(T',\dashv_{T'}, \vdash_{T'},\cdot_{T'},\alpha_{T'},\beta_{T'})
\longrightarrow (T,\dashv_T, \vdash_T,\cdot_T,\alpha_{T},\beta_{T})$ such that $\pi\circ s=id_{T'}$, then the abelian extension is said to be split and $s$ is called a section of $\pi$.
\end{defn}
\begin{rmk} Consider the  split null extension $T\oplus V$ determined by the bimodule\\ $(l_{\dashv},r_{\dashv},l_{\vdash},r_{\vdash},l_\cdot,r_\cdot,\alpha_V,\beta_V,V)$ for the BiHom-tridendriform algebra
$(T,\dashv_T, \vdash_T,\cdot_T,\alpha,\beta)$ in the previous proposition. Write  elements $a+v$ of $T\oplus V$ as $(a,v).$ Then there is an injective homomorphism of BiHom-modules
$i :V\rightarrow T\oplus V $ given by $i(v)=(0,v)$ and a surjective homomorphism of BiHom-modules $\pi : T\oplus V\rightarrow T$ given by $\pi(a,v)=a.$
Moreover, $i(V)$ is a two-sided BiHom-ideal of $T\oplus V$  such that $T\oplus V/i(V)\cong T$. On the other hand, there is a morphism of BiHom-algebras
$\sigma: T\rightarrow T\oplus V$ given by $\sigma(a)=(a,0)$ which is clearly a section of $\pi.$ Hence, we obtain the abelian split exact sequence of
BiHom-tridendriform algebra and $(l_{\dashv},r_{\dashv},l_{\vdash},r_{\vdash},l_\cdot,r_\cdot, \alpha_V,\beta_{V},V)$ is a bimodule for $T$ via $\pi.$
 \end{rmk}
\begin{prop}\label{propa}
Let ($ l_{\dashv}, r_{\dashv}, l_{\vdash}, r_{\vdash},l_{\cdot}, r_{\cdot}, \beta_{1}, \beta_{2}, V$) be a bimodule of a BiHom-triden\-driform algebra $(T, \dashv, \vdash,\cdot, \alpha_{1}, \alpha_{2})$. Let $(T, \ast, \alpha_{1}, \alpha_{2})$ be the associated BiHom-associative algebra. Then,
$( l_{\dashv}+l_{\vdash}+l_{\cdot}, r_{\dashv}+r_{\vdash}+r_{\cdot},\beta_{1}, \beta_{2}, V)$ is a bimodule of $(T, \ast, \alpha_{1}, \alpha_{2})$.
\end{prop}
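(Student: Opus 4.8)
The plan is to check directly that the quintuple $(l_{\ast},r_{\ast},\beta_1,\beta_2,V)$, with $l_{\ast}=l_{\dashv}+l_{\vdash}+l_{\cdot}$ and $r_{\ast}=r_{\dashv}+r_{\vdash}+r_{\cdot}$, satisfies the seven defining relations of a bimodule of the BiHom-associative algebra $(T,\ast,\alpha_1,\alpha_2)$, where $x\ast y=x\dashv y+x\vdash y+x\cdot y$ is the product furnished by Theorem~\ref{tp}. This is the bimodule-level counterpart of the construction in that theorem, and the mechanism of proof is the same.

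First I would dispose of the four $\beta_i$-equivariance relations for $(l_{\ast},r_{\ast},\beta_1,\beta_2,V)$, namely $\beta_1(l_{\ast}(x)v)=l_{\ast}(\alpha_1(x))\beta_1(v)$ together with its three companions obtained by replacing $l_{\ast}$ by $r_{\ast}$ and/or $\beta_1$ by $\beta_2$. Each of these is obtained simply by adding the corresponding equivariance relations for $l_{\dashv},l_{\vdash},l_{\cdot}$ (respectively $r_{\dashv},r_{\vdash},r_{\cdot}$) supplied by the bimodule structure of $V$ over $(T,\dashv,\vdash,\cdot,\alpha_1,\alpha_2)$.

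The remaining, structural, relations to be established are
\begin{align*}
l_{\ast}(x\ast y)\beta_2(v)&=l_{\ast}(\alpha_1(x))l_{\ast}(y)v,\\
r_{\ast}(x\ast y)\beta_1(v)&=r_{\ast}(\alpha_2(y))r_{\ast}(x)v,\\
l_{\ast}(\alpha_1(x))r_{\ast}(y)v&=r_{\ast}(\alpha_2(y))l_{\ast}(x)v.
\end{align*}
For each of these I would expand the product $\ast=\dashv+\vdash+\cdot$ and the operators $l_{\ast},r_{\ast}$ into their three summands, so that each side becomes a sum of nine terms, each obtained by selecting one of $l_{\dashv},l_{\vdash},l_{\cdot}$ (or $r_{\dashv},r_{\vdash},r_{\cdot}$) together with one of the three products $\dashv,\vdash,\cdot$. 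Every one of these nine terms is then rewritten by means of one of the defining identities of a bimodule of $(T,\dashv,\vdash,\cdot,\alpha_1,\alpha_2)$; note that three of those identities are already written with $l_{\ast}$ or $r_{\ast}$ on one side, so they simultaneously account for three of the nine terms at once. After this substitution, collecting the common outer factors $l_{\dashv}(\alpha_1(x))$, $l_{\vdash}(\alpha_1(x))$, $l_{\cdot}(\alpha_1(x))$ and their $r$-analogues reassembles $l_{\ast}$ and $r_{\ast}$ and yields the desired identity. This regrouping is precisely the same bookkeeping that makes the proof of Theorem~\ref{tp} work.

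The only genuine point requiring care — and hence the main, purely combinatorial, obstacle — is to make the nine-term matching exact: one must keep track of which product $x\dashv y$, $x\vdash y$, $x\cdot y$ sits inside each operator and which summand of $l_{\ast}$ (resp. $r_{\ast}$) acts, and confirm that the ``bundled'' axioms (those phrased with $l_{\ast}$ or $r_{\ast}$) are invoked exactly where three single terms ought to be combined, leaving no term unaccounted for. Carrying out this term-by-term check for the three structural relations, together with the equivariance relations handled above, shows that $(l_{\dashv}+l_{\vdash}+l_{\cdot},\,r_{\dashv}+r_{\vdash}+r_{\cdot},\,\beta_1,\beta_2,V)$ is a bimodule of $(T,\ast,\alpha_1,\alpha_2)$.
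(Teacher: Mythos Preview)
Your proposal is correct and follows essentially the same approach as the paper: the paper verifies just the identity $l_{\ast}(x\ast y)\beta_2(v)=l_{\ast}(\alpha_1(x))l_{\ast}(y)v$ by expanding $l_{\ast}$ and $\ast$ into their three summands, applying the tridendriform bimodule axioms term by term (keeping the ``bundled'' axiom $l_{\vdash}(x\ast y)\beta_2(v)=l_{\vdash}(\alpha_1(x))l_{\vdash}(y)v$ intact), and regrouping, then declares the remaining axioms analogous. Your outline is the same computation, only more explicit about enumerating all seven relations and the nine-term bookkeeping.
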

\begin{proof}
We prove only one axiom. The other axioms are proved similarly. For any $x, y \in A, v \in V$,
\begin{align*}
&l_\ast(x\ast y)\beta_2(v)=(l_\dashv+l_\vdash+l_\cdot)(x\ast y)\beta_2(v) =(l_\dashv+l_\vdash+l_\cdot)(x\dashv y+x\vdash y+x\cdot y)\beta_2(v)\\
&\quad=l_\dashv(x\dashv y)\beta_2(v)+l_\dashv(x\vdash y)\beta_2(v)+l_\dashv(x\cdot y)\beta_2(v)+l_\vdash(x\ast y)\beta_2(v)\\
&\quad\quad +l_\cdot(x\dashv y)\beta_2(v)+l_\cdot(x\vdash y)\beta_2(v)+l_\cdot(x\cdot y)\beta_2(v)\\
&\quad =l_\dashv(\alpha_1(x))l_\ast(y)v+l_\vdash(\alpha_1(x))l_\dashv(y)v+l_\cdot(\alpha_1(x))l_\dashv(y)v+l_\vdash(\alpha_1(x))l_\vdash(y)v\\
&\quad\quad+l_\cdot(\alpha_1(x))l_\vdash(y)v+l_\vdash(\alpha_1(x))l_\cdot(y)v+l_\cdot(\alpha_1(x))l_\cdot(y)v\\
&\quad=(l_\dashv+l_\vdash+l_\cdot)(\alpha_1(x))(l_\dashv+l_\vdash+l_\cdot)(y)v=l_\ast(\alpha_1(x))l_\ast(y)v.
\qedhere \end{align*}
\end{proof}
\begin{thm}\label{mamm1}
Let $(T,\dashv, \vdash,\cdot,\alpha_1,\alpha_2)$ be a BiHom-tridendriform algebra, and   $V_{\beta_1,\beta_2}=(l_{\dashv},r_{\dashv},l_{\vdash},r_{\vdash},l_{\cdot},r_{\cdot},\beta_1,\beta_2,V)$ be a bimodule of $T$. Let $\alpha'_1,\alpha'_2$ be two endomorphisms of $T$ such that any two of the maps $\alpha_1,\alpha'_1,\alpha_2,\alpha'_2$ commute
and $\beta'_1,~\beta'_2$ be linear maps of $V$ such that any two of the maps $\beta_1,\beta'_1,\beta_2,\beta'_2$ commute. Suppose furthermore that
$$\left\{
   \begin{array}{lllllll}
    \beta'_1\circ l_\dashv=(l_\dashv\circ\alpha'_1)\beta'_1,~~
     \beta'_2\circ l_\dashv=(l_\dashv\circ\alpha'_2)\beta'_2,& \\
         \beta'_1\circ l_\vdash=(l_\vdash\circ\alpha'_1)\beta'_1,~~
     \beta'_2\circ l_\vdash=(l_\vdash\circ\alpha'_2)\beta'_2,&\\
      \beta'_1\circ l_\cdot=(l_\cdot\circ\alpha'_1)\beta'_1,~~
     \beta'_2\circ l_\cdot=(l_\cdot\circ\alpha'_2)\beta'_2,& \\
   \end{array}
 \right.$$
$$\left\{
   \begin{array}{lllllll}
    \beta'_1\circ r_\dashv=(r_\dashv\circ\alpha'_1)\beta'_1,~~
     \beta'_2\circ r_\dashv=(r_\dashv\circ\alpha'_2)\beta'_2,& \\
         \beta'_1\circ r_\vdash=(r_\vdash\circ\alpha'_1)\beta'_1,~~
     \beta'_2\circ r_\vdash=(r_\vdash\circ\alpha'_2)\beta'_2,&\\
      \beta'_1\circ r_\cdot=(r_\cdot\circ\alpha'_1)\beta'_1,~~
     \beta'_2\circ r_\cdot=(r_\cdot\circ\alpha'_2)\beta'_2,&
   \end{array}
 \right.$$
and write $T_{\alpha'_1,\alpha'_2}=(T,\dashv_{\alpha'_1,\alpha'_2}, \vdash_{\alpha'_1,\alpha'_2},\cdot_{\alpha'_1,\alpha'_2},\alpha_1\alpha'_1,\alpha_2\alpha'_2)$
for the BiHom-tridendriform algebra, and
$V_{\beta'_1,\beta'_2}=(\widetilde{l}_{\dashv},\widetilde{r}_{\dashv},\widetilde{l}_{\vdash},
\widetilde{r}_{\vdash},\widetilde{l}_{\cdot},\widetilde{r}_{\cdot},\beta_1\beta'_1,\beta_2\beta'_2,V)$, where
\begin{equation}
\begin{array}{llll}
\widetilde{l}_{\dashv}&=(l_{\dashv}\circ\alpha'_1)\beta'_2,  &\widetilde{r}_{\dashv}&=(r_{\dashv}\circ\alpha'_2)\beta'_1, \\ \widetilde{l}_{\vdash}&=(l_{\vdash}\circ\alpha'_1)\beta'_2,
&\widetilde{r}_{\vdash}&=(r_{\vdash}\circ\alpha'_2)\beta'_1,\\ \widetilde{l}_{\cdot}&=(l_{\cdot}\circ\alpha'_1)\beta'_2, &\widetilde{r}_{\cdot}&=(r_{\cdot}\circ\alpha'_2)\beta'_1.
\end{array}
\end{equation}
This gives the BiHom-module $V_{\beta'_1,\beta'_2}$ the structure of $T_{\alpha'_1,\alpha'_2}$-bimodule.
\end{thm}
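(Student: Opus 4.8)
The proof follows exactly the template of the proof of Theorem \ref{mamm}, with the longer list of tridendriform bimodule axioms to verify. First I would fix notation for the twisted data. Following the convention used in Theorem \ref{th1.1}, the three products of $T_{\alpha'_1,\alpha'_2}$ are $x\dashv_{\alpha'_1,\alpha'_2}y=\alpha'_1(x)\dashv\alpha'_2(y)$, $x\vdash_{\alpha'_1,\alpha'_2}y=\alpha'_1(x)\vdash\alpha'_2(y)$ and $x\cdot_{\alpha'_1,\alpha'_2}y=\alpha'_1(x)\cdot\alpha'_2(y)$, and the twisted module maps act by $\widetilde{l}_{\dashv}(x)v=l_{\dashv}(\alpha'_1(x))\beta'_2(v)$, $\widetilde{r}_{\dashv}(x)v=r_{\dashv}(\alpha'_2(x))\beta'_1(v)$, and similarly for $\vdash$ and $\cdot$. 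Adding the three left (respectively right) components one gets $\widetilde{l}_{\ast}=(l_{\ast}\circ\alpha'_1)\beta'_2$ and $\widetilde{r}_{\ast}=(r_{\ast}\circ\alpha'_2)\beta'_1$, where $l_{\ast}=l_{\dashv}+l_{\vdash}+l_{\cdot}$ and $r_{\ast}=r_{\dashv}+r_{\vdash}+r_{\cdot}$; this is the crucial point that makes the axioms involving $l_{\ast}$ or $r_{\ast}$ survive the twist. Likewise, summing the three relations in each hypothesis box gives $\beta'_i\circ l_{\ast}=(l_{\ast}\circ\alpha'_i)\beta'_i$ and $\beta'_i\circ r_{\ast}=(r_{\ast}\circ\alpha'_i)\beta'_i$ for $i=1,2$.

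Then I would verify one representative identity in full and declare the rest analogous. For instance, the twisted form of the first bimodule axiom is
\[
\widetilde{l}_{\dashv}\bigl(x\dashv_{\alpha'_1,\alpha'_2}y\bigr)(\beta_2\beta'_2)(v)=\widetilde{l}_{\dashv}\bigl((\alpha_1\alpha'_1)(x)\bigr)\,\widetilde{l}_{\ast}(y)v .
\]
Using that $\alpha'_1$ is an endomorphism of $T$ and that the twisting maps pairwise commute, the left-hand side equals $l_{\dashv}\bigl(\alpha'^{2}_1(x)\dashv\alpha'_1\alpha'_2(y)\bigr)\beta_2\beta'^{2}_2(v)$; applying the untwisted axiom $l_{\dashv}(u\dashv w)\beta_2(v')=l_{\dashv}(\alpha_1(u))l_{\ast}(w)v'$ of the $T$-bimodule $V_{\beta_1,\beta_2}$ with $u=\alpha'^{2}_1(x)$, $w=\alpha'_1\alpha'_2(y)$ and $v'=\beta'^{2}_2(v)$ rewrites it as $l_{\dashv}\bigl(\alpha_1\alpha'^{2}_1(x)\bigr)l_{\ast}\bigl(\alpha'_1\alpha'_2(y)\bigr)\beta'^{2}_2(v)$. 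On the other side, expanding $\widetilde{l}_{\ast}$ and $\widetilde{l}_{\dashv}$ and then pushing $\beta'_2$ through $l_{\ast}$ by means of $\beta'_2\circ l_{\ast}=(l_{\ast}\circ\alpha'_2)\beta'_2$ yields the same expression. The only ingredients used are the pairwise commutation of $\alpha_1,\alpha'_1,\alpha_2,\alpha'_2$ and of $\beta_1,\beta'_1,\beta_2,\beta'_2$, the multiplicativity of the endomorphisms $\alpha'_1,\alpha'_2$ with respect to $\dashv$, $\vdash$ and $\cdot$, the compatibility relations in the two hypothesis boxes, and the corresponding axiom of $V_{\beta_1,\beta_2}$ as a bimodule of $T$.

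The remainder is a routine but lengthy case analysis over the whole list of tridendriform bimodule axioms: the identities relating two left actions, a right action and a left action, and two right actions, together with all their mixed $\dashv/\vdash/\cdot$ variants, plus the equivariance identities of the form $\beta_i(l_{\bullet}(x)v)=l_{\bullet}(\alpha_i(x))\beta_i(v)$. Each is handled by the same three moves: absorb the $\alpha'$'s into the products using that $\alpha'_1,\alpha'_2$ are morphisms of $T$, move the $\beta'$'s through the module maps using the compatibility boxes, and then invoke the untwisted axiom with suitably twisted arguments. The main obstacle is purely organizational, namely keeping exact track of the powers of $\alpha'_1,\alpha'_2,\beta'_1,\beta'_2$ that accumulate on each side so that they coincide after reindexing, and remembering to use $\widetilde{l}_{\ast}=(l_{\ast}\circ\alpha'_1)\beta'_2$ and $\widetilde{r}_{\ast}=(r_{\ast}\circ\alpha'_2)\beta'_1$ rather than twisting $l_{\ast}$ and $r_{\ast}$ componentwise in the wrong order. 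No idea beyond the one already used for Theorem \ref{mamm} is needed, so in the written proof I would display the single verification above and note that all the remaining axioms are checked in exactly the same way.
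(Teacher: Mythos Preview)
Your proposal is correct and follows essentially the same approach as the paper: verify one representative bimodule axiom by unwinding the twisted definitions, applying the corresponding untwisted axiom with shifted arguments, and then repackaging, while declaring the remaining axioms analogous. You even chose the same axiom as the paper for the displayed verification, and your explicit observation that $\widetilde{l}_{\ast}=(l_{\ast}\circ\alpha'_1)\beta'_2$ and $\widetilde{r}_{\ast}=(r_{\ast}\circ\alpha'_2)\beta'_1$ makes the repackaging step cleaner than in the paper's own write-up.
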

\begin{proof}
We prove only one axiom, since other axioms are proved similarly.
For any $x,y\in T$ and $v\in V$,
\begin{align*}
\widetilde{l}_{\dashv}(x\dashv_{\alpha'_1\alpha'_2}y)\beta_2\beta'_2(v)
&=\widetilde{l}_{\dashv}(\alpha'(x)\dashv_{\alpha'_1\alpha'_2}\alpha'(y))\beta_2\beta'_2(v)\\
&=l_{\dashv}(\alpha'{2}_{1}^(x)\dashv\alpha'_1\alpha'_2(y))\beta_2\beta'{2}_{2}^(v)\\
&=l_{\dashv}(\alpha_1\alpha'^{2}_{1}(x))l_{\ast}(\alpha'_1\alpha'_2(y))\beta'^{2}_{2}(v)\\
&=\widetilde{l}_{\dashv}(\alpha_1\alpha'_1(x))l_{\ast}(\alpha'_1(y))\beta'_2(v)\\
&=\widetilde{l}_{\dashv}(\alpha_1\alpha'_1(x))\widetilde{l}_{\ast}(y)v.
\qedhere
\end{align*}
\end{proof}
Let ($ l_{\dashv}, r_{\dashv}, l_{\vdash}, r_{\vdash},l_{\cdot}, r_{\cdot}, \beta_{1}, \beta_{2}, V$) be a bimodule of a BiHom-tridendriform algebra $(T, \dashv, \vdash,\cdot, \alpha_{1}, \alpha_{2})$ and $l_{\dashv}^{\ast}, r_{\dashv}^{\ast}, l_{\vdash}^{\ast}, r_{\vdash}^{\ast},l_{\cdot}^{\ast}, r_{\cdot}^{\ast}:T\rightarrow gl(V^{\ast}).$ Let  $\alpha_1^{\ast},\alpha_{2}^{\ast}:T^{\ast}\rightarrow T^{\ast},~~\beta_{1}^{\ast},\beta_{2}^{\ast}:V^{\ast}\rightarrow V^{\ast}$ be the dual maps of respectively $\alpha_1,\alpha_2,\beta_1$ and $\beta_2$ such that
$$\begin{array}{llllllll}
  \langle l_{\dashv}^{\ast}(x)u^{\ast},v\rangle =\langle u^{\ast},l_{\dashv}(x)v\rangle,&& \langle r^{\ast}_{\vdash}(x)u^{\ast},v\rangle =\langle u^{\ast},r_{\vdash}(x)v\rangle,\\
   \langle l_{\vdash}^{\ast}(x)u^{\ast},v\rangle =\langle u^{\ast},l_{\vdash}(x)v\rangle,&& \langle r^{\ast}_{\vdash}(x)u^{\ast},v\rangle =\langle u^{\ast},r_{\vdash}(x)v\rangle,\\
    \langle l_{\cdot}^{\ast}(x)u^{\ast},v\rangle =\langle u^{\ast},l_{\cdot}(x)v\rangle,&& \langle r^{\ast}_{\cdot}(x)u^{\ast},v\rangle =\langle u^{\ast},r_{\cdot}(x)v\rangle,\\
    \alpha_{1}^{\ast}(x^{\ast}(y)=x^{\ast}(\alpha_{1}(y)),&& \alpha_{2}^{\ast}(x^{\ast}(y)=x^{\ast}(\alpha_{2}(y)),\\
     \beta_{1}^{\ast}(u^{\ast}(v)=u^{\ast}(\beta_{1}(v)),&& \beta_{2}^{\ast}(u^{\ast}(v)=u^{\ast}(\beta_{2}(v)).
\end{array}$$
\begin{prop}
Let ($ l_{\dashv}, r_{\dashv}, l_{\vdash}, r_{\vdash},l_{\cdot}, r_{\cdot}, \beta_{1}, \beta_{2}, V$) be a bimodule of a BiHom-tri\-dendriform algebra $(T, \dashv, \vdash,\cdot, \alpha_{1}, \alpha_{2})$. Then ($ l_{\dashv}^{\ast}, r_{\dashv}^{\ast}, l_{\vdash}^{\ast}, r_{\vdash}^{\ast},l_{\cdot}^{\ast}, r_{\cdot}^{\ast}, \beta_{1}^{\ast}, \beta_{2}^{\ast}, V^{\ast}$) is a bimodule of
$(T, \dashv, \vdash,\cdot, \alpha_{1}, \alpha_{2})$ provided that

$$\begin{array}{llllllll}
\beta_{2}(l_{\dashv}(x \dashv y))u&=& l_{\ast}(y)l_{\dashv}(\alpha_{1}(x))u,&& l_{\dashv}(y)r_{\dashv}(\alpha_{2}(x))u&=&r_{\ast}(x)l_{\dashv}(\alpha_{1}(y))u,\\
r_{\dashv}(y)r_{\dashv}(\alpha_{2}(y))u &=& \beta_{1}(r_{\dashv}(x\ast y))u,&& \beta_{2}(l_{\dashv}(x \vdash y))u &=&l_{\dashv}(y) l_{\vdash}(\alpha_{1}(x))u,\\
l_{\vdash}(y)r_{\dashv}(\alpha_{2}(x))u &=& r_{\dashv}(x)l_{\vdash}(\alpha_{1}(y))u,&& r_{\vdash}(y)r_{\dashv}(\alpha_{2}(x))u &=& \beta_{1}(r_{\vdash}(y\dashv x))u,\\
\beta_{2}(l_{\vdash}(x\ast y))u &=& l_{\vdash}(y)l_{\vdash}(\alpha_{1}(x))u,&& l_{\ast}(y)r_{\vdash}(\alpha_{2}(x))u&=& r_{\vdash}(x)l_{\vdash}(\alpha_{1}(y))u,\\
r_{\ast}(y)r_{\vdash}(\alpha_{2}(x))u &=& \beta_{1}(r_{\vdash}(y \vdash x))u,&& \beta_{2}(l_{\cdot}(x\dashv y))u&=&l_{\vdash}(y)l_{\cdot}(\alpha_{1}(x))u,\\
l_{\dashv}(y)r_{\cdot}(\alpha_{2}(x))u&=&r_{\vdash}(x)l_{\cdot}(\alpha_{1}(y))u,&&r_{\dashv}(y) r_{\cdot}(\alpha_{2}(x))u&=&\beta_{1}(r_{\cdot}(y\vdash x))u,\\
 \beta_{2}(l_{\cdot}(x\vdash y))u&=&l_{\cdot}(y)l_{\vdash}(\alpha_{1}(x))u,&& l_{\vdash}(y)r_{\cdot}(\alpha_{2}(x))u&=&r_{\cdot}(x)l_{\vdash}(\alpha_{1}(y))u,\\
r_{\vdash}(y) r_{\cdot}(\alpha_{2}(x))u&=&\beta_{1}(r_{\vdash}(y\cdot x))u,&& \beta_{2}(l_{\dashv}(x\cdot y))u&=&l_{\dashv}(y)l_{\cdot}(\alpha_{1}(x))u, \\ l_{\cdot}(y)r_{\dashv}(\alpha_{2}(x))u&=&r_{\dashv}(x)l_{\cdot}(\alpha_{1}(y))u,&&r_{\cdot}(y) r_{\dashv}(\alpha_{2}(x))u&=&\beta_{1}(r_{\cdot}(y\dashv x))u,\\
  \beta_{2}(l_{\cdot}(x\cdot y))u&=&l_{\cdot}(y)l_{\cdot}(\alpha_{1}(x))u,&& l_{\cdot}(y)r_{\cdot}(\alpha_{2}(x))u&=&r_{\cdot}(x)l_{\cdot}(\alpha_{1}(y))u, \\ r_{\cdot}(y)r_{\cdot}(\alpha_{2}(x))u&=&\beta_{1}(r_{\cdot}(y\cdot x))u.
\end{array}$$
where $ x \ast y = x \dashv y + x \vdash y+x\cdot y, l_{\ast} = l_{\dashv} + l_{\vdash}+l_{\cdot}, r_{\ast} = r_{\dashv} + r_{\vdash}+r_{\cdot}, $
for all $x,y\in T$ and $u\in V$.
\end{prop}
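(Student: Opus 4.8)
The plan is to transpose, one relation at a time, every defining identity of a bimodule of the BiHom-tridendriform algebra $(T,\dashv,\vdash,\cdot,\alpha_{1},\alpha_{2})$, by pairing it against an arbitrary $v\in V$ and using the nondegeneracy of the canonical pairing $\langle\,\cdot\,,\,\cdot\,\rangle$ between $V^{\ast}$ and $V$. First I would record the elementary adjunction formulas, immediate from the definitions of the dual maps: for the six operators $l_{\dashv},r_{\dashv},l_{\vdash},r_{\vdash},l_{\cdot},r_{\cdot}$ one has $\langle l_{\dashv}^{\ast}(x)u^{\ast},v\rangle=\langle u^{\ast},l_{\dashv}(x)v\rangle$ and likewise for the other five, together with $\langle\beta_{1}^{\ast}u^{\ast},v\rangle=\langle u^{\ast},\beta_{1}v\rangle$ and $\langle\beta_{2}^{\ast}u^{\ast},v\rangle=\langle u^{\ast},\beta_{2}v\rangle$; consequently, for a composition of two of these operators, $\langle A^{\ast}B^{\ast}u^{\ast},v\rangle=\langle u^{\ast},BA\,v\rangle$, so that transposition reverses the order of composition. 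I would also note $l_{\ast}^{\ast}=l_{\dashv}^{\ast}+l_{\vdash}^{\ast}+l_{\cdot}^{\ast}$ and $r_{\ast}^{\ast}=r_{\dashv}^{\ast}+r_{\vdash}^{\ast}+r_{\cdot}^{\ast}$, matching how $l_{\ast},r_{\ast}$ occur on the $V$ side.

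Next I would run through the structural axioms. For the first relation that $(l_{\dashv}^{\ast},\dots,V^{\ast})$ must satisfy, $l_{\dashv}^{\ast}(x\dashv y)\beta_{2}^{\ast}(u^{\ast})=l_{\dashv}^{\ast}(\alpha_{1}(x))l_{\ast}^{\ast}(y)u^{\ast}$, pairing the left side with $v$ yields $\langle u^{\ast},\beta_{2}(l_{\dashv}(x\dashv y)v)\rangle$, and pairing the right side with $v$, using the reversal rule, yields $\langle u^{\ast},l_{\ast}(y)l_{\dashv}(\alpha_{1}(x))v\rangle$; since this holds for all $u^{\ast},v$, the dual relation is equivalent to $\beta_{2}(l_{\dashv}(x\dashv y)v)=l_{\ast}(y)l_{\dashv}(\alpha_{1}(x))v$, which is precisely the first proviso in the statement. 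Repeating the identical computation on each remaining structural relation of a bimodule of $(T,\dashv,\vdash,\cdot,\alpha_{1},\alpha_{2})$ produces, term by term, exactly the displayed list of equalities: every $x\ast y$, $l_{\ast}$, $r_{\ast}$ on the $V$ side is replaced by the corresponding starred object, and in each two-fold composition the two operator factors get interchanged, which is why the provisos appear with their arguments in the reverse order to the original axioms. The remaining (equivariance) relations for the dual module, such as $\beta_{1}^{\ast}(l_{\dashv}^{\ast}(x)u^{\ast})=l_{\dashv}^{\ast}(\alpha_{1}(x))\beta_{1}^{\ast}(u^{\ast})$ and its companions with $r$, with $\vdash$, with $\cdot$ and with $\beta_{2},\alpha_{2}$, are verified by the same transposition against $v\in V$, using the corresponding equivariance relations already assumed for $(l_{\dashv},r_{\dashv},l_{\vdash},r_{\vdash},l_{\cdot},r_{\cdot},\beta_{1},\beta_{2},V)$, and $\alpha_{1}^{\ast}\beta_{1}^{\ast}=\beta_{1}^{\ast}\alpha_{1}^{\ast}$ follows by transposing $\alpha_{1}\beta_{1}=\beta_{1}\alpha_{1}$.

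The argument is thus entirely mechanical, and the only real difficulty is the bookkeeping. There are about two dozen structural identities, and the order reversal in compositions makes the mixed relations — those carrying $l_{\ast}$ or $r_{\ast}$, or an argument such as $x\dashv y$ twisted by $\alpha_{1}$ or $\alpha_{2}$ — look superficially unlike their undualized counterparts, so some care is needed in matching each transposed identity with the correct proviso. Once the adjunction formula and the reversal rule are in place, however, every individual case reduces to a one-line verification, and collecting them completes the proof.
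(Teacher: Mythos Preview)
The paper states this proposition without proof, so there is nothing to compare against directly. Your method --- pairing each required identity on $V^{\ast}$ against an arbitrary $v\in V$ and using the reversal rule $\langle A^{\ast}B^{\ast}u^{\ast},v\rangle=\langle u^{\ast},BA\,v\rangle$ --- is the standard one, and it correctly reduces each of the twenty-one structural bimodule axioms on $V^{\ast}$ to the corresponding listed proviso on $V$, exactly as you describe.

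There is, however, a genuine slip in your treatment of the equivariance relations. You assert that $\beta_{i}^{\ast}(l_{\bullet}^{\ast}(x)u^{\ast})=l_{\bullet}^{\ast}(\alpha_{i}(x))\beta_{i}^{\ast}(u^{\ast})$ follows by transposing the original $\beta_{i}(l_{\bullet}(x)v)=l_{\bullet}(\alpha_{i}(x))\beta_{i}(v)$. But transposing $\beta_{i}\circ l_{\bullet}(x)=l_{\bullet}(\alpha_{i}(x))\circ\beta_{i}$ gives $l_{\bullet}^{\ast}(x)\circ\beta_{i}^{\ast}=\beta_{i}^{\ast}\circ l_{\bullet}^{\ast}(\alpha_{i}(x))$, which is \emph{not} the relation you need; the $\alpha_{i}$-twist ends up on the wrong side. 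The two coincide only under extra hypotheses (for instance $\alpha_{i}$ involutive or bijective, so one can substitute $x\mapsto\alpha_{i}^{-1}(x)$). Since the proposition as stated lists no such hypothesis and no equivariance-type proviso, this is arguably a gap in the paper's statement rather than in your strategy; but you should not claim the equivariance step goes through ``by the same transposition'' when it does not.
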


\begin{thm}
Let $(A, \dashv_{A}, \vdash_{A},\cdot_A, \alpha_{1}, \alpha_{2})$ and $(B, \dashv_{B}, \vdash_{B}, \cdot_B,\beta_{1}, \beta_{2})$
 be two BiHom-tridendriform algebras. Suppose that there are linear maps
\begin{align*}
& l_{\dashv_{A}},   r_{\dashv_{A}},  l_{\vdash_{A}}, r_{\vdash_{A}},l_{\cdot_A},r_{\cdot_A} : A \rightarrow gl(B), \\
& l_{\dashv_{B}},   r_{\dashv_{B}},  l_{\vdash_{B}},  r_{\vdash_{B}},l_{\cdot_B},r_{\cdot_B} : B \rightarrow gl(A),
\end{align*}
such that
\begin{eqnarray*}
(l_{\dashv_{A}},   r_{\dashv_{A}},  l_{\vdash_{A}},
r_{\vdash_{A}},l_{\cdot_A},r_{\cdot_A}, \beta_{1}, \beta_{2}, B) \ \mbox{is a bimodule of}\ A, \\
(l_{\dashv_{B}},   r_{\dashv_{B}},  l_{\vdash_{B}},  r_{\vdash_{B}},l_{\cdot_B},r_{\cdot_B}
\alpha_{1}, \alpha_{2}, A) \ \mbox{is a bimodule  of}\ B,
\end{eqnarray*}
and for
\begin{align*}
&x\ast_A y=x\dashv_A y+x\vdash_A y+x\cdot_A y,~l_{A} = l_{\dashv_{A}} +
 l_{\vdash_{A}}+l_{\cdot_{A}}, r_{A} =  r_{\dashv_{A}} +  r_{\vdash_{A}}+r_{\cdot_{A}},\\
 &a\ast_B b=a\dashv_B b+a\vdash_B b+a\cdot_B b,~l_{B} =
 l_{\dashv_{B}} +  l_{\vdash_{B}}+l_{\cdot_{B}} , r_{B} =  r_{\dashv_{B}} +  r_{\vdash_{B}}+r_{\cdot_{B}}.\end{align*}
and for any $ x, y \in A,~ a, b \in B $,
\begin{eqnarray}
\label{bieq201}
r_{\dashv_{A}}(\alpha_{2}(x))(a \dashv_{B} b) = r_{A(l_{B}}(b)x)\beta_{1}(a) +
\beta_{1}(a)\dashv_{B} (r_{\dashv_{A}}(x)b), \\
\label{bieq202}
\begin{array}{ll}
l_{\dashv_{A}}(l_{\dashv_{B}}(a)x)\beta_{2}(b) & + (r_{\dashv_{A}}(x)a) \dashv_{B}\beta_{2}(b)=\cr
& \beta_{1}(a)\dashv_{B} (l_{\dashv_{A}}(x)b) + r_{\dashv_{A}}(r_{\dashv_{B}}(b)x)\beta_{1}(a),
\end{array} \\
\label{bieq203}
l_{\dashv_{A}}(\alpha_{1}(x))(a \ast_{B} b) = ( l_{\dashv_{A}}(x)a) \ast_{B}\beta_{2}(b) +
 l_{\dashv_{A}}(r_{\dashv_{B}}(a)x)\beta_{2}(b), \\
 \label{bieq204}
r_{\dashv_{A}}(\alpha_{2}(x))(a \vdash_{B} b) = r_{\vdash_{A}}(l_{\dashv_{B}}(b)x)\beta_{1}(a) +
\beta_{1}(a)\vdash_{B} (r_{\dashv_{A}}(x)b), \\
\label{bieq205}
\begin{array}{ll}
l_{\dashv_{A}}(l_{\vdash_{B}}(a)x)\beta_{2}(b) & + (r_{\vdash_{A}}(x)a) \dashv_{B}\beta_{2}(b)=\cr
& \beta_{1}(a)\vdash_{B} (l_{\dashv_{A}}(x)b) + r_{\vdash_{A}}(r_{\dashv_{B}}(b)x)\beta_{1}(a),
\end{array} \\
\label{bieq206}
l_{\dashv_{A}}(\alpha_{1}(x))(a \dashv_{B} b) = ( l_{\vdash_{A}}(x)a) \dashv_{B}\beta_{2}(b) +
 l_{\dashv_{A}}(r_{\vdash_{B}}(a)x)\beta_{2}(b), \\
\label{bieq207}
r_{\vdash_{A}}(\alpha_{2}(x))(a \ast_{B} b) = r_{\vdash_{A}}(l_{\vdash_{B}}(b)x)\beta_{1}(a) +
\beta_{1}(a)\vdash_{B} (r_{\vdash_{A}}(x)b), \\
\label{bieq208}
\begin{array}{ll}
l_{\vdash_{A}}(l_{A}(a)x)\beta_{2}(b) & + (r_{A}(x)a) \vdash_{B}\beta_{2}(b)=\cr
& \beta_{1}(a)\vdash_{B} (l_{\vdash_{A}}(x)b) + r_{\vdash_{A}}(r_{\vdash_{B}}(b)x)\beta_{1}(a),
\end{array} \\
\label{bieq209}
l_{\vdash_{A}}(\alpha_{1}(x))(a \vdash_{B} b) = ( l_{\vdash_{A}}(x)a) \vdash_{B}\beta_{2}(b) +
 l_{A}(r_{B}(a)x)\beta_{2}(b), \\
 \label{bieq210}
r_{\cdot_{A}}(\alpha_{2}(x))(a \dashv_{B} b) = r_{\cdot_{A}}(l_{\vdash_{B}}(b)x)\beta_{1}(a) +
\beta_{1}(a)\cdot_{B} (r_{\vdash_{A}}(x)b), \\
\label{bieq211}
\begin{array}{ll}
l_{\cdot_{A}}(l_{\dashv_{B}}(a)x)\beta_{2}(b) & + (r_{\dashv_{A}}(x)a) \cdot_{B}\beta_{2}(b)=\cr
& \beta_{1}(a)\cdot_{B} (l_{\vdash_{A}}(x)b) + r_{\cdot_{A}}(r_{\vdash_{B}}(b)x)\beta_{1}(a),
\end{array} \\
\label{bieq212}
l_{\cdot_{A}}(\alpha_{1}(x))(a \vdash_{B} b) = ( l_{\dashv_{A}}(x)a) \cdot_{B}\beta_{2}(b) +
 l_{\cdot_{A}}(r_{\dashv_{B}}(a)x)\beta_{2}(b), \\
\label{bieq213}
r_{\cdot_{A}}(\alpha_{2}(x))(a \vdash_{B} b) = r_{\vdash_{A}}(l_{\cdot_{B}}(b)x)\beta_{1}(a) +
\beta_{1}(a)\vdash_{B} (r_{\cdot_{A}}(x)b), \\
\label{bieq214}
\begin{array}{ll}
l_{\cdot_{A}}(l_{\vdash_{B}}(a)x)\beta_{2}(b) & + (r_{\vdash_{A}}(x)a) \cdot_{B}\beta_{2}(b)=\cr
& \beta_{1}(a)\vdash_{B} (l_{\cdot_{A}}(x)b) + r_{\vdash_{A}}(r_{\cdot_{B}}(b)x)\beta_{1}(a),
\end{array} \\
\label{bieq215}
l_{\vdash_{A}}(\alpha_{1}(x))(a \cdot_{B} b) = ( l_{\vdash_{A}}(x)a) \cdot_{B}\beta_{2}(b) +
 l_{\cdot_{A}}(r_{\vdash_{B}}(a)x)\beta_{2}(b), \\
 \label{bieq216}
r_{\dashv_{A}}(\alpha_{2}(x))(a \cdot_{B} b) = r_{\cdot_{A}}(l_{\dashv_{B}}(b)x)\beta_{1}(a) +
\beta_{1}(a)\cdot_{B} (r_{\dashv_{A}}(x)b), \\
\label{bieq217}
\begin{array}{ll}
l_{\dashv_{A}}(l_{\cdot_{B}}(a)x)\beta_{2}(b) & + (r_{\cdot_{A}}(x)a) \dashv_{B}\beta_{2}(b)=\cr
& \beta_{1}(a)\cdot_{B} (l_{\dashv_{A}}(x)b) + r_{\cdot_{A}}(r_{\dashv_{B}}(b)x)\beta_{1}(a),
\end{array} \\
\label{bieq218}
l_{\cdot_{A}}(\alpha_{1}(x))(a \dashv_{B} b) = ( l_{\cdot_{A}}(x)a) \dashv_{B}\beta_{2}(b) +
 l_{\dashv_{A}}(r_{\cdot_{B}}(a)x)\beta_{2}(b), \\
 \label{bieq219}
r_{\cdot_{A}}(\alpha_{2}(x))(a \cdot_{B} b) = r_{\cdot_{A}}(l_{\cdot_{B}}(b)x)\beta_{1}(a) +
\beta_{1}(a)\cdot_{B} (r_{\cdot_{A}}(x)b), \\
\label{bieq220}
\begin{array}{ll}
l_{\cdot_{A}}(l_{\cdot_{B}}(a)x)\beta_{2}(b) & + (r_{\cdot_{A}}(x)a) \cdot_{B}\beta_{2}(b)=\cr
& \beta_{1}(a)\cdot_{B} (l_{\cdot_{A}}(x)b) + r_{\cdot_{A}}(r_{\cdot_{B}}(b)x)\beta_{1}(a),
\end{array} \\
\label{bieq221}
l_{\cdot_{A}}(\alpha_{1}(x))(a \cdot_{B} b) = ( l_{\cdot_{A}}(x)a) \cdot_{B}\beta_{2}(b) +
 l_{\cdot_{A}}(r_{\cdot_{B}}(a)x)\beta_{2}(b),\\
 \label{bieq222}
r_{\dashv_{B}}(\beta_{2}(a))(x \dashv_{A} y) = r_{B(l_{A}}(y)a)\alpha_{1}(x) +
\alpha_{1}(x)\dashv_{A} (r_{\dashv_{B}}(a)y), \\
\label{bieq223}
\begin{array}{ll}
l_{\dashv_{B}}(l_{\dashv_{A}}(x)a)\alpha_{2}(y) & + (r_{\dashv_{B}}(a)x) \dashv_{A}\alpha_{2}(y)=\cr
& \alpha_{1}(x)\dashv_{B} (l_{\dashv_{B}}(a)y) + r_{\dashv_{B}}(r_{\dashv_{A}}(y)a)\alpha_{1}(x),
\end{array} \\
\label{bieq224}
l_{\dashv_{B}}(\beta_{1}(a))(x \ast_{A} y) = ( l_{\dashv_{B}}(a)x) \ast_{A}\alpha_{2}(y) +
 l_{\dashv_{B}}(r_{\dashv_{A}}(x)a)\alpha_{2}(y),\\
 \label{bieq225}
r_{\dashv_{B}}(\beta_{2}(a))(x \vdash_{A} y) = r_{\vdash_{B}}(l_{\dashv_{A}}(y)a)\alpha_{1}(x) +
\alpha_{1}(x)\vdash_{A} (r_{\dashv_{B}}(a)y), \\
\label{bieq226}
\begin{array}{ll}
l_{\dashv_{B}}(l_{\vdash_{A}}(x)a)\alpha_{2}(y) & + (r_{\vdash_{B}}(a)x) \dashv_{A}\alpha_{2}(y)=\cr
& \alpha_{1}(x)\vdash_{B} (l_{\dashv_{B}}(a)y) + r_{\vdash_{B}}(r_{\dashv_{A}}(y)a)\alpha_{1}(x),
\end{array} \\
\label{bieq227}
l_{\dashv_{B}}(\beta_{1}(a))(x \dashv_{A} y) = ( l_{\vdash_{B}}(a)x) \dashv_{A}\alpha_{2}(y) +
 l_{\dashv_{B}}(r_{\vdash_{A}}(x)a)\alpha_{2}(y),\\
\label{bieq228}
r_{\vdash_{B}}(\beta_{2}(a))(x \ast_{A} y) = r_{\vdash_{B}}(l_{\vdash_{A}}(y)a)\alpha_{1}(x) +
\alpha_{1}(x)\vdash_{A} (r_{\vdash_{B}}(a)y), \\
\label{bieq229}
\begin{array}{ll}
l_{\vdash_{B}}(l_{B}(x)a)\alpha_{2}(y) & + (r_{B}(a)x) \vdash_{A}\alpha_{2}(y)=\cr
& \alpha_{1}(x)\vdash_{B} (l_{\vdash_{B}}(a)y) + r_{\vdash_{B}}(r_{\vdash_{A}}(y)a)\alpha_{1}(x),
\end{array} \\
\label{bieq230}
l_{\vdash_{B}}(\beta_{1}(a))(x \vdash_{A} y) = ( l_{\vdash_{B}}(a)x) \vdash_{A}\alpha_{2}(y) +
 l_{B}(r_{A}(x)a)\alpha_{2}(y),\\
 \label{bieq231}
r_{\cdot_{B}}(\beta_{2}(a))(x \dashv_{A} y) = r_{\cdot_{B}}(l_{\vdash_{A}}(y)x)\alpha_{1}(x) +
\alpha_{1}(x)\cdot_{A} (r_{\vdash_{B}}(a)y), \\
\label{bieq232}
\begin{array}{ll}
l_{\cdot_{B}}(l_{\dashv_{A}}(x)a)\alpha_{2}(y) & + (r_{\dashv_{B}}(a)x) \cdot_{A}\alpha_{2}(y)=\cr
& \alpha_{1}(x)\cdot_{B} (l_{\vdash_{B}}(a)y) + r_{\cdot_{B}}(r_{\vdash_{A}}(y)a)\alpha_{1}(x),
\end{array} \\
\label{bieq233}
l_{\cdot_{B}}(\beta_{1}(a))(x \vdash_{A} y) = ( l_{\dashv_{B}}(a)x) \cdot_{A}\alpha_{2}(y) +
 l_{\cdot_{B}}(r_{\dashv_{A}}(x)a)\alpha_{2}(y),\\
\label{bieq234}
r_{\cdot_{B}}(\beta_{2}(a))(x \vdash_{A} y) = r_{\vdash_{B}}(l_{\cdot_{A}}(y)a)\alpha_{1}(x) +
\alpha_{1}(x)\vdash_{A} (r_{\cdot_{B}}(a)y), \\
\label{bieq235}
\begin{array}{ll}
l_{\cdot_{B}}(l_{\vdash_{A}}(x)a)\alpha_{2}(y) & + (r_{\vdash_{B}}(a)x) \cdot_{A}\alpha_{2}(y)=\cr
& \alpha_{1}(x)\vdash_{B} (l_{\cdot_{B}}(a)y) + r_{\vdash_{B}}(r_{\cdot_{A}}(y)a)\alpha_{1}(x),
\end{array} \\
\label{bieq236}
l_{\vdash_{B}}(\beta_{1}(a))(x \cdot_{A} y) = ( l_{\vdash_{B}}(a)x) \cdot_{A}\alpha_{2}(y) +
 l_{\cdot_{B}}(r_{\vdash_{A}}(x)a)\alpha_{2}(y),\\
 \label{bieq237}
r_{\dashv_{B}}(\beta_{2}(a))(x \cdot_{A} y) = r_{\cdot_{B}}(l_{\dashv_{A}}(y)a)\alpha_{1}(x) +
\alpha_{1}(x)\cdot_{A} (r_{\dashv_{B}}(a)y), \\
\label{bieq238}
\begin{array}{ll}
l_{\dashv_{B}}(l_{\cdot_{A}}(x)a)\alpha_{2}(y) & + (r_{\cdot_{B}}(a)x) \dashv_{A}\alpha_{2}(y)=\cr
& \alpha_{1}(x)\cdot_{B} (l_{\dashv_{B}}(a)y) + r_{\cdot_{B}}(r_{\dashv_{A}}(y)a)\alpha_{1}(x),
\end{array} \\
\label{bieq239}
l_{\cdot_{B}}(\beta_{1}(a))(x \dashv_{A} y) = ( l_{\cdot_{B}}(a)x) \dashv_{A}\alpha_{2}(y) +
 l_{\dashv_{B}}(r_{\cdot_{A}}(x)a)\alpha_{2}(y),\\
 \label{bieq240}
r_{\cdot_{B}}(\beta_{2}(a))(x \cdot_{A} y) = r_{\cdot_{B}}(l_{\cdot_{A}}(y)a)\alpha_{1}(x) +
\alpha_{1}(x)\cdot_{A} (r_{\cdot_{B}}(a)y), \\
\label{bieq241}
\begin{array}{ll}
l_{\cdot_{B}}(l_{\cdot_{A}}(x)a)\alpha_{2}(y) & + (r_{\cdot_{B}}(a)x) \cdot_{A}\alpha_{2}(y)=\cr
& \alpha_{1}(x)\cdot_{B} (l_{\cdot_{B}}(a)y) + r_{\cdot_{B}}(r_{\cdot_{A}}(y)a)\alpha_{1}(x),
\end{array} \\
\label{bieq242}
l_{\cdot_{B}}(\beta_{1}(a))(x \cdot_{A} y) = ( l_{\cdot_{B}}(a)x) \cdot_{A}\alpha_{2}(y) +
 l_{\cdot_{B}}(r_{\cdot_{A}}(x)x)\alpha_{2}(y).
  \end{eqnarray}
 Then, there is a BiHom-tridendriform algebra structure on the direct sum $ A \oplus B $ of the underlying vector spaces of
 $ A $ and $ B $ given for any $ x, y \in A, a, b \in B $ by
\begin{eqnarray*}
(x + a) \dashv ( y + b ) &:=& (x \dashv_{A} y + r_{\dashv_{B}}(b)x + l_{\dashv_{B}}(a)y)\cr
&+&(l_{\dashv_{A}}(x)b + r_{\dashv_{A}}(y)a + a \dashv_{B} b ), \cr
(x + a) \vdash ( y + b ) &:=& (x \vdash_{A} y + r_{\vdash_{B}}(b)x + l_{\vdash_{B}}(a)y)\cr
&+& (l_{\vdash_{A}}(x)b + r_{\vdash_{A}}(y)a + a \vdash_{B} b ),\cr
(x + a) \cdot ( y + b ) &:=& (x \cdot_{A} y + r_{\cdot_{B}}(b)x + l_{\cdot_{B}}(a)y)\cr
&+&(l_{\cdot_{A}}(x)b + r_{\cdot_{A}}(y)a + a \cdot_{B} b ).
\end{eqnarray*}
\end{thm}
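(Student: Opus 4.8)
The plan is to verify directly that the operations $\dashv$, $\vdash$, $\cdot$ defined on $A\oplus B$, together with the twisting maps $\alpha_1\oplus\beta_1$ and $\alpha_2\oplus\beta_2$, satisfy every axiom of Definition \ref{LMMP}, exactly in the spirit of the proof of Theorem \ref{matched ass}. First I would dispose of the routine axioms: the commutation $\alpha\circ\beta=\beta\circ\alpha$ on $A\oplus B$ follows from $\alpha_1\alpha_2=\alpha_2\alpha_1$ in $A$, $\beta_1\beta_2=\beta_2\beta_1$ in $B$, and the equalities $\beta_i\circ l_\star=(l_\star\circ\alpha_i)\beta_i$ type relations in the two bimodules; similarly the multiplicativity relations \eqref{t0.0}, \eqref{t0.00} and \eqref{t0.000} for each of $\dashv$, $\vdash$, $\cdot$ on $A\oplus B$ reduce, after expanding $(x+a)\dashv(y+b)$, $(x+a)\vdash(y+b)$, $(x+a)\cdot(y+b)$, to the corresponding multiplicativity relations in $A$ and in $B$ plus the last block of bimodule axioms (the ones asserting $\beta_i(l_\star(x)v)=l_\star(\alpha_i(x))\beta_i(v)$ and the analogue for $r_\star$), which match up termwise.

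The substance of the proof is the seven ``entanglement'' identities of Definition \ref{LMMP}, namely \eqref{t1}, \eqref{1.2}, \eqref{1.6} and the four unlabeled ones of the same form. For each of these I would substitute $x\mapsto x+a$, $y\mapsto y+b$, $z\mapsto z+c$ with $x,y,z\in A$ and $a,b,c\in B$, and expand both sides using the definitions of $\dashv$, $\vdash$, $\cdot$ on $A\oplus B$. On each side the resulting terms split into three groups: (i) terms lying entirely in $A$, which cancel against the matching terms on the other side by the fact that $(A,\dashv_A,\vdash_A,\cdot_A,\alpha_1,\alpha_2)$ is a BiHom-tridendriform algebra; (ii) terms lying entirely in $B$, handled the same way using that $B$ is a BiHom-tridendriform algebra; and (iii) the ``mixed'' terms, each built from exactly one letter of one algebra acting through some $l_\star$ or $r_\star$ on a product of two letters of the other algebra, together with products in the other algebra of such acted-upon elements. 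The mixed terms are precisely what the compatibility equations \eqref{bieq201}--\eqref{bieq242} are engineered to balance: for a fixed entanglement axiom, the mixed terms in which the ``outer'' letter comes from $A$ (acting on two letters of $B$) are controlled by one block of these relations, and those in which the outer letter comes from $B$ by the mirror block. I would carry this out in full for one representative, say \eqref{t1}, grouping the mixed terms and invoking the relevant subcollection of \eqref{bieq201}--\eqref{bieq242}, and then state that the remaining six identities are checked identically, each against its own block of compatibility relations.

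The main obstacle is bookkeeping rather than any conceptual point: because a BiHom-tridendriform algebra carries three products, each entanglement axiom expands on $A\oplus B$ into more than a dozen terms per side, and one must group them correctly so that the purely-$A$, purely-$B$, and mixed parts are cleanly isolated; within the mixed part one must further track which of the four twisting maps $\alpha_1,\alpha_2,\beta_1,\beta_2$ decorates each factor, since the compatibility relations are stated with specific twistings. As an organizing sanity check I would note that, by Theorem \ref{tp} and Proposition \ref{propa}, the whole structure must restrict to a matched pair of the associated BiHom-associative algebras in the sense of Theorem \ref{matched ass}, so the associative-type sums of \eqref{bieq201}--\eqref{bieq242} should reproduce \eqref{3}--\eqref{8}; this is not part of the proof, but it confirms that the long list of compatibility conditions is the correct one and helps to catch transcription errors while writing out the representative case.
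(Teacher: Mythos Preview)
Your proposal is correct and follows exactly the approach indicated in the paper, which simply states that the proof is obtained in a similar way as for Theorem \ref{matched ass}. In fact you have supplied considerably more detail than the paper itself provides.
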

\begin{proof}
The proof is obtained in a similar way as for Theorem \ref{matched ass}.
\end{proof}
Let $ A \bowtie^{l_{\dashv_{A}}, r_{\dashv_{A}},
l_{\vdash_{A}}, r_{\vdash_{A}},l_{\cdot_{A}}, r_{\cdot_{A}}, \beta_{1}, \beta_{1}}_{l_{\dashv_{B}}, r_{\dashv_{B}}, l_{\vdash_{B}},
r_{\vdash_{B}},l_{\cdot_{B}},
r_{\cdot_{B}}, \alpha_{1}, \alpha_{2}} B $ denote this BiHom-tridendriform algebra.

\begin{defn}
Let $ (A, \dashv_{A}, \vdash_{A},\cdot_{A}, \alpha_{1}, \alpha_{2}) $ and $  (B, \dashv_{B}, \vdash_{B},\cdot_{B}, \beta_{1}, \beta_{2}) $
be two BiHom-tridendriform algebras. Suppose there exist linear maps
$$ l_{\dashv_{A}}, r_{\dashv_{A}}, l_{\vdash_{A}}, r_{\vdash_{A}}, l_{\cdot_{A}}, r_{\cdot_{A}} : A \rightarrow gl(B),$$
$$ l_{\dashv_{B}}, r_{\dashv_{B}}, l_{\vdash_{B}}, r_{\vdash_{B}},l_{\cdot_{B}}, r_{\cdot_{B}} : B \rightarrow gl(A) $$
 such that $(l_{\dashv_{A}}, r_{\dashv_{A}}, l_{\vdash_{A}}, r_{\vdash_{A}},l_{\cdot_{A}}, r_{\cdot_{A}}, \beta_{1}, \beta_{2})$ is a bimodule of $ A,$
and $$(l_{\dashv_{B}}, r_{\dashv_{B}}, l_{\vdash_{B}}, r_{\vdash_{B}},l_{\cdot_{B}}, r_{\cdot_{B}},  \alpha_{1}, \alpha_{2}) \ \mbox{is a bimodule of}\  B.$$
If  \eqref{bieq201} - \eqref{bieq242} are satisfied, then $$(A, B, l_{\dashv_{A}},
r_{\dashv_{A}}, l_{\vdash_{A}}, r_{\vdash_{A}},l_{\cdot_{A}}, r_{\cdot_{A}}, \beta_{1}, \beta_{2}, l_{\dashv_{B}}, r_{\dashv_{B}}, l_{\vdash_{B}},
 r_{\vdash_{B}}, l_{\cdot_{B}}, r_{\cdot_{B}},\alpha_{1}, \alpha_{2})$$ is called a matched pair of BiHom-tridendriform algebras.
\end{defn}

\begin{cor}
Let $$(A, B, l_{\dashv_{A}}, r_{\dashv_{A}}, l_{\vdash_{A}}, r_{\vdash_{A}},l_{\cdot_{A}}, r_{\cdot_{A}}, \beta_{1}, \beta_{2},
 l_{\dashv_{B}}, r_{\dashv_{B}}, l_{\vdash_{B}}, r_{\vdash_{B}},l_{\cdot_{B}}, r_{\cdot_{B}}, \alpha_{1}, \alpha_{2}) $$
be a matched pair of BiHom-tridendriform algebras.
Then, $$(A, B, l_{\dashv_{A}} + l_{\vdash_{A}}+l_{\cdot_{A}}, r_{\dashv_{A}} + r_{\vdash_{A}}+r_{\cdot_{A}},\beta_{1}, \beta_{2},
l_{\dashv_{B}} + l_{\vdash_{B}}+l_{\cdot_{B}},  r_{\dashv_{B}} + r_{\vdash_{B}}+ r_{\cdot_{B}}, \alpha_{1}, \alpha_{2})$$ is a matched pair of the associated
BiHom-associative algebras $(A, \ast_{A}, \alpha_{1}, \alpha_{2})$ and  $(B, \ast_{B}, \beta_{1}, \beta_{2})$.
\end{cor}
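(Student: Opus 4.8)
The plan is to derive the claim from two ingredients already in place: Proposition~\ref{propa}, which passes a bimodule of a BiHom-tridendriform algebra to a bimodule of its associated BiHom-associative algebra, and Theorem~\ref{tp} together with the bicrossed-product construction for a matched pair of BiHom-tridendriform algebras. Writing $l_{A}=l_{\dashv_{A}}+l_{\vdash_{A}}+l_{\cdot_{A}}$, $r_{A}=r_{\dashv_{A}}+r_{\vdash_{A}}+r_{\cdot_{A}}$, and likewise $l_{B},r_{B}$, the first step is to apply Proposition~\ref{propa} to the bimodule $(l_{\dashv_{A}},r_{\dashv_{A}},l_{\vdash_{A}},r_{\vdash_{A}},l_{\cdot_{A}},r_{\cdot_{A}},\beta_{1},\beta_{2},B)$ of $A$ and to the bimodule $(l_{\dashv_{B}},r_{\dashv_{B}},l_{\vdash_{B}},r_{\vdash_{B}},l_{\cdot_{B}},r_{\cdot_{B}},\alpha_{1},\alpha_{2},A)$ of $B$; this gives at once that $(l_{A},r_{A},\beta_{1},\beta_{2},B)$ is a bimodule of $(A,\ast_{A},\alpha_{1},\alpha_{2})$ and that $(l_{B},r_{B},\alpha_{1},\alpha_{2},A)$ is a bimodule of $(B,\ast_{B},\beta_{1},\beta_{2})$. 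It then remains only to verify the six compatibility conditions \eqref{3}--\eqref{8} (with the products $\cdot_{A},\cdot_{B}$ there read as $\ast_{A},\ast_{B}$) for the totalized data.

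For this I would pass through the direct sum. Since $(A,B,\dots)$ is a matched pair of BiHom-tridendriform algebras, the bicrossed-product construction endows $A\oplus B$ with a BiHom-tridendriform structure $(\dashv,\vdash,\cdot,\alpha_{1}\oplus\beta_{1},\alpha_{2}\oplus\beta_{2})$, so by Theorem~\ref{tp} the totalized operation $\star=\dashv+\vdash+\cdot$ makes $(A\oplus B,\star,\alpha_{1}\oplus\beta_{1},\alpha_{2}\oplus\beta_{2})$ a BiHom-associative algebra. Adding the three explicit formulas for $\dashv,\vdash,\cdot$ on $A\oplus B$ collapses $\star$ to
\[
(x+a)\star(y+b)=\big(x\ast_{A}y+r_{B}(b)x+l_{B}(a)y\big)+\big(l_{A}(x)b+r_{A}(y)a+a\ast_{B}b\big),
\]
which is precisely the bicrossed-product multiplication attached in Theorem~\ref{matched ass} to $(A,\ast_{A})$, $(B,\ast_{B})$ and the actions $l_{A},r_{A},l_{B},r_{B}$. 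Thus this bicrossed product is BiHom-associative, and I would extract \eqref{3}--\eqref{8} from the identity $(\alpha_{1}\oplus\beta_{1})(u)\star(v\star w)=(u\star v)\star(\alpha_{2}\oplus\beta_{2})(w)$ by projecting onto the $A$- and $B$-summands and specializing $u,v,w$ (with all complementary components set to zero) to the six cases $(u,v,w)=(x,a,b)$, $(a,b,x)$, $(a,x,b)$, $(a,x,y)$, $(x,y,a)$, $(x,a,y)$ for $x,y\in A$, $a,b\in B$. In each case one projection reproduces exactly one of \eqref{3}--\eqref{8}, while the other projection is automatically satisfied by the bimodule axioms of $(l_{A},r_{A},\beta_{1},\beta_{2},B)$ or $(l_{B},r_{B},\alpha_{1},\alpha_{2},A)$ obtained in the first step. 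This yields all six conditions, hence the totalized data form a matched pair of the associated BiHom-associative algebras.

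The main obstacle is bookkeeping rather than conceptual: one must check that the sum of the three component operations on $A\oplus B$ really telescopes to the displayed bicrossed-product formula (routine from the defining formulas of $\dashv,\vdash,\cdot$ on $A\oplus B$), and one must keep precise track of the twisting maps $\alpha_{i}$ and $\beta_{i}$ acting on the left factor of the BiHom-associativity identity, so that the twists land on the correct arguments in \eqref{3}--\eqref{8}. A fully computational alternative bypasses $A\oplus B$ entirely: each of \eqref{3}--\eqref{8} arises by summing a specific subfamily of \eqref{bieq201}--\eqref{bieq242} and then collapsing the result via $l_{A}=l_{\dashv_{A}}+l_{\vdash_{A}}+l_{\cdot_{A}}$, $r_{A}=r_{\dashv_{A}}+r_{\vdash_{A}}+r_{\cdot_{A}}$ (and the $B$-analogues) together with $a\ast_{B}b=a\dashv_{B}b+a\vdash_{B}b+a\cdot_{B}b$; there the delicate point is confirming that each of the nine mixed terms of the form ``$\dashv/\vdash/\cdot$ composed with $\dashv/\vdash/\cdot$'' occurs exactly once on each side, so that nothing spurious survives the summation.
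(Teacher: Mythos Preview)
Your proof is correct. The paper's own argument is much terser: after invoking Proposition~\ref{propa} for the bimodule structures (exactly as you do), it simply asserts that \eqref{3}--\eqref{5} follow from \eqref{bieq201}--\eqref{bieq221} and \eqref{6}--\eqref{8} from \eqref{bieq222}--\eqref{bieq242}, i.e.\ it takes the purely computational route you describe at the end as the ``fully computational alternative'' of summing the matched-pair identities and collapsing via $l_{A}=l_{\dashv_{A}}+l_{\vdash_{A}}+l_{\cdot_{A}}$, etc. Your primary route through the bicrossed product on $A\oplus B$ is a genuine (and cleaner) alternative: by first building the BiHom-tridendriform structure on $A\oplus B$, applying Theorem~\ref{tp} to get BiHom-associativity of $\star$, and then recognizing $\star$ as the candidate bicrossed product of Theorem~\ref{matched ass}, you sidestep the need to identify exactly which subfamilies of \eqref{bieq201}--\eqref{bieq242} telescope to each of \eqref{3}--\eqref{8}. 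The cost is that you must argue the converse direction of Theorem~\ref{matched ass} (extracting \eqref{3}--\eqref{8} from associativity of the bicrossed product by specialization), which you do correctly; the paper's summation argument avoids this but is pure bookkeeping.
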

\begin{proof}
Let $(A, B, l_{\dashv_{A}}, r_{\dashv_{A}}, l_{\vdash_{A}}, r_{\vdash_{A}},l_{\cdot_{A}}, r_{\cdot_{A}}, \beta_1,\beta_2,
 l_{\dashv_{B}}, r_{\dashv_{B}}, l_{\vdash_{B}}, r_{\vdash_{B}}, l_{\cdot_{B}}, r_{\cdot_{B}},\alpha_1,\alpha_2)$ be a matched pair of a BiHom-tridendriform algebras
 $$(A, \dashv_{A}, \vdash_{A}, \cdot_A,\alpha_1,\alpha_2)\quad \mbox{and}\quad (B, \dashv_{B}, \vdash_{B},\cdot_B,\beta_1,\beta_2).$$
 In view of Proposition \ref{propa}, the linear maps
 $l_{\dashv_{A}} + l_{\vdash_{A}}+l_{\cdot_A}, r_{\dashv_{A}} + r_{\vdash_{A}}+r_{\cdot_A}:A\rightarrow gl(B)$ and $l_{\dashv_{B}} + l_{\vdash_{B}}+l_{\cdot_B},  r_{\dashv_{B}} + r_{\vdash_{B}}+r_{\cdot_B}:B\rightarrow gl(A)$
 are bimodules of the underlying BiHom-associative algebras $(A,\ast_A, \alpha_1,\alpha_2)$ and $(B,\ast_B,\beta_1,\beta_2)$, respectively. Thus, \eqref{3}-\eqref{5} are equivalent to \eqref{bieq201}-\eqref{bieq221}, and
 \eqref{6}-\eqref{8} are equivalent to \eqref{bieq222}-\eqref{bieq242}.
\end{proof}

%\section*{Acknowledgments}

\end{document}